\documentclass[11pt]{article}
\usepackage[T1]{fontenc}
\usepackage{lmodern}
\usepackage[margin=1in]{geometry}
\usepackage{microtype}
\usepackage{amsmath,amssymb,amsthm,mathtools,mathrsfs}
\usepackage{enumitem}
\usepackage{xcolor}

\usepackage{graphicx}
\usepackage[hidelinks]{hyperref}
\numberwithin{equation}{section}
\newtheorem{theorem}{Theorem}[section]
\newtheorem{proposition}[theorem]{Proposition}
\newtheorem{lemma}[theorem]{Lemma}
\newtheorem{corollary}[theorem]{Corollary}
\theoremstyle{definition}
\newtheorem{definition}[theorem]{Definition}

\theoremstyle{remark}
\newtheorem{remark}[theorem]{Remark}
\newcommand{\cA}{\mathcal{A}}
\newcommand{\cC}{\mathcal{C}}
\newcommand{\cD}{\mathcal{D}}
\newcommand{\cE}{\mathcal{E}}
\newcommand{\cF}{\mathcal{F}}
\newcommand{\cL}{\mathcal{L}}
\newcommand{\cM}{\mathcal{M}}
\newcommand{\cN}{\mathcal{N}}
\newcommand{\cQ}{\mathcal{Q}}
\newcommand{\cR}{\mathcal{R}}
\newcommand{\cS}{\mathcal{S}}
\newcommand{\cT}{\mathcal{T}}
\newcommand{\cV}{\mathcal{V}}
\newcommand{\wh}{\widehat}
\newcommand{\wt}{\widetilde}
\newcommand{\ol}{\overline}
\newcommand{\D}{\mathbb D}
\newcommand{\R}{\mathbb R}
\newcommand{\C}{\mathbb C}
\renewcommand{\P}{\mathbb P}
\newcommand{\E}{\mathbb E}

\newcommand{\dd}{\,d}
\newcommand{\Smeas}{\mathsf S}
\newcommand{\Ameas}{\mathsf A}

\newcommand{\diskone}{{\mathsf D_{\scriptscriptstyle\bullet}}}

\DeclareMathOperator{\Var}{Var}

\DeclareMathOperator{\sgn}{sgn}
\DeclareMathOperator{\PV}{PV}

\newcommand{\Hol}{\operatorname{Hol}}
\newcommand{\q}{q_*}
\renewcommand{\Re}{\operatorname{Re}}
\renewcommand{\Im}{\operatorname{Im}}

\title{JT gravity as a conformal scaling limit \\ of random flat combinatorial disks}
\author{Morris Ang\thanks{University of California San Diego}}
\date{}
\begin{document}
\maketitle

\begin{abstract}
	We prove that flat Jackiw--Teitelboim (JT) gravity arises as a conformal scaling limit of random flat combinatorial disks. We give a clean criterion, formulated in terms of branching explorations, for a discrete model to converge to this conjecturally universal limit. For simplicity, we apply this criterion to a Boltzmann ensemble of disks with rectilinear boundary, defined by summing over combinatorial types and integrating over lengths. As the length of a distinguished side tends to infinity, the rescaled surface converges in law to a continuum random surface prescribed by Ferrari's path-integral formulation of JT gravity.
	Convergence is in the topology of smooth convergence of the conformal factors on compact subsets of the interior when uniformizing in a fixed domain; the limiting conformal factor is described by a harmonic Gaussian field closely related to holomorphic multiplicative chaos.

	The crux is a branching exploration of flat JT gravity whose lengths evolve as conditioned Cauchy processes and whose unexplored components are conditionally independent. We construct this exploration using a conformal welding theorem proved in a companion paper, and show that it is the scaling limit of Budd’s row-by-row exploration of the discrete disks, analogous to the peeling of random planar maps. Under a tightness condition, our criterion upgrades this convergence to a conformal scaling limit for the discrete disks. Other natural combinatorial disk models admit analogous first-row recursions, providing a route to their scaling limits.
\end{abstract}

\tableofcontents

\section{Introduction}
\label{sec-intro}

A \emph{rectilinear disk} is an oriented closed topological disk equipped with a Riemannian metric which is flat on the interior, whose boundary comprises finitely many geodesic segments meeting at interior angles $\pi/2$ or $3\pi/2$. We call these
\emph{convex} and \emph{concave corners} respectively, and distinguish a side with convex endpoints as the \emph{base}. These disks admit isometric immersions into the plane, which
may overlap themselves as in Figure~\ref{fig-boltzmann-disks}.

We study a model of random rectilinear disks based on Budd's work \cite{budd-flat-disks}, where each rectilinear disk $\cS$ is assigned the weight $w(\cS) = (4\pi)^{-c(\cS)/2} e^{-r(\cS)/2}$. Here, $c(\cS)$ is the number of corners of $\cS$, and $r(\cS)$ is the perimeter of $\cS$ excluding the base. These are combinatorial disks with continuous lengths:  each disk can be subdivided into rectangles by drawing horizontal and vertical rays inward from each concave corner, and is determined by the combinatorial type of this subdivision together with the side lengths. The law of the \emph{Boltzmann rectilinear disk} is obtained by summing over combinatorial types and integrating over these lengths with weight $w$. It is critical in that its partition function is finite but would diverge if the corner weight $(4\pi)^{-1/2}$ were increased.   

\begin{figure}[ht!]
	\begin{center}
		\includegraphics[scale=0.235]{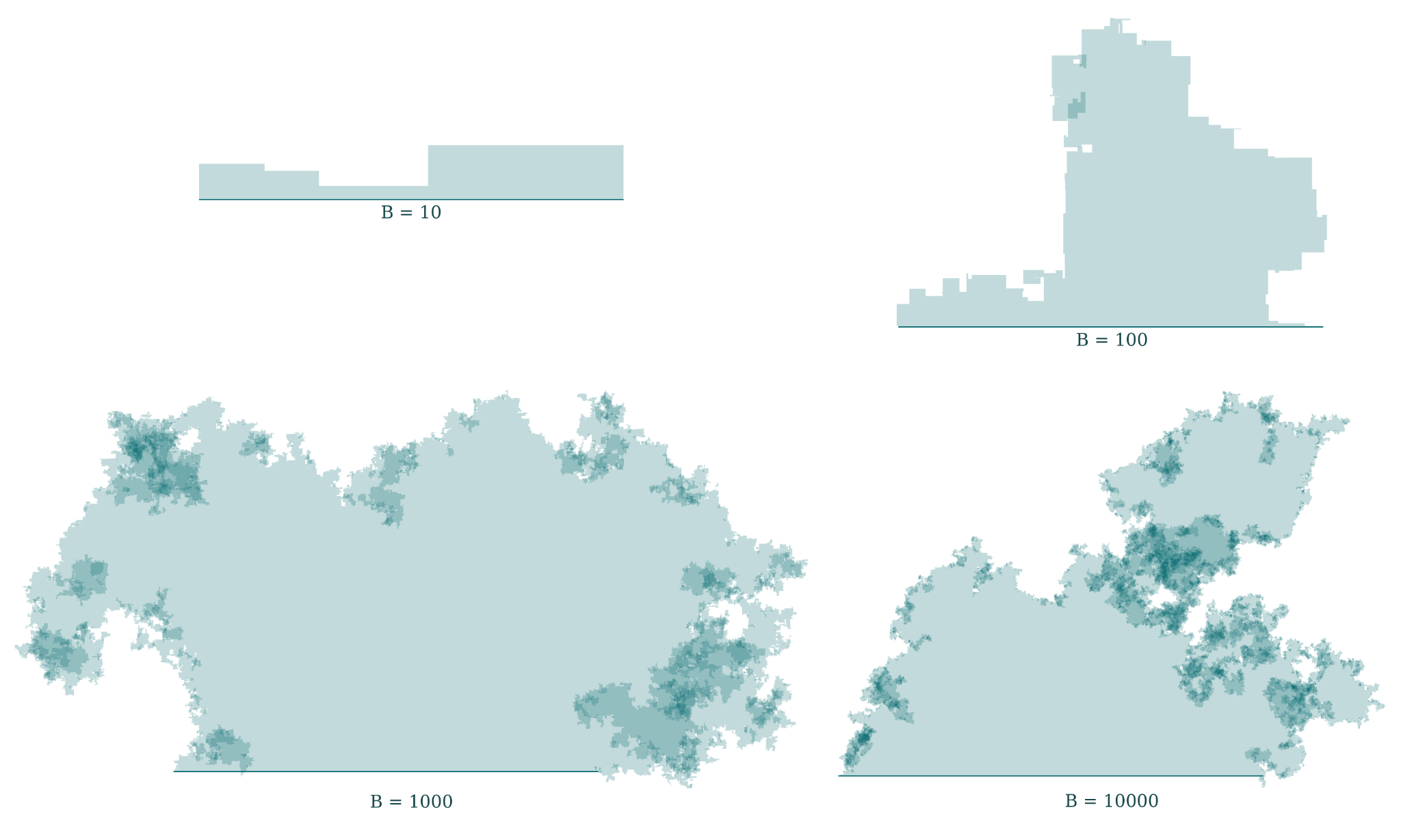}
	\end{center}
	\caption{\label{fig-boltzmann-disks} Simulations of Boltzmann rectilinear disks with base length $B$. These surfaces are isometrically immersed in the plane, and areas with darker shades depict overlaps.} 
\end{figure}

Ferrari \cite{ferrari-jt,ferrari-lattice} proposed that suitable models
of random discrete flat disks should converge to a continuum random surface
described by \emph{flat Jackiw--Teitelboim (JT) gravity}, whose random flat metric is described by the exponential of a harmonic Gaussian field (flatness is equivalent to harmonicity of the conformal factor). We consider a  variant of this surface which has a straight boundary segment: equip $\D_+ = \{ z = re^{i \theta} \: : \: 0 < r < 1  \text{ and } 0 < \theta < \pi\}$ with the Riemannian metric  \begin{equation}\label{eq-jt-metric}
	\frac{\exp\!\left(2\sum_{n\ge1}\frac1{\sqrt n} G_n r^n\cos(n\theta)\right)}
	{|1-z^2|}\,|dz|^2,
	\qquad G_n\overset{\mathrm{i.i.d.}}{\sim}N(0,1),
\end{equation}
distinguish $[-1,1]$ as its base, and rescale all distances so the base has length one. Denote the resulting surface by $\cS$. See Figure~\ref{fig-jt-sectors}.

\begin{figure}[ht!]
	\begin{center}
		\includegraphics[scale=0.23]{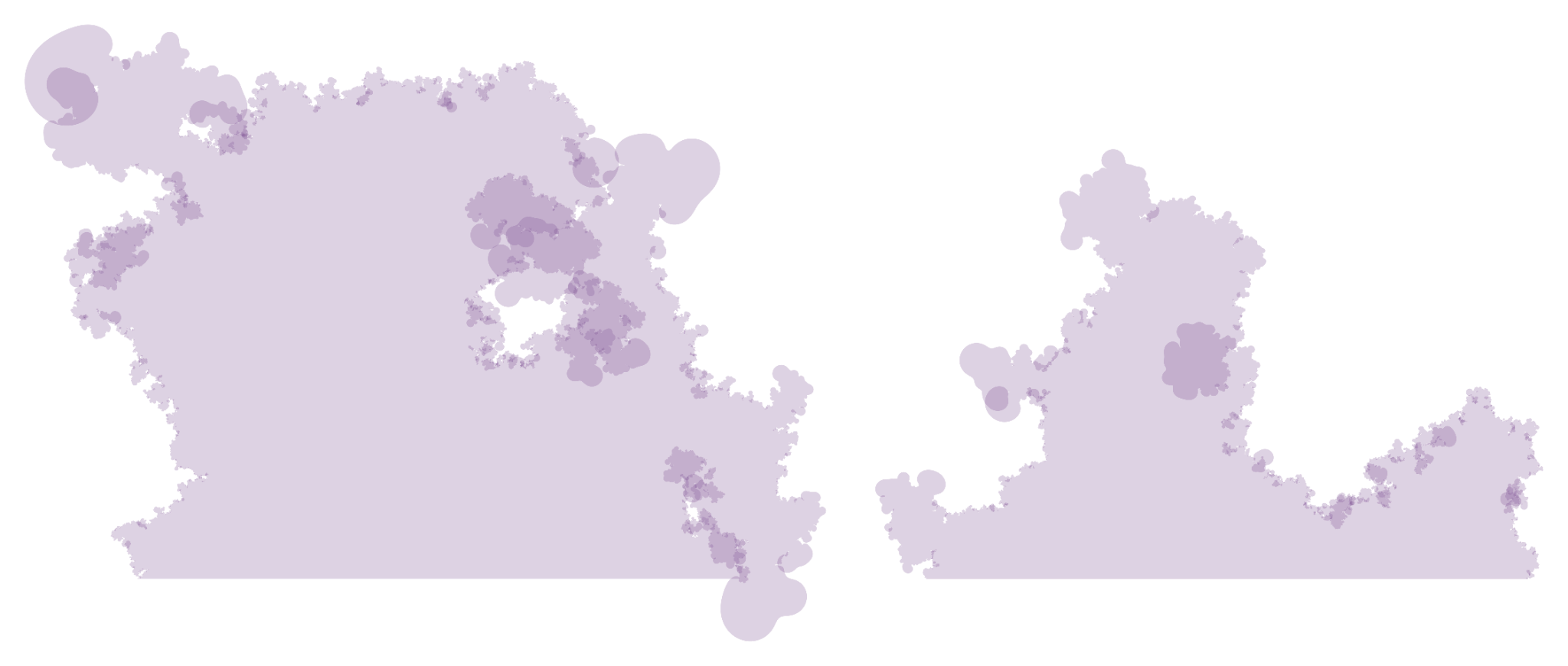}
	\end{center}
	\caption{\label{fig-jt-sectors} Two simulations of angle-$\pi$ JT sectors (Definition~\ref{def-jt-sector}) isometrically immersed in the plane. Darker shades depict overlaps. The Gaussian series is truncated at $10^7$ terms.} 
\end{figure}

 Theorem~\ref{thm-main} shows that as $B \to \infty$, the critical Boltzmann rectilinear disk $\cS_B$ with base length $B$ converges in distribution after rescaling to the JT gravity surface  $\cS$.
More precisely, uniformize $\cS_B$ and $\cS$ in
$\D_+$, sending each base to $[-1,1]$ and its midpoint to $0$.
Writing their metrics as $e^{2\sigma_B(z)}|\dd z|^2$ and
$e^{2\sigma(z)}|\dd z|^2$, respectively, we have 
\[
\sigma_B-\log B \Longrightarrow \sigma
\qquad\text{as }B\to\infty,
\]
in the topology of uniform convergence on compact subsets of $\D_+\cup(-1,1)$.

More generally, Theorem~\ref{thm-exploration-criterion} provides a clean criterion where one must check three properties of an exploration of a discrete model: (a) a Markov property, (b) tightness, (c) joint convergence of a length process and associated data, with the limiting length process given by a Doob $h$-transform of a pair of independent Cauchy processes. Given these, the scaling limit of the discrete model is flat JT gravity. We expect that these properties hold for a large family of discrete flat disks (e.g., flat triangulations or quadrangulations), so Theorem~\ref{thm-exploration-criterion} should provide a road to establishing  flat JT gravity as the universal scaling limit of discrete flat disks. 

There is a parallel story in the setting of \emph{random planar maps}, which are discrete random surfaces without a flatness condition imposed. Le Gall \cite{le-gall-brownian-map} and Miermont \cite{miermont-brownian-map} independently proved that certain classes of uniform random planar maps, viewed as metric measure spaces, converge in distribution after rescaling to the \emph{Brownian map} \cite{mm-brownian-map}. Subsequent work of many authors established convergence to the same limit for many other classes of planar maps, demonstrating universality of the Brownian map; see e.g.\ \cite{le-gall-brownian-geometry} for a survey. Miller and Sheffield later identified the Brownian map with the metric measure space associated to $\sqrt{8/3}$-Liouville quantum gravity (LQG), i.e., the random geometry associated to the formal metric ``$e^{\sqrt{8/3} h(z)}|\dd z|^2 $'' where $h$ is a Gaussian free field \cite{lqg-tbm1,lqg-tbm2}. Uniform triangulations \cite{hs-cardy-embedding} and spanning-tree-weighted
planar maps \cite{holden-tutte-embedding,hy-conformal-embeddings} were later shown to converge to LQG under conformal embeddings.
These results connect natural discrete models with a continuum limit predicted by the two-dimensional quantum gravity literature. Our work establishes an analogous correspondence for Boltzmann rectilinear disks, whose continuum limit is defined using the \emph{harmonic part} of a Gaussian free field.

Our proof depends crucially on an input from our companion paper \cite{ang-jt-zipper}: cutting a JT gravity surface along a flow line yields surfaces which are  conditionally independent given the length of the cut. 
This is inspired by Sheffield's quantum zipper
\cite{shef-zipper} and its extensions \cite{wedges, ahs-disk-welding}, which give analogous decomposition results
for Liouville quantum gravity surfaces cut by independent
Schramm--Loewner evolution curves.
In the JT gravity setting, however, the cut is smooth and measurable with respect to the surface.
Using this cutting identity, we construct a branching exploration
of $\cS$ whose lengths evolve as conditioned Cauchy
processes and whose unexplored regions are conditionally
independent given their base lengths. This continuum exploration is the
counterpart of the row-by-row exploration of discrete surfaces introduced by Budd \cite{budd-flat-disks}.
 Comparing discrete and continuum  branching explorations yields Theorem~\ref{thm-exploration-criterion}.

\subsection{Boltzmann rectilinear disks and their scaling limit}\label{subsec-boltzmann-disks}

Any rectilinear disk can be decomposed by drawing from each concave corner a horizontal and a vertical ray into the surface, stopped when they hit the boundary; cutting along all these rays gives a collection of rectangles. These rectangles can be viewed as the faces of a planar map. This is a quadrangulation with the disk topology and with simple boundary, such that interior vertices have degree exactly four (hence are \emph{flat}), and boundary vertices have degrees in $\{ 2, 3, 4\}$ (corresponding respectively to convex corners, straight boundary vertices, concave corners). Given this planar map and the positive real lengths associated to the sides, one can recover the rectilinear disk.

Following Budd's description of rectilinear disks and their moduli space \cite[Section~1.3]{budd-flat-disks}, we define a Lebesgue measure on the space of rectilinear disks. Generically, each ray emanating from a concave corner terminates at a straight boundary vertex. This is precisely the condition characterizing the \emph{rigid quadrangulations} introduced in \cite{budd-flat-disks}. For a rigid quadrangulation $\cR$ with $c$ corners and a distinguished side with convex endpoints called its \emph{base}, the rectilinear disks having combinatorial type $\cR$ are parametrized by the $c-2$ widths of its rows and columns. The pushforward of Lebesgue measure on $(0,\infty)^{c-2}$ to the space of rectilinear disks under this map yields a measure $\lambda_{\cR}$. The Lebesgue measure on the space of rectilinear disks is then $\lambda = \sum_{\cR} \lambda_{\cR}$ where the sum is taken over the set of all rigid quadrangulations. Disintegrate with respect to base length, so  $\lambda = \int_0^\infty \lambda^b \dd b$ where each measure $\lambda^b$ is supported on the set of rectilinear disks with base length $b$.

Recall that $c(\cS)$ is the number of corners of
$\cS$ and $r(\cS)$ is its perimeter excluding
the base. For $b>0$, define 
\[
\q=(4\pi)^{-1/2},\qquad 
Z(b)=\int \q^{c(\cS)}e^{-r(\cS)/2}
\,\lambda^b(\dd\cS).
\]
The partition function $Z(b)$ is finite by Lemma~\ref{lem-partition}.
We define a
\emph{Boltzmann rectilinear disk} with base length $b$
to be a random rectilinear disk with law
\begin{equation}\label{eq-boltzmann-disk-law}
	Z(b)^{-1}\q^{c(\cS)}e^{-r(\cS)/2}
	\,\lambda^b(\dd\cS).
\end{equation}


To construct the limiting surface, 
equip $\D_+$ with the Riemannian metric described in \eqref{eq-jt-metric}. Let $[-1,1]$ be its base, and rescale the surface so it has base length 1. 
Denote the resulting surface by $\cS$.

\begin{theorem}\label{thm-main}
	Let $\cS_B$ be a Boltzmann rectilinear disk with base length $B$, and let $\cS$ be sampled as above. Conformally parametrize $\cS_B$ and $\cS$ in $\D_+$ so their bases are sent to $[-1,1]$ with midpoints sent to $0$. In these coordinates, write their metrics as
	$e^{2\sigma_{\cS_B}(z)}|\dd z|^2$ and
	$e^{2\sigma_{\cS}(z)}|\dd z|^2$.
Then as $B\to\infty$, 
	\begin{equation}\label{eq-main}
		\sigma_{\cS_B} - \log B \ \Longrightarrow\ \sigma_{\cS}
	\end{equation}
	with respect to uniform convergence on compact subsets of $\D_+ \cup (-1, 1)$. 
\end{theorem}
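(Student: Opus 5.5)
The plan is to deduce Theorem~\ref{thm-main} from the general criterion, Theorem~\ref{thm-exploration-criterion}, applied to Budd's row-by-row exploration of the Boltzmann rectilinear disk $\cS_B$. That criterion requires three inputs: (a) a Markov property for the discrete exploration; (b) tightness of $\sigma_{\cS_B}-\log B$ on compact subsets of $\D_+\cup(-1,1)$; and (c) joint convergence of the rescaled length process, together with the associated branching data, to the Doob $h$-transform of a pair of independent Cauchy processes. That limit is the length process of the continuum branching exploration of $\cS$ constructed from the conformal welding theorem of \cite{ang-jt-zipper}. I will verify (a), (c), (b) in that order.

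\textbf{Step (a): Markov property.} Explore $\cS_B$ by peeling off the first row of rectangles adjacent to the base. The weight $\q^{c(\cS)}e^{-r(\cS)/2}$ factorizes over the pieces, and Lebesgue measure $\lambda$ decomposes as a product over the pieces' side lengths. Hence, conditionally on the explored first row, the unexplored components are independent Boltzmann rectilinear disks with the induced base lengths. Iterating gives a branching Markov exploration. The new bases are the horizontal segments left after removing the row, and the weight lost is the vertical perimeter of the row together with its corners. This is the first-row recursion for $Z(b)$, which I expect Lemma~\ref{lem-partition} to solve explicitly.

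\textbf{Step (c): convergence of lengths.} From the explicit $Z(b)$ (Budd's computation gives $Z(b)$ decaying like a power times $e^{-b/2}$ or similar), the one-step transition of the base length is a Boltzmann-weighted jump kernel. The left and right endpoints of the base each move by increments that are heavy-tailed with index $1$: a concave corner gives a jump whose law has tail $\asymp \q^2/x^2$, and criticality $\q=(4\pi)^{-1/2}$ is exactly the condition that removes drift. So after rescaling space by $B$ and time appropriately, the unconditioned endpoint increments converge to independent symmetric Cauchy processes. The factor $Z(\text{new length})/Z(\text{old length})$ is precisely the discrete $h$-transform, and its asymptotics converge to the continuum harmonic function $h$. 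Standard convergence of random walks in the domain of attraction of a stable law, combined with local uniform convergence of the $h$-transform, gives convergence of the length process up to the branching times. At a branching time the base splits, and the split lengths converge jointly with the process. The recursion over subtrees then handles the full branching structure, truncated to finitely many macroscopic components.

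\textbf{Step (b): tightness.} I expect this to be the main obstacle. We need a priori control of the conformal factor of $\cS_B$ on compacts. The approach is to use the exploration to show that, with high probability, a compact set $K\subset\D_+\cup(-1,1)$ is contained in a region assembled from finitely many macroscopic explored rows. On those rows the conformal map is controlled by the boundary lengths, since the boundary is rectilinear with $O(1)$ macroscopic segments at scale $B$. Harmonicity of $\sigma_{\cS_B}$ then upgrades boundary control to interior $C^k$ bounds via Harnack and gradient estimates. The key estimates are (i) no microscopic pinching near the base, which follows from the Cauchy tail bounds on the lengths, and (ii) the number of corners within macroscopic distance of $K$ being tight. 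Once (a), (b) and (c) are established, Theorem~\ref{thm-exploration-criterion} identifies every subsequential limit with the law of $\sigma_{\cS}$, which gives \eqref{eq-main}.
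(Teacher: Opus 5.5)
Your overall architecture matches the paper: Theorem~\ref{thm-main} is Theorem~\ref{thm-exploration-criterion} applied to $\mathsf Q_b^{(n)}=\P_b^{(B_n)}$, so that $\sigma_{\cS_B}-\log B$ is the conformal factor of a sample of $\P_1^{(B)}$. The branching Markov property comes from the factorization in Proposition~\ref{prop-kernel}, and hypothesis (i) from the $h$-transform structure (Proposition~\ref{prop-spine}). The genuine gap is your Step (b).

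First, you have misidentified the hypothesis. Condition (ii) asks for tightness of $W_{\max}^{(n)}$, the supremum of lengths of interior horizontal segments of the rescaled disk. It does not ask for tightness of $\sigma_{\cS_B}-\log B$ on compacts. Control of $\sigma$ is essentially an \emph{output} of the criterion. Bloch's theorem gives $(1-|z|^2)|(F^{(n)})'(z)|\le CW_{\max}^{(n)}$ (Proposition~\ref{prop-bloch-width}). The lower bound comes from Schwarz's lemma applied to the uniformized explored regions, whose developing maps converge. Local bounds on $\sigma$ do not bound the global quantity $W_{\max}^{(n)}$, so you cannot invoke the criterion as stated. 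Its proof would survive if you had tightness of $F^{(n)}$ in $\Hol(\D)$ directly.

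More seriously, your sketch for controlling $\sigma$ rests on false premises. After rescaling, the endpoint jumps have L\'evy measure $B^2k(B|z|)\dd z$, with total rate of order $B$. So there are of order $B$ corners per unit height, almost all at scale $1/B$. Consequently:
\begin{itemize}
\item the boundary is not made of $O(1)$ macroscopic segments;
\item the number of corners near a compact set is not tight;
\item $\sigma$ tends to $\pm\infty$ at every corner, so there is no boundary control for Harnack or gradient estimates to propagate.
\end{itemize}
Your idea that $K$ lies in finitely many explored pieces is also circular as stated. In the paper this is Lemma~\ref{lem-short}, which is only available once precompactness of the developing maps, with nonconstant limit, is known.

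The missing idea is a global width bound driven by criticality. Using the kernels of Proposition~\ref{prop-kernel} and $|M|=1/8$, the paper checks $\E_b^{(1)}\sum_i Z(b_i)^{-1}\le Z(b)^{-1}$ over the first-row children. Hence $\sum Z(b_i)^{-1}$ over unexplored children is a nonnegative supermartingale along the complete row-by-row exploration. Since $Z$ is decreasing, Doob's maximal inequality gives $\P(W_{\max}^{(n)}\ge R)\le Z(B_nR)/Z(B_n)\to R^{-2}$ (Lemmas~\ref{lem-width} and~\ref{lem-maximal-width-tightness}). This is where criticality really enters. It also makes the unconditioned jump kernel have total rate exactly $B$, so that the discrete length process is an exact $h_B$-transform. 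It does not ``remove drift''; drift is absent simply by symmetry of $k(|z|)$.

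Step (c) also needs four corrections:
\begin{itemize}
\item $Z(b)\sim 1/(2\pi b^2)$ is a pure power law. An $e^{-b/2}$ factor would destroy $h_B(\ell,r)=2\pi B^2Z(B(\ell+r))\to(\ell+r)^{-2}$.
\item The index-one tail $k(u)\sim 1/(\pi u^2)\sim 2Z(u)$ comes from integrating out the unexplored child, not from the corner weight $\q^2$.
\item Local uniform convergence of $h_B$ does not by itself control the reweighting at the exit point, which may land near the origin. The paper closes this with $\E D=\P(\tau_\delta<T)=1$ (Lemma~\ref{lem-tagged-paths}) together with Fatou and Scheff\'e.
\item Matching the child data requires showing that the horizontal boundary pieces absorbed into each jump are uniformly $o(1)$.
\end{itemize}
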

We prove Theorem~\ref{thm-main} by verifying the hypotheses of
the convergence criterion Theorem~\ref{thm-exploration-criterion} 
for Boltzmann rectilinear disks, using arguments strongly inspired by
\cite{bck-planar-maps, bbck-martingales, ads-growth-fragmentation}.

\subsection{Branching explorations and a general convergence criterion}\label{subsec-exploration-criterion}

Flat JT gravity is expected to be the universal scaling limit for discrete flat disks. For example, Ferrari predicts that models of flat disks tiled by squares, triangles, or hexagons yield the same continuum theory \cite[Sections~I,D and~II,B]{ferrari-lattice}. Theorem~\ref{thm-exploration-criterion} provides a route to establishing such universality: for models with a branching Markov property, convergence of the data of a single vertical exploration and tightness of maximal horizontal widths imply conformal convergence to JT gravity.

For Boltzmann rectilinear disks, the exploration comes from Budd's
row-by-row decomposition, analogous to peeling random planar maps \cite{budd-flat-disks}. 
The central new ingredient is a branching exploration of JT gravity, which we construct and identify as the scaling limit of the discrete exploration.

We now formulate the criterion precisely.
A \emph{based flat polygonal disk} is an oriented closed
topological disk equipped with a Riemannian metric that is flat
in the interior, with piecewise geodesic boundary having finitely
many vertices, all of which have angle less than $2\pi$, and a
distinguished side called its \emph{base}. We identify disks under
isometries preserving the orientation and the base.

Let $\cS$ be a based flat polygonal disk with base length $b$, and fix
$0<\delta<b$. Choose a uniform point on its base and follow the
inward vertical ray $(p_t)_{0\le t<T}$, parametrized by length,
where $T$ is its first boundary hitting time.

Through each $p_t$, extend a horizontal segment in both directions
until further straight extension would leave $\cS$.
The segment may pass through boundary vertices and run
along horizontal boundary sides, provided it continues straight
and remains in $\cS$. For each $t\in(0,T)$ let $(L_t,R_t)$
be the lengths of the horizontal segments to the left
and right of $p_t$, replace $(L_t,R_t)_{[0,T)}$ with its
right-continuous version, and set $L_T=R_T=0$.
Stop the exploration at the first time $\tau_\delta\leq T$ that
$L_{\tau_\delta}+R_{\tau_\delta}\leq\delta$.

For each $t\in[0,\tau_\delta]$, cut $\cS$ along the horizontal
segment through $p_t$. The resulting pieces whose interiors
do not meet the stopped ray $\{p_s:0<s<\tau_\delta\}$ are called the
\emph{children} born at time $t$. Each child is regarded as a
based flat polygonal disk with its base along the horizontal
cut, and the \emph{explored region} is the closure in $\cS$
of the complement of the union of all children.
The adjacency graph of the children and the explored region,
with an edge for each attachment interval, is a tree, so each
child is connected to the explored region by a unique chain.

We represent each child by a triple $(I,J,\varepsilon)$:
view the starting point of the exploration as the origin of
the complex plane and the vertical ray as a subset of the
positive imaginary axis. Let $J\subset\C$ be the full
base of the child, and let $\varepsilon$ be $+1$ if the child
locally lies above its base and $-1$ otherwise.
Let $I\subseteq J$ be its attachment interval to the neighboring
piece on the unique chain of attachments leading to the explored
region. This neighboring piece may be the explored region itself
or another child.
Let $\cC_\delta$ be the unordered collection of triples
	$(I,J,\varepsilon)$ for children whose chain to the explored
	region consists entirely of attachment intervals of length
	greater than $\delta$. See Figure~\ref{fig-exploration}.

\begin{figure}[ht!]
	\begin{center}
		\includegraphics[scale=0.75]{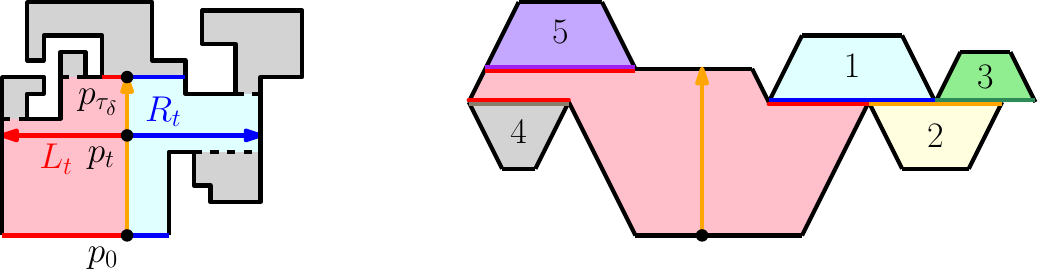}
	\end{center}
	\caption{\label{fig-exploration} 
		\textbf{Left:} The vertical ray $(p_t)_{[0,\tau_\delta]}$ (orange) is run until time $\tau_\delta$. From each point $p_t$, we draw out a pair of maximal horizontal segments (red, blue) and let their lengths be $L_t, R_t$. These horizontal segments sweep out the explored region (pink, light blue), and each connected component of the unexplored region is a child (grey).  \textbf{Right:} The explored region is in pink. For each child, we record its base $J$ (same color as the child) and the orientation $\varepsilon$ (here $\varepsilon_1 = \varepsilon_3 = \varepsilon_5 = 1$ and $\varepsilon_2 = \varepsilon_4 = -1$). The intervals $I_1, I_4, I_5$ are the respective red segments, whereas $I_2 = J_2 \cap J_1$ and $I_3 = J_3 \cap J_2$. If we had $|I_2| \leq \delta$ and $|I_k| > \delta$ for $k \neq 2$, then $\cC_\delta=\{ (I_k, J_k, \varepsilon_k) \::\: k = 1,4,5\}$. The data of Definition~\ref{def-vertical-exploration-data} describes the shape of the explored region via $(L_t, R_t)_{[0,\tau_\delta]}$, and records  information $\cC_\delta$ on all children that remain after discarding children behind $\delta$-short attachment intervals:
		 children 2 and 3 are both excluded because their attachment
		chains pass through $I_2$.} 
\end{figure}

\begin{definition}
	\label{def-vertical-exploration-data}
	The data of a vertical exploration stopped at cutoff $\delta$ is
	$\bigl((L,R)_{[0,\tau_\delta]},\cC_\delta\bigr)$.
	Convergence of data means $\tau_\delta^{(n)}\to\tau_\delta$,
	\[
	\bigl(
	L^{(n)}(\tau_\delta^{(n)}t),
	R^{(n)}(\tau_\delta^{(n)}t)
	\bigr)_{t\in[0,1]}
	\longrightarrow
	\bigl(L(\tau_\delta t),R(\tau_\delta t)\bigr)_{t\in[0,1]}
	\]
	in the joint Skorokhod $J_1$ topology,
	and an eventual bijection between $\cC_\delta^{(n)}$ and
	$\cC_\delta$ under which the triples converge.
\end{definition}

For $b>0$, let $\P_b$ be the law of the limiting
	surface $\cS$ from Theorem~\ref{thm-main} with all distances
	multiplied by $b$, so its base has length $b$.
	The analogous vertical exploration data for a sample from
	$\P_b$, stopped at cutoff $0<\delta<b$, are defined in
	Definition~\ref{def-jt-exploration-data}.
	Its length process is a Doob $h$-transform of a pair of independent
	standard symmetric Cauchy processes killed on leaving
	$(0,\infty)^2$, with $h(\ell,r)=(\ell+r)^{-2}$, and
	$\cC_\delta$ is a measurable function of the stopped
	length path.

A family $(\mathsf Q_b)_{b>0}$ of laws on based flat polygonal disks
of base length $b$ has the \emph{branching Markov property}
if for every $b>0$ and $0<\delta<b$, a sample from
	$\mathsf Q_b$ has the following property:
conditionally on the vertical exploration data
(Definition~\ref{def-vertical-exploration-data}), the children indexed by
$\cC_\delta$ are independent, each with law
$\mathsf Q_c$ where $c$ is the length of its base.

\begin{theorem}\label{thm-exploration-criterion}
	For each $n$, let $(\mathsf Q_b^{(n)})_{b>0}$ be a measurable
	family of laws on based flat polygonal disks of base length $b$,
	which satisfies the branching Markov property.
	Let $\cS^{(n)}\sim\mathsf Q_1^{(n)}$ and
		$\cS\sim\P_1$.
	Suppose that:
	\begin{enumerate}[label=\textup{(\roman*)}]
		\item Whenever $b_n\to b>0$ and $0<\delta<b$, we have
		$\bigl(
		(L^{(n)},R^{(n)})_{[0,\tau_\delta^{(n)}]},
		\cC_\delta^{(n)}
		\bigr)
		\Longrightarrow
		\bigl((L,R)_{[0,\tau_\delta]},\cC_\delta\bigr)$,
		where the tuples are the vertical exploration data of
		$\mathsf Q_{b_n}^{(n)}$
		(Definition~\ref{def-vertical-exploration-data}) and $\P_b$
		(Definition~\ref{def-jt-exploration-data}) stopped at cutoff $\delta$.
		\item The sequence $(W_{\max}^{(n)})$ is tight, where
		$W_{\max}^{(n)}$ is the supremum of the lengths of horizontal segments in the interior of 
		$\cS^{(n)}$ (with horizontal meaning parallel
		to the base).
	\end{enumerate}
	Conformally parametrize these surfaces in
	$\D_+$ so their bases are sent to $[-1,1]$ with midpoints
	sent to $0$.
	In these coordinates, write their metrics as
	$e^{2\sigma_{\cS^{(n)}}(z)}|\dd z|^2$ and
	$e^{2\sigma_{\cS}(z)}|\dd z|^2$, respectively.
	Then their conformal factors satisfy
	\[
	\sigma_{\cS^{(n)}}\Longrightarrow\sigma_{\cS}
	\]
	with respect to uniform convergence on compact subsets of
	$\D_+ \cup (-1,1)$.
\end{theorem}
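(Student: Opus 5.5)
We prove the theorem by iterating the vertical exploration. The aim is a coupling in which, with high probability, all but a negligibly small part of $\cS^{(n)}$ is close, as a flat surface, to the corresponding part of $\cS$. This is then converted into convergence of conformal factors using harmonicity.

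\textbf{Step 1 (iterated exploration and coupling).} Fix small $\delta>0$. Run the vertical exploration of $\cS^{(n)}$ and of $\cS$ with cutoff $\delta$. By (i) and Skorokhod's representation theorem, we may couple them so that the data converge almost surely. Then explore each child in $\cC_\delta^{(n)}$ (respectively $\cC_\delta$) by its own vertical exploration with the same cutoff, and continue for a fixed number $K$ of generations.

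The branching Markov property for $\mathsf Q^{(n)}$, together with the analogous property for $\P_b$ (the continuum exploration with conditionally independent children), lets us apply (i) conditionally at each generation. Since base lengths converge, $b_n\to b$, the hypothesis in the form ``whenever $b_n\to b$'' is exactly what makes this induction work. After $K$ generations we obtain an almost-sure coupling in which finitely many explored regions converge. Each explored region is determined by $(L,R)$ on its time interval as a union of horizontal segments attached along a vertical ray, so convergence in $J_1$ together with convergence of the triples $(I,J,\varepsilon)$ gives convergence of the glued flat polygonal regions $E^{(n)}_{\delta,K}\to E_{\delta,K}$. We can realise this convergence as bi-Lipschitz maps, close to isometries, between the interiors.

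\textbf{Step 2 (the unexplored remainder is small).} The complement of $E^{(n)}_{\delta,K}$ consists of two kinds of pieces:
\begin{itemize}[label=--]
\item children discarded behind attachment intervals of length at most $\delta$;
\item children present at generation $K$.
\end{itemize}
For the continuum, the length process is an $h$-transformed Cauchy process and the generations form a growth-fragmentation. We expect the total base length of generation-$K$ children to tend to $0$ as $K\to\infty$, for instance via a martingale such as $\sum_i |J_i|^{\theta}$ in the spirit of \cite{bbck-martingales}.

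For the discrete surfaces, hypothesis (ii) does the work. Every remaining piece is attached along a segment of length at most $\delta$, or has a small base. We want each such piece to have small diameter, so that it sits within a small neighbourhood of the boundary of the explored region. Tightness of $W^{(n)}_{\max}$ controls horizontal extent, and the small bases give uniform control via the branching property. This is the step I expect to be the main obstacle. Hypothesis (ii) only bounds horizontal widths globally, so it must be combined with the branching property and convergence to show that, uniformly in $n$, the pieces hidden behind short intervals cannot carry macroscopic conformal mass. In particular they must not change harmonic measure from the base as seen from compact subsets of $\D_+\cup(-1,1)$. I would first try a harmonic-measure, extremal-length estimate: a region attached through an opening of length at most $\delta$, inside a surface whose widths are tight, is separated from $0$ by a curve of small length relative to a fixed annulus. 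Its influence on the uniformization at fixed compact sets is then $o(1)$ as $\delta\to0$.

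\textbf{Step 3 (conformal convergence).} Given Step 2, the surfaces $\cS^{(n)}$ converge to $\cS$ in a Carathéodory-type sense: near-isometric embeddings of large subsurfaces containing the base, whose complements have vanishing extremal-length influence. Uniformizing in $\D_+$ with the base normalisation, the conformal maps converge locally uniformly on compact sets by Carathéodory kernel convergence. Composing with the near-isometries, the metrics converge locally uniformly, so $e^{\sigma_{\cS^{(n)}}}\to e^{\sigma_{\cS}}$ locally uniformly. Since the $\sigma$'s are harmonic in $\D_+$ and extend by reflection across $(-1,1)$, where the base is straight, locally uniform convergence of $\log$ of the densities on compact subsets of $\D_+\cup(-1,1)$ follows from the Harnack inequality and interior estimates. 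Finally, letting $K\to\infty$ and then $\delta\to0$ yields convergence in law.
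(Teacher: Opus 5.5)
\textbf{There is a genuine gap, in Step 2.} Your Steps 1 and 3 are roughly what is needed. Step 2, which you flag yourself, is the heart of the proof, and the mechanism you propose does not close it.

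A crosscut of flat length at most $\delta$ is conformally negligible only if the conformal factor does not degenerate near it. If $F^{(n)}$ is nearly constant on a continuum running from $\D_r$ to $\partial\D$, a flat-short crosscut can cut off a macroscopic part of the disk. So any harmonic-measure or extremal-length argument needs two uniform-in-$n$ inputs: an upper bound on $|(F^{(n)})'|$ extracted from (ii), and a nondegeneracy bound. You give neither. A naive modulus estimate around the neck would also need control of the covering multiplicity of the immersed surface. Your intermediate goal, that each discarded piece has small diameter, is not needed, and (ii) does not obviously give it.

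Two smaller points:
\begin{itemize}
\item $J_1$ convergence does not produce bi-Lipschitz near-isometries between explored regions whose boundaries have jumps. What one actually gets, and needs, is convergence of the uniformized developing maps $H^{(n)}_\delta\to H_\delta$ in $\Hol(\D)$.
\item With $\delta$ fixed, no limit $K\to\infty$ is needed. The continuum cutoff exploration terminates after finitely many steps a.s., by a quadratic supermartingale argument, and convergence of the data then forces $M^{(n)}=M$ eventually.
\end{itemize}

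\textbf{How the paper closes the gap.} It never controls the discarded pieces; it uses a compactness argument instead.
\begin{itemize}
\item \emph{Tightness.} Schwarz-reflect across the base. Apply Bloch's theorem to $F^{(n)}\circ\phi$, where $\phi$ is a disk automorphism sending $0$ to $z$. This gives a disk of radius $\gtrsim(1-|z|^2)|(F^{(n)})'(z)|$ covered injectively, hence a horizontal segment of comparable length. So $(1-|z|^2)|(F^{(n)})'(z)|\le CW^{(n)}_{\max}$, and with (ii) and Montel, $(F^{(n)})$ is tight in $\Hol(\D)$.
\item \emph{Nondegeneracy.} Let $\psi^{(n)}_\delta:\D\to E^{(n)}_\delta$ uniformize the doubled explored region. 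The Schwarz lemma gives $(F^{(n)})'(0)\ge(H^{(n)}_\delta)'(0)\to H_\delta'(0)>0$.
\item \emph{Discarded pieces are conformally negligible.} Take a subsequence with $F^{(n)}\to F_*$, which is nonconstant by the previous step. A purely topological lemma shows that crosscuts $\gamma$ with $\operatorname{diam}F^{(n)}(\gamma)\le\delta_j\to0$ eventually avoid each $\D_r$. Hence $\psi^{(n_j)}_{\delta_j}\to\mathrm{id}$, and likewise $\psi_{\delta_j}\to\mathrm{id}$ using $F$.
\item \emph{Identification.} Choose $n_j$ so that the laws of $H^{(n_j)}_{\delta_j}$ and $H_{\delta_j}$ are within $1/j$. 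This gives $F_*\overset{d}{=}F$, and $\sigma=\log|F'|$ finishes the proof.
\end{itemize}
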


\medskip
\noindent \textbf{Proof strategy.}
The crux is to construct and study a branching exploration
of JT gravity that is the continuum counterpart of Budd's
row-by-row exploration of discrete surfaces \cite{budd-flat-disks}.

\smallskip
\noindent\emph{Identifying the length process.}
The key input is the conformal welding identity Theorem~\ref{thm-sector-cutting} from \cite{ang-jt-zipper} for random flat surfaces
called \emph{JT sectors}: cutting an angle-$\pi$ JT sector along
its vertical flow line produces two angle-$\pi/2$ JT sectors
that are conditionally independent given the length of the cut. 
Combining this with a
rerooting symmetry of the angle-$\pi$ sector (Proposition~\ref{prop-reroot}), we show that after also cutting along the horizontal chord through $p_t$, the two pieces above the chord are angle-$\pi/2$ JT sectors. Since we can identify the angle-$\pi/2$ partition function with a Cauchy
excursion entrance density
(Proposition~\ref{prop-quarter-identification}), this yields
the length process of the vertical exploration
(Proposition~\ref{prop-retained}).
We also establish a strong Markov property for the
to-be-explored region (Proposition~\ref{prop-stopped}).

\smallskip
\noindent\emph{Constructing the branching exploration.}
To obtain a branching exploration, we must also identify
the conditional laws of the children created at jumps of the length process.
Rotation and rerooting symmetries (Proposition~\ref{prop-glued-symmetry}) allow us to view each child
as the to-be-explored region of another vertical exploration
started outside the child, 
and the strong Markov property then identifies its conditional
law given the rest of the surface (Lemma~\ref{lem-daughter-law}).
This yields conditional independence of the children
(Corollary~\ref{cor-daughter-history}), allowing the vertical exploration
to be iterated in each child. This gives the branching exploration. 
\smallskip 

Finally,  in Section~\ref{sec-developing-convergence},  we prove Theorem~\ref{thm-exploration-criterion} by reconstructing explored
regions from (i) and using tightness from (ii) to control their immersions in the plane.

\subsection{Related work}\label{subsec-related-work}
\noindent \textbf{Jackiw--Teitelboim gravity.}
JT gravity is a two-dimensional dilaton
gravity theory in which the dilaton imposes constant bulk curvature.
It has attracted considerable attention as a tractable setting for studying black holes and holography; see \cite{mt-jt-review} for a review.
Much of the literature concerns the negative-curvature theory
with asymptotically $\mathrm{AdS}_2$ boundary conditions (meaning the boundary is at infinite distance).
In this setting, 
the genus expansion of
the partition functions of pure JT gravity agrees with that
of a double-scaled Hermitian matrix integral \cite{sss-matrix-integral}.
The present work instead concerns the zero-curvature theory at finite
cutoff (meaning the boundary is at finite distance), describing random flat disks with fluctuating boundaries. Stanford and Yang \cite{sy-self-avoiding-loops} study flat JT gravity via
self-avoiding loops, whereas our discrete models and their scaling
limit allow self-intersections of the developed boundary.

\medskip
\noindent\textbf{Ferrari's formulation of flat JT gravity.}
Ferrari \cite{ferrari-jt,ferrari-lattice} proposed discrete and
continuum formulations of JT gravity.
In the flat case, he primarily studies disks tiled by squares,
with four squares incident to each interior vertex.
He proposes similar constructions using regular triangles or hexagons, and conjectures universality: these
discretizations should yield the same continuum theory
\cite[Sections~I\,D and~II\,B]{ferrari-lattice}.
He also constructs a matrix model encoding the enumeration of square-tiled disks \cite[Section~V]{ferrari-lattice}.

Ferrari's continuum path integral depends on the matter central
charge $c$.
The pure-gravity case $c=0$ is the conjectured scaling limit of
the discrete models above; the metric in \eqref{eq-jt-metric}
describes a version with a distinguished straight boundary segment.
Using this path integral, Ferrari \cite{ferrari-jt} predicts
critical exponents as functions of $c$; see also \cite{cf-one-loop} for an analysis of the
semiclassical limit $c\to-\infty$.
At $c=0$, these predictions motivate conjectures for the discrete
models: boundary dimension two and a power-law prefactor with  exponent $-3$
for the number of disks with a given perimeter
\cite[Conjectures~4.1--4.2]{ferrari-lattice}.

\medskip
\noindent \textbf{Budd's rigid quadrangulations.}
Motivated by Ferrari's proposal, Budd introduced rigid quadrangulations and established a bijection with colorful quadrangulations \cite{budd-flat-disks}.
The latter have no flatness constraint and have integer vertex labels that differ by one along each edge and take three distinct values around each face.
This bijection transfers enumeration results on colorful quadrangulations
\cite{bmep-eulerian-orientations} to rigid quadrangulations, in agreement with Ferrari's predicted  $-3$ exponent.
The bijection also identifies the row-by-row exploration of rigid
quadrangulations with a peeling exploration of colorful
quadrangulations. \cite{zonneveld-tree-bijection} gives a direct bijective proof
of the generating functions of rigid quadrangulations
by encoding a variant of the row-by-row decomposition
in decorated binary trees. \cite{budd-flat-disks} further suggests that these bijections may have a continuum
counterpart relating flat JT gravity to critical LQG with an independent
$\mathrm{CLE}_4$.

\medskip
\noindent \textbf{Branching explorations.}
Branching explorations of random planar maps via peeling and their
growth-fragmentation scaling limits were developed in
\cite{bck-planar-maps, bbck-martingales}.
A signed variant of one such growth-fragmentation process was realized
in \cite{ads-growth-fragmentation} by cutting a planar Brownian excursion at
horizontal levels and recording the signed horizontal displacements
of the resulting subexcursions.
The signed base lengths in our branching exploration of JT gravity, with sign corresponding to the orientation of the child, 
evolve according to  this growth-fragmentation.
See also \cite{da-silva-signed-growth-fragmentations} for the general theory of signed
growth-fragmentations, or \cite{bcr-markov-trees, chq-critical-markov-trees}
for the broader theory of self-similar Markov trees.
The same signed growth-fragmentation appears in the critical
mating-of-trees theorem \cite[Section~5.2.2]{ahps-critical-lqg} describing critical
LQG coupled with an independent conformal loop ensemble, consistent
with the connection suggested by \cite{budd-flat-disks}.

\medskip
\noindent \textbf{Holomorphic multiplicative chaos.}
As explained in \eqref{eq-developing-map}, a flat surface
$(\D,e^{2\sigma(z)}|\dd z|^2)$ admits a holomorphic
\emph{developing map} $F:\D\to\C$ satisfying
$|F'|^2=e^{2\sigma}$.
In our JT gravity setting, these maps are obtained by integrating
a half-disk variant of critical \emph{holomorphic multiplicative
	chaos} (HMC).

HMC is obtained by exponentiating a Gaussian holomorphic field
on $\D$ with logarithmic boundary correlations.
In our variant, this field has real Gaussian coefficients (hence has reflection symmetry across the real axis).
For the standard model with complex Gaussian coefficients,
Duplantier--Gayrard--Saksman
\cite[Theorem~1.2 and Proposition~1.4]{dgs-integral-means}
computed the almost sure integral means spectrum of the primitive
at criticality and proved its almost sure noninjectivity.

A systematic study of HMC as a random distribution on the circle,
including sharp results on its Sobolev regularity, was developed
in \cite{nps-hmc}.
HMC has close connections with the asymptotics of characteristic
polynomials of circular random matrix ensembles
\cite{hko-characteristic-polynomial,dg-magic-squares,sw-riemann-zeta,cn-gmc-beta}.
Further work on its coefficient moments and limiting distributions
includes \cite{sz-number-theory-chaos,npsv-fourier-coefficients,an-critical-hmc}.

\medskip
\noindent \textbf{Conformal welding of JT sectors.}
The companion paper \cite{ang-jt-zipper}\footnote{At the time of posting,
	the companion paper is being revised and is not yet publicly
	available. We expect to make it available shortly.}
introduces flat surfaces called \emph{JT sectors} (Definition~\ref{def-jt-sector}) and proves a decomposition
(Theorem~\ref{thm-sector-cutting}) inspired by Sheffield's quantum zipper
for Liouville quantum gravity \cite{shef-zipper, wedges, ahs-disk-welding}:
cutting an angle-$\pi$ JT sector along its vertical flow line
produces two angle-$\pi/2$ JT sectors that are conditionally
independent given the cut length.
This identity is central to the proofs of our theorems.

\subsection{Outline}\label{subsec-outline}

In Section~\ref{sec-discrete-exploration}, we compute the partition function of
Boltzmann rectilinear disks and describe their row-by-row and
vertical explorations.
Section~\ref{sec-jt-sectors} introduces JT sectors, states the conformal
welding input, and computes the partition functions for angles
$\pi$ and $\pi/2$.
In Section~\ref{sec-vertical-exploration}, we construct the vertical exploration of a
JT sector, identify its length process as a pair of conditioned Cauchy processes, and
prove its strong Markov property.
In Section~\ref{sec-branching}, we establish the branching Markov
property and construct the full branching exploration.
In Section~\ref{sec-discrete-convergence}, we verify the hypotheses of
Theorem~\ref{thm-exploration-criterion} for Boltzmann rectilinear disks,
thus proving Theorem~\ref{thm-main}.
We prove Theorem~\ref{thm-exploration-criterion} in
Section~\ref{sec-developing-convergence} by comparing discrete and continuum branching processes. 
Finally, Appendix~\ref{app-exploration-geometry} supplies geometric
results about horizontal chords and exploration jumps used
in the continuum construction.

\medskip
\noindent \textbf{AI Disclosure.}
The main mathematical ideas and overall proof strategies in
this paper and \cite{ang-jt-zipper} are due to the author. The only AI-generated argument that the author regards as a novel contribution is the proof of \cite[Proposition A.1]{ang-jt-zipper}, stated in this paper as Proposition~\ref{prop-boundary-continuity} (the author does not claim credit for discovering this proof). In this paper, GPT-6 Astra sped up the project through drafting, revision, and simulations, as well as literature searches for proof ingredients of Lemma~\ref{lem-partition}, Lemma~\ref{lem-cauchy-gram}, Proposition~\ref{prop-bloch-width}, and arguments for the proofs of  Lemma~\ref{lem-mellin},  Lemma~\ref{lem-tagged-paths}, Proposition~\ref{prop-all-time-bases}, Lemma~\ref{lem-actual-minimum},  Proposition~\ref{prop-binary-geometric}, and lemmas in the appendix;
from the author's perspective, these uses of AI for proof assistance were for routine arguments.  
The author has checked all AI-generated arguments and takes responsibility for their correctness.

\medskip
\noindent \textbf{Acknowledgements.}
The author thanks Timothy Budd, Nicolas Curien, Frank Ferrari,
Janne Junnila, and Eero Saksman for helpful conversations. In particular, Curien pointed out that the GPT-6-generated proof of  Proposition~\ref{prop-boundary-continuity} (to be included in \cite{ang-jt-zipper}) draws
heavily on the argument of~\cite{ahs-discounted-tree-sums}. While finishing this work, the author learned of concurrent independent work of Budd and Curien focusing on scaling limits from the metric space perspective. We are considering writing a joint paper revising and expanding the present work.

The author also thanks the organizers of the program
``Random Geometry in Math and Physics'' at the Simons Center
for Geometry and Physics and the New York satellite meeting
of ``Probabilistic Paths to QFT,'' both held in 2026,
where he learned about the problems studied in this paper.
He is grateful to the Erwin Schr\"odinger Institute for Mathematics and Physics for its hospitality during the
program ``Statistical Mechanics and Combinatorics of Discrete
Planar Structures,'' where much of this paper was written.

The author was partially supported by NSF grant DMS-2348201
and a start-up grant from the University of California San Diego.

\section{Discrete branching exploration}
\label{sec-discrete-exploration}

For $B,b>0$, let $\P_b^{(B)}$ be the law obtained by
sampling a Boltzmann rectilinear disk with base length $Bb$ and dividing
all its lengths by $B$. Thus a sample from $\P_b^{(B)}$ has base
length $b$, and $\P_b^{(1)}$ is the unscaled law.

In Section~\ref{subsec-discrete-partition-function} we compute the {partition} function, building on \cite{budd-flat-disks,bmep-eulerian-orientations}. 
In Section~\ref{subsec-row-by-row} we work with $B=1$, and discuss the row-by-row exploration; this is a branching exploration of the Boltzmann rectilinear disk, analogous to the peeling exploration of random planar maps. It was first introduced by Budd \cite{budd-flat-disks} to study the closely-related model of rigid quadrangulations. In Section~\ref{subsec-discrete-vertical} we examine a single branch by iterating the row-by-row exploration, and show that a conditioned variant of a pair of independent compount Poisson processes arises naturally. The crux of this paper is to show the limiting conditioned Cauchy processes arise in the continuum JT gravity surface (Section~\ref{sec-vertical-exploration}).

\subsection{The partition function} \label{subsec-discrete-partition-function}
We now compute the partition function of the Boltzmann rectilinear disk.
\begin{lemma}\label{lem-partition}
	The partition function in \eqref{eq-boltzmann-disk-law} satisfies
	\begin{equation}\label{eq-partition}
		Z(b)=\frac1{8\pi}\int_0^1
		s\sqrt{\frac{1-s}{1+s}}e^{-bs/2}\dd s,
		\qquad Z(b)\sim\frac1{2\pi b^2}
		\quad\text{as }b\to\infty.
	\end{equation}
	Moreover,
	$\int_0^\infty Z(b)\dd b=\frac18-{\q^2}$.
\end{lemma}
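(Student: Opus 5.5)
Our plan is to derive a first-row (row-by-row) recursion for the Laplace transform of $Z$, solve it explicitly, and then read off the asymptotics and the integral.

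\emph{Step 1: the recursion.} Peel off the row adjacent to the base. This row is a rectangle of width $b$ and some height $h>0$. Its top side either is a boundary side (the disk is a single rectangle, with two extra corners besides the base corners) or is cut by the rays from concave corners into pieces. Each piece is the base of a child disk sitting above it, and consecutive children are separated by boundary segments. Integrating out $h$ with weight $e^{-h}$ (two vertical sides each contribute $e^{-h/2}$) produces a factor $1$. Summing over the configurations of the top side gives a convolution-type identity for $Z$. In Laplace variables $\hat Z(\mu)=\int_0^\infty e^{-\mu b}Z(b)\dd b$, this becomes an algebraic (quadratic, catalytic-type) equation: horizontal boundary pieces on top contribute weight $e^{-x/2}$, corners contribute $\q$, and the children contribute $\hat Z$.

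Rather than solving this from scratch, we plan to use Budd's bijection with colorful quadrangulations \cite{budd-flat-disks}. Combined with the enumeration of Eulerian orientations \cite{bmep-eulerian-orientations}, it supplies the generating function of rigid quadrangulations by corners and row/column structure. The continuous lengths turn the discrete weights into Laplace integrals, and the resulting generating function is expressed via an explicit algebraic or elliptic-type series.

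\emph{Step 2: matching the closed form.} We then verify that $\hat Z$ equals the Laplace transform of the claimed density:
\[
\hat Z(\mu)=\frac1{8\pi}\int_0^1 \frac{s}{\mu+s/2}\sqrt{\frac{1-s}{1+s}}\dd s.
\]
The check consists of substituting this Stieltjes-type transform into the recursion and confirming it satisfies the equation together with the correct boundary behaviour as $\mu\to\infty$. The branch selection is fixed by $Z(b)\to$ finite behaviour as $b\to 0$. The critical value $\q=(4\pi)^{-1/2}$ should be exactly the parameter at which the spectral density's square-root edge at $s=0$ turns into the $s\sqrt{\cdot}$ vanishing, which produces the $b^{-2}$ tail.

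\emph{Step 3: asymptotics and total mass.} Both follow directly from the integral formula. As $b\to\infty$, Laplace's method near $s=0$, where $s\sqrt{(1-s)/(1+s)}\sim s$, gives $Z(b)\sim\frac1{8\pi}\int_0^\infty s e^{-bs/2}\dd s=\frac{1}{8\pi}\cdot\frac{4}{b^2}=\frac1{2\pi b^2}$. For the total mass, Fubini gives
\[
\int_0^\infty Z(b)\dd b=\frac1{4\pi}\int_0^1\sqrt{\frac{1-s}{1+s}}\dd s=\frac1{4\pi}\left(\frac\pi2-1\right)=\frac18-\frac1{4\pi},
\]
which equals $\frac18-\q^2$.

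\emph{Main obstacle.} We expect Step 1 to be the hard part, specifically obtaining the exact solution of the recursion: correctly accounting for corner weights and concave/convex transitions on the top of the first row, and identifying the right branch of the algebraic equation. We would lean on Budd's computation and translate it, and would treat finiteness of $Z(b)$ as a consequence of the explicit solution.
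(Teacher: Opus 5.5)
Your Step~3 is correct and is exactly how the paper concludes. The gap is in Steps~1--2. They carry all the content of the lemma, yet the proposal never says how the cited enumeration yields $Z(b)$, and the verification route you describe would not work as stated.

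First, your first-row decomposition is incomplete. The first row can also end at a concave corner atop one of its vertical sides (the paper's R/L events). The cut must then be extended beyond $[0,b]$. This gives a child of base $u>b+v$ above it and a reversed-orientation child of base $v$ below the extension. These events contribute $2\int M(\dd v)\int_{b+v}^\infty e^{-(u-b-v)/2}Z(u)\,\dd u$. The one-sided constraint $u>v$ does not factor under the Laplace transform, so the equation is not algebraic in $\hat Z(\mu)$.

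It cannot be algebraic. Any recursion whose children all sit above subintervals of $[0,b]$ closes into an algebraic equation. For example, with at most two children it reads $\hat Z=(\q^2+\hat Z)^2/(\mu+\tfrac12)$, whose solution is Catalan-type. The true base-edge series instead has a logarithmic singularity at $x=2$, with coefficients $A_p\sim 2^{-p}/(4\pi p^2)$.

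Second, even with the correct integral equation, checking that your Stieltjes candidate is a nonnegative fixed point only shows it dominates the true $Z$. The true $Z$ is the minimal nonnegative solution, the limit of iterates from $0$. ``Branch selection'' does not provide uniqueness for a nonlinear integral equation.

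The missing idea is to integrate out the non-base widths for each fixed combinatorial type. This makes the bijective inputs you cite directly usable and removes the need for any recursion. Let a rigid quadrangulation $\cR$ have $p$ base edges, base-column widths $w_1,\dots,w_p$, and remaining row/column widths $v_1,\dots,v_k$. Each non-base width appears on two boundary sides, and the far ends of the base columns contribute $b$, so the non-base perimeter is $b+2\sum_j v_j$. The $v_j$-integrals therefore give $1$ each, and the simplex $\sum_i w_i=b$ gives $b^{p-1}/(p-1)!$. The contribution of $\cR$ is $\q^{c(\cR)}e^{-b/2}b^{p-1}/(p-1)!$.

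Hence $Z(b)=e^{-b/2}\sum_{p\ge1}A_p b^{p-1}/(p-1)!$, where $A_p$ is the $\q^{c}$-weighted count of rigid quadrangulations with $p$ base edges. Equivalently, $\hat Z(\mu)=\sum_p A_p(\mu+\tfrac12)^{-p}$, so your Laplace transform is literally this generating function at $x=(\mu+\tfrac12)^{-1}$. Your remark that continuous lengths produce Laplace integrals is the right instinct, but this is what it has to become.

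The concrete input from Budd's bijection and Bousquet-M\'elou--Elvey Price is
\[
\q^2+\sum_{p\ge1}A_px^p=\frac1{4x}-\frac{2-x}{4\pi x\sqrt{1-x}}\arccos\Bigl(\frac{x}{2-x}\Bigr)=\frac1{4\pi}\int_0^1\frac{s}{\sqrt{1-s^2}}\,\frac{\dd s}{1-\frac x2(1-s)},
\]
where the second equality uses $s=\cos\theta$. This series is neither algebraic nor elliptic. Extracting $A_p=\frac1{4\pi}\int_0^1\frac{s}{\sqrt{1-s^2}}\bigl(\frac{1-s}2\bigr)^p\dd s$ and summing the exponential series gives the stated density, and hence your Stieltjes formula for $\hat Z$. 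No fixed-point check or branch choice is needed.
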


\begin{proof}
	We first compute the contribution to $Z(b)$ of a fixed rigid quadrangulation $\cR$ whose base
	has $p$ edges. Recall that a flat surface $\cS$ is obtained from $\cR$ by specifying the widths of the rows and columns of $\cR$. 
	Let $w_1,\dots,w_p$ be the widths of the $p$ columns meeting
	the base, and let $v_1, \dots, v_k$ be the
	remaining row and column widths. The base length is
	$b = w_1+\cdots+w_p$, and the perimeter excluding the base has length $b + 2 \sum v_j$. Thus, the Boltzmann weight of $\cS$ is
	\[q_*^{c(\cS)} e^{-r(\cS)/2} = q_*^{c(\cR)} e^{-b/2} e^{-\sum v_j}.\]
	Integrating this against Lebesgue measure on $(v_1, \dots, v_k) \in (0,\infty)^{k}$ gives $q_*^{c(\cR)} e^{-b/2}$, 
	and the pushforward of Lebesgue measure on $(0,\infty)^p$
	under $(w_1,\dots,w_p)\mapsto w_1+\cdots+w_p$ has
	density $b^{p-1}/(p-1)!$ with respect to $\dd b$.
	Hence the contribution of $\cR$
	to $Z(b)$ is
	$\q^{c(\cR)}e^{-b/2}b^{p-1}/(p-1)!$.
	Summing over all rigid quadrangulations therefore gives
	\[
	Z(b)=e^{-b/2}\sum_{p\ge1}A_p\frac{b^{p-1}}{(p-1)!}, \qquad A_p:=\sum_{\cR:\,\text{base has }p\text{ edges}}
	\q^{c(\cR)}\quad  \text{ for } p \geq 1.
	\]
	
	We now evaluate this sum. For $0 < x < 1$, we have 
	\[\q^2+\sum_{p\ge1}A_p x^p
	=\frac1{4x}
	-\frac{2-x}{4\pi x\sqrt{1-x}}
	\arccos\!\left(\frac{x}{2-x}\right) = \frac1{4\pi}\int_0^1
	\frac{s}{\sqrt{1-s^2}}\,
	\frac{\dd s}{1-\frac{x}{2}(1-s)},
	\]
	where the first equality follows from the bijection \cite[Theorem~5]{budd-flat-disks} and the generating-function formula in
	\cite[Section~9.3, p.~53]{bmep-eulerian-orientations}, and the second equality follows from integrating with $s=\cos\theta$. Extracting coefficients yields
	$A_p=\frac1{4\pi}\int_0^1
	\frac{s}{\sqrt{1-s^2}}
	\left(\frac{1-s}{2}\right)^p\dd s$, hence 
	\[
		Z(b)
		=e^{-b/2}\sum_{p\ge1}A_p\frac{b^{p-1}}{(p-1)!}
		=\frac{e^{-b/2}}{4\pi}\int_0^1
		\frac{s}{\sqrt{1-s^2}}\,
		\frac{1-s}{2}e^{b(1-s)/2}\dd s
		=\frac1{8\pi}\int_0^1
		s\sqrt{\frac{1-s}{1+s}}e^{-bs/2}\dd s.
	\]
	In particular, $Z(b)<\infty$. The substitution $s=2u/b$
 gives
	$Z(b)\sim(2\pi b^2)^{-1}$. Finally,
	\[
	\int_0^\infty Z(b)\dd b
	=\frac1{4\pi}\int_0^1\sqrt{\frac{1-s}{1+s}}\dd s
	=\frac1{4\pi}\left(\frac\pi2-1\right)
	=\frac18-\q^2. \qedhere
	\]
\end{proof}

\begin{remark}
	We gave the proof of Lemma~\ref{lem-partition} using the bijective result \cite[Theorem~5]{budd-flat-disks} mainly for simplicity of presentation. Alternatively, the partition function can be obtained
	directly from the recursive decomposition in the next section using methods from analytic combinatorics; see  \cite[Equation (25)]{budd-flat-disks} and \cite[Proposition 9.3]{bmep-eulerian-orientations}.
\end{remark}

\subsection{The row-by-row exploration}\label{subsec-row-by-row}

\begin{figure}[ht!]
	\begin{center}
		\includegraphics[scale=0.515]{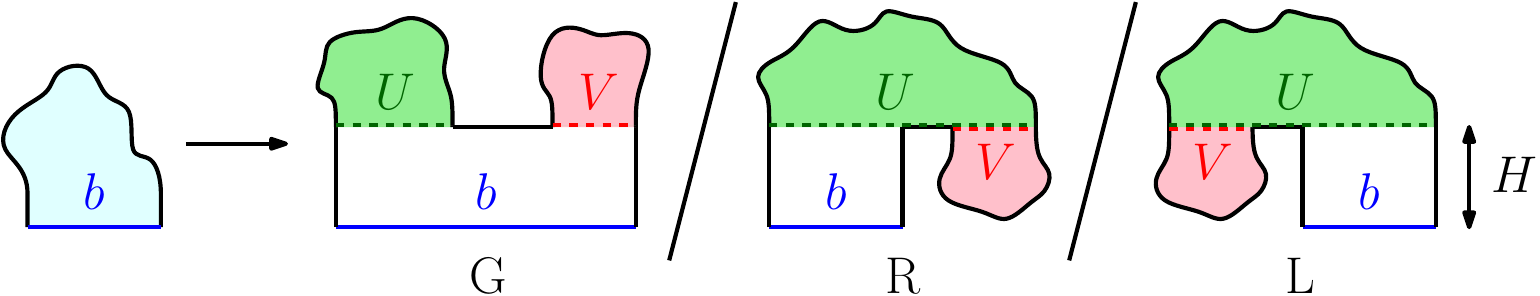}
	\end{center}
	\caption{\label{fig-row-decomposition} Branching events described in Proposition~\ref{prop-kernel}. Here, $H$ is the first-row height, and $U, V \geq 0$ are the base lengths of the children; base length zero corresponds to having an empty child.} 
\end{figure}

We now present the \emph{row-by-row exploration} introduced by Budd \cite{budd-flat-disks}. It gives a recursive self-similar decomposition of a Boltzmann
rectilinear disk; see Figure~\ref{fig-row-decomposition}.
Let $b>0$ and consider a sample $\cS$ from ${\P_b^{(1)}}$.
By definition, its base has length $b$ and is adjacent to two convex
corners. Orient $\cS$ so its base is horizontal and the disk is locally
above the base. From each concave corner of $\cS$, draw a horizontal
line segment until it hits the boundary of $\cS$ again; this cuts $\cS$
into finitely many smaller rectangles. Let $\Box$ be the
rectangle adjacent to the base, and let $H>0$ be its height.
The other pieces obtained by making the cuts described below are
called the \emph{children} of $\cS$. Each child is a rectilinear disk
whose base lies along the horizontal line containing the top edge
of $\Box$, and has the same (resp.\ opposite) orientation
as $\cS$ if it lies locally above (resp.\ below) its base.
There are three possible configurations:
\begin{itemize}
	\item (G event)
	The top edge of $\Box$ overlaps with some edge of $\cS$
	on an open interval. Cutting along the top edge of $\Box$
	gives at most two children with base lengths $U,V\geq0$,
	both with the same orientation as $\cS$.
	
	\item (R event)
	The right endpoint of the top edge of $\Box$ is a
	concave corner of $\cS$. Cut along the top edge of $\Box$
	extended to the right as far as possible. This gives at most
	two children with base lengths $U\geq b$ and $V\geq0$.
	The child with base length $U$ has the same orientation as $\cS$,
	whereas the child with base length $V$ has the opposite
	orientation.
	
	\item (L event)
	Same as R event but with ``right'' replaced by ``left''.
\end{itemize}
Note that any child with base length zero is empty; for instance,
in the G event with $U=V=0$ we simply have $\cS=\Box$.

Let $M(\dd u)=Z(u)\dd u+{\q^2}\delta_0(\dd u)$.
This is the natural measure describing the base length of Boltzmann
rectilinear disks when we also include a formal empty disk of width
zero, counted as having two corners and hence having weight ${\q^2}$. We have $|M| = 1/8$ by   Lemma~\ref{lem-partition}.
As we will see, the branching exploration is conveniently described
when using $M$.

\begin{proposition}\label{prop-kernel}
	Under ${\P_b^{(1)}}$, the height $H$ has law $\operatorname{Exp}(1)$ and is independent of everything below.
	The joint probability measure of each event and its child
	widths $(U,V)$ is
	\begin{align}
		\text{event }\mathrm G:\quad
		&\frac{e^{-(b-u-v)/2}}{Z(b)}
		\mathbf 1_{\{u+v<b\}}M(\dd u)M(\dd v),
		\label{eq-kernel-g}\\
		\text{each event }\mathrm R,\mathrm L:\quad
		&\frac{e^{-(u-b-v)/2}}{Z(b)}
		\mathbf 1_{\{u>b+v\}}Z(u)\dd u\,M(\dd v).
		\label{eq-kernel-r}
	\end{align}
	Conditional on the first-row type
	($\mathrm G$/$\mathrm R$/$\mathrm L$) and
	$(H,U,V)$, the nonempty children are independent
	with laws ${\P_U^{(1)}}$ and ${\P_V^{(1)}}$.
\end{proposition}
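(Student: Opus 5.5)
The plan is to decompose the measure $\q^{c(\cS)}e^{-r(\cS)/2}\lambda^b(\dd\cS)$ according to the first row. This gives a measure-preserving bijection between rectilinear disks of base length $b$ and tuples (event type, $H$, child disks). Then we read off the weights.

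Fix a disk $\cS$ with base length $b$. Let $\Box$ be the bottom rectangle, of width $b$ and height $H$. The first-row cut is determined by the combinatorial type. Conversely, given the event type, a height $h>0$ and children, we can reglue them:
\begin{itemize}
\item in event G, two children of base lengths $u,v$ with $u+v<b$ are placed at the two ends of the top edge of $\Box$, and the middle stretch of length $b-u-v$ is boundary;
\item in event R, a child of base $u>b+v$ is glued along the top edge extended to the right, and a child of opposite orientation with base $v$ is glued below the extension.
\end{itemize}
I will check that this map is a bijection onto generic disks, i.e.\ rigid quadrangulations with the rows and columns of $\cR$ matched to those of the children plus $h$. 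Under it, Lebesgue measure $\lambda^b$ corresponds to $\dd h\,\lambda^u(\dd\cS_1)\,\lambda^v(\dd\cS_2)$ times $\dd u\,\dd v$ (or a point mass when a child is empty). This is the main obstacle. One has to verify that the width parameters of $\cS$ are exactly the union of those of the children, together with $h$ and the free parameter among the base-column widths of $\cS$. It is not simply a union, since the children's base columns are columns of $\cS$. The cleanest route is to use that $\lambda$ is the pushforward of Lebesgue measure in row/column widths. In event G, the columns of $\cS$ meeting the base consist of the base columns of the two children plus the middle gap column, with $b$ fixed. So the parametrization $(w_i)$ with $\sum w_i=b$ matches $(\text{child base columns}, b-u-v)$, and the Jacobian is $1$. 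Event R is similar: the extension length $u-b-v$ plays the role of the gap, and the child below of base $v$ lives on the extension. Empty children correspond to the atoms at $0$ and produce no parameters.

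Next I count the weights. In event G:
\begin{itemize}
\item Corners: $c(\cS)=c(\cS_1)+c(\cS_2)$. The two convex base corners of $\cS$ replace the ones consumed by gluing: each child's two base corners become either corners of $\cS$ or are absorbed, with the gap's ends creating concave corners. An empty child is given weight $\q^2$ so that the count matches uniformly. I will check this corner accounting case by case.
\item Non-base perimeter: the two vertical sides give $2h$, and $r(\cS)=2h+(b-u-v)+r(\cS_1)+r(\cS_2)$.
\end{itemize}
So the weight factorizes as $e^{-h}\,e^{-(b-u-v)/2}\,w(\cS_1)w(\cS_2)$. In event R, the perimeter contributions are $2h$, then the extension minus the overlaps $u-b-v$, then the children. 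The corners again balance: the concave corner and the corner at the far end pair with the child corners. This gives $e^{-h}e^{-(u-b-v)/2}w(\cS_1)w(\cS_2)$.

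Finally, I integrate out the children given $(u,v)$. Integrating $w$ against $\lambda^u$ gives $Z(u)$, or $\q^2$ for an empty child. This produces $M(\dd u)M(\dd v)$ in event G. In event R the child of base $u>b$ is never empty, so we get $Z(u)\dd u\,M(\dd v)$. Dividing by $Z(b)$ gives \eqref{eq-kernel-g} and \eqref{eq-kernel-r}. The factor $e^{-h}\dd h$ separates, so $H\sim\operatorname{Exp}(1)$ and is independent of the rest. Given the event and $(H,U,V)$, the conditional density of $(\cS_1,\cS_2)$ is proportional to $w(\cS_1)\lambda^U\otimes w(\cS_2)\lambda^V$, i.e.\ the product $\P_U^{(1)}\otimes\P_V^{(1)}$.

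As a sanity check, the total masses should sum to $1$. Since $\int_0^\infty Z=\tfrac18-\q^2$ by Lemma~\ref{lem-partition}, this is equivalent to the recursion \cite[Eq.~(25)]{budd-flat-disks}, which we could verify from \eqref{eq-partition}.
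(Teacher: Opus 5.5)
Your proposal is correct and follows the paper's proof: on each event you factor $\lambda^b$ as $\dd h$ times the children's Lebesgue measures, with the gap width determined by $(b,u,v)$ and atoms $\delta_0$ for empty children. You then factor the weight as $e^{-h}e^{-(\text{gap})/2}w(\cS_1)w(\cS_2)$ using $c(\emptyset)=2$, $r(\emptyset)=0$, and integrate out the children to get $M(\dd u)M(\dd v)$ or $Z(u)\dd u\,M(\dd v)$. As in the paper, event R is only sketched, and there the column widths of $\cS$ are not literally a union of the children's (the concave corners on the extended top edge send vertical rays up into the $U$-child, and rays cross the interface between the two children), so the unit-Jacobian claim is most cleanly checked in boundary side-length coordinates.
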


\begin{proof}
	The claim follows from two properties: factorization of the underlying Lebesgue measure $\lambda^b$, and factorization of the Boltzmann weight $w(\cS) = q_*^{c(\cS)} e^{-r(\cS)/2}$. 
	
	Firstly, on the restriction of $\lambda^b$ to $\mathrm G$, let $\cS_1$ and $\cS_2$ be the two children. Then the joint law of $(H, U, V, \cS_1, \cS_2)$ on the event that $\cS_1$ and $\cS_2$ are nonempty is 
	\[\mathbf 1_{\{u + v < b\}}
	\dd h\,\dd u\,\dd v\,
	\lambda^u(\dd\cS_1)\lambda^v(\dd\cS_2)\]
	where $\dd h$, $\dd u$, and $\dd v$ denote Lebesgue measure
	on $(0,\infty)$. Indeed, the row and column widths of $\cS$
	are precisely those of the two children, together with $h$
	and $x = b-u-v$ (after disintegrating at $b=u+v+x$, no
	$\dd x$ factor remains). For the case where $\cS_1$ is empty, we replace $\dd u\,  \lambda^u(\dd \cS_1)$ by $\delta_0(\dd u)\,  \delta_\emptyset (\dd \cS_1)$, and likewise for $\cS_2$. Thus, the restriction of ${\lambda^b}$ to $\mathrm G$ with no constraints on the children is $\mathbf 1_{\{u + v < b\}}
	\dd h\, (\dd u \, \lambda^u(\dd \cS_1) + \delta_0(\dd u) \, \delta_\emptyset (\dd \cS_1)) (\dd v \, \lambda^v(\dd \cS_2) + \delta_0(\dd v) \, \delta_\emptyset (\dd \cS_2))$.
	
	Now, by direct case-by-case analysis we have 
	\[
	c(\cS)=c(\cS_1) + c(\cS_2),
	\qquad
	r(\cS)=r(\cS_1) + r (\cS_2) + 2h + (b - u - v),
	\]
	where we set $c(\emptyset) = 2$ and $r(\emptyset) = 0$. Thus the Boltzmann weight factorizes as $w(\cS) = e^{-h} e^{-(b-u-v)/2} w(\cS_1) w(\cS_2)$. Combining this with the factorization of Lebesgue measure, we conclude that on the event $\mathrm G$ the law of $(H, U, V, \cS_1, \cS_2)$ is  \eqref{eq-kernel-g} times $e^{-h} \dd h \, \P^{(1)}_u (\dd \cS_1)\, \P^{(1)}_v (\dd \cS_2)$, where we set $\P^{(1)}_0 = \delta_\emptyset$. 
	
	One can argue similarly for the event $\mathrm R$: the Lebesgue measure restricted to $\mathrm R$ factorizes as $\mathbf 1_{\{u > b+v\}}
	\dd h\, \dd u \, \lambda^u(\dd \cS_1) (\dd v \, \lambda^v(\dd \cS_2) + \delta_0(\dd v) \, \delta_\emptyset (\dd \cS_2))$, whereas the weights factor as $w(\cS) = e^{-h} e^{-(u-b-v)/2} w(\cS_1) w(\cS_2)$, so the law of $(H, U, V, \cS_1, \cS_2)$ on $\mathrm R$ is  \eqref{eq-kernel-r} times $e^{-h} \dd h \, \P^{(1)}_u (\dd \cS_1)\, \P^{(1)}_v (\dd \cS_2)$. The same is true for $\mathrm L$. Thus the claim follows. 
\end{proof}

\subsection{The vertical exploration}
\label{subsec-discrete-vertical}

In this section we fix some scaling factor $B>0$. In Section~\ref{sec-discrete-convergence} $B$ will be sent to $\infty$ to take the scaling limit.

Consider a sample $\cS \sim \P_b^{(B)}$; as before, orient $\cS$ so its base is horizontal and it lies locally above its base. Pick a point from its base uniformly at random, and let $\eta$ be the vertical ray in $\cS$ from this point, parametrized by length, until it hits the boundary at time $T^{(B)}$. For rational $t \in (0, T^{(B)})$, let $L_t^{(B)}$ and $R_t^{(B)}$ be the lengths of the left and right horizontal rays from $\eta(t)$ to the boundary, and extend to all $t \geq 0$ by right-continuity and setting $L_t^{(B)} = R_t^{(B)} = 0$ for $t \geq T^{(B)}$. This gives the \emph{length process} $(L_t^{(B)}, R_t^{(B)})_{[0,\infty)}$ of the \emph{vertical exploration}. See  Figure~\ref{fig-exploration}.

At each $\text{G}/\text{R}/\text{L}$ event, the ray $\eta$ either stops (if it hits a horizontal boundary edge of $\cS$), or continues upward into a child having the same orientation as $\cS$. In this way, the vertical exploration discovers all children hit by $\eta$, leaving behind some unexplored children. 
By iterating Proposition~\ref{prop-kernel}, conditioned on $(L_t^{(B)}, R_t^{(B)})_{[0,\infty)}$ and on the base lengths of the unexplored children, the unexplored children are conditionally independent Boltzmann rectilinear disks whose conditional laws are $\P_u^{(B)}$ for their
respective base lengths $u>0$.

We now identify the transition kernel of the length process. Set
\[
k(u)=\int_{[0,u]}e^{-(u-v)/2}M(\dd v),\qquad u>0.
\]
Notice that $(L_t^{(B)}, R_t^{(B)})_{[0, \infty)}$ is a pure jump process, with jumps occurring during $\text{G}/\text{R}/\text{L}$ events.
By Proposition~\ref{prop-kernel}, the times between jumps are independent
$\operatorname{Exp}(B)$ times, and 
the  transitions are
\begin{equation}\label{eq-discrete-one-row}
	\begin{aligned}
		(\ell,r)\to(\ell+z,r)
		&:\quad
		B k(B|z|)
		\frac{Z(B(\ell+r+z))}{Z(B(\ell+r))}\dd z,
		&& z>-\ell,\\
		(\ell,r)\to(\ell,r+z)
		&:\quad
		B k(B|z|)
		\frac{Z(B(\ell+r+z))}{Z(B(\ell+r))}\dd z,
		&& z>-r,\\
		(\ell,r)\to(0,0)
		&:\quad
		\frac{k(B\ell)k(Br)}{Z(B(\ell+r))}.
	\end{aligned}
\end{equation}
In all three cases, the $k$ term comes from integrating out the unexplored child or children. 
Set
\[
h_B(\ell,r)=2\pi B^2Z(B(\ell+r)),\qquad \ell,r>0,
\]
and extend $h_B$ by zero outside $(0,\infty)^2$ and at
the cemetery state.
Let $P_t^{(B),+}(x,\dd y)$ be the transition kernel of
the symmetric compound Poisson process with L\'evy measure
$B^2k(B|z|)\dd z$, killed on leaving $(0,\infty)$.

\begin{lemma}\label{lem-discrete-h-transform}
	The length process $(L_t^{(B)},R_t^{(B)})$ has transition
	kernel on $(0,\infty)^2$
	\begin{equation}\label{eq-discrete-vertical-kernel}
		Q_t^{(B)}((\ell,r),\dd x\,\dd y)
		=
		\frac{h_B(x,y)}{h_B(\ell,r)}
		P_t^{(B),+}(\ell,\dd x)
		P_t^{(B),+}(r,\dd y).
	\end{equation}
	The missing mass is the probability of termination by time $t$.
	Thus the length process is the Doob $h_B$-transform of
	a pair of independent compound Poisson processes with the
	above L\'evy measure, killed on leaving $(0,\infty)^2$.
\end{lemma}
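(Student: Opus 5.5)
The plan is to compare generators. Both sides are pure-jump Markov processes on $(0,\infty)^2$ with bounded jump rates, so equality of their generators is enough for equality of transition kernels.

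\emph{Jump rates of the length process.} From \eqref{eq-discrete-one-row}, the process waits an $\operatorname{Exp}(B)$ time at each state. It then jumps by $z$ in one coordinate at rate
\[
B\cdot Bk(B|z|)\,\frac{Z(B(\ell+r+z))}{Z(B(\ell+r))}\dd z,
\]
and is killed at rate
\[
B\,\frac{k(B\ell)k(Br)}{Z(B(\ell+r))}.
\]
Equivalently, the jump rate equals $\frac{h_B(\ell+z,r)}{h_B(\ell,r)}B^2k(B|z|)\dd z$. Since $h_B$ vanishes off $(0,\infty)^2$, the restriction $z>-\ell$ is automatic. The rescaling is consistent: $H\sim\operatorname{Exp}(1)$ before dividing lengths by $B$ gives rate $B$, and the jump density $k(\cdot)$ in unscaled units becomes $Bk(B|z|)\dd z$.

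\emph{Generator of the $h_B$-transform.} Let $P^{\pm}$ be the pair of independent killed compound Poisson processes with Lévy measure $\nu_B(\dd z)=B^2k(B|z|)\dd z$. Each has total mass
\[
|\nu_B| = 2B\int_0^\infty k(u)\dd u .
\]
By Fubini, $\int_0^\infty k(u)\dd u = 2|M| = 1/4$, using $|M|=1/8$ from Lemma~\ref{lem-partition}. So each coordinate jumps at rate $B/2$, and the pair jumps at total rate $B$.

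The killed pair has generator
\[
\mathcal A f(\ell,r)=\int \bigl(f(\ell+z,r)-f(\ell,r)\bigr)\nu_B(\dd z)+(\text{same in }r),
\]
with $f=0$ off the quadrant. The $h_B$-transform has generator $h_B^{-1}\mathcal A(h_B f)$. Its jump part reproduces exactly the rates above. It also has a killing rate $-\mathcal A h_B/h_B$, which must be nonnegative, so $h_B$ should be $\mathcal A$-superharmonic.

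\emph{Main obstacle: the killing identity.} It remains to check that this killing rate matches the termination rate, i.e.
\[
-\mathcal A h_B(\ell,r) = B\,\frac{k(B\ell)k(Br)}{Z(B(\ell+r))}\,h_B(\ell,r).
\]
Rescaling to $B=1$ and writing $b=\ell+r$, this becomes
\[
Z(b)=\int_{-\ell}^{\infty}k(|z|)Z(b+z)\dd z+\int_{-r}^{\infty}k(|z|)Z(b+z)\dd z+k(\ell)k(r),
\]
using that the total jump rate is $1$.

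This is the first-row recursion, i.e.\ the statement that the probabilities in Proposition~\ref{prop-kernel} sum to one, with the children integrated out. I would verify it directly from Proposition~\ref{prop-kernel} as follows.
\begin{itemize}
\item For the $\mathrm G$ event, split according to the position of the base point $x\in(0,b)$ of the vertical ray relative to the horizontal boundary edge. If the ray hits that edge, the event contributes to termination, with the two $k$-factors arising from integrating $u$ and $v$ on either side. Otherwise the ray enters a child, contributing a jump $z<0$.
\item The $\mathrm R$ and $\mathrm L$ events give the jumps with $z>0$.
\end{itemize}
Averaging over the uniform base point should recover the identity, although there may be a factor of $1/b$ from the uniform choice to track. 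The termination rate and the jump kernel in \eqref{eq-discrete-one-row} were presumably derived exactly this way, so the identity holds by total probability.

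\emph{Conclusion.} With the generators matched, both Markov processes are bounded-rate pure-jump processes with the same Q-matrix. Hence they have the same transition kernels, and the missing mass of \eqref{eq-discrete-vertical-kernel} is the killing probability.
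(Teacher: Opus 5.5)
Your proposal is correct and is essentially the paper's argument. The paper also uses $\int_0^\infty k=2|M|=1/4$ to get total jump rate $B$ for the compound Poisson pair, multiplies the per-jump density $Bk(B|z|)\dd z$ by the $h_B$-ratio to recover the first two lines of \eqref{eq-discrete-one-row}, and attributes the missing mass to termination, with both processes having $\operatorname{Exp}(B)$ holding times; your generator comparison is the same computation.

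One correction: no averaging over the uniform base point (and no factor of $1/b$) enters the killing identity. That identity must hold pointwise in each state $(\ell,r)$, and it is exactly the total-probability fact that the three lines of \eqref{eq-discrete-one-row} sum to one, which your concluding sentence of that step correctly invokes.
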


\begin{proof}
	Since $\int_0^\infty k(u)\dd u=2|M|=1/4$, the pair has
	total jump rate $B$.
	At each jump, its transition in either coordinate has density
	$Bk(B|z|)\dd z$.
	Multiplying each surviving transition by the ratio of $h_B$
	at the new and old states gives
	\eqref{eq-discrete-one-row}, with the missing mass accounting
	for termination.
	Both processes have independent $\operatorname{Exp}(B)$
	holding times, proving the claim.
\end{proof}

\section{JT sectors}\label{sec-jt-sectors}
The goal of this section is to introduce random flat surfaces we call JT sectors, and derive some of their fundamental properties using a conformal welding input from \cite{ang-jt-zipper}. 

In Section~\ref{subsec-flat-surfaces} we introduce the notion of flat surfaces and flow lines. In Section~\ref{subsec-jt-sectors} we recall the  angle-$\theta$ JT sector first introduced in  \cite{ang-jt-zipper} and state some properties proved in \cite{ang-jt-zipper}.  Section~\ref{subsec-rerooting} proves a rerooting symmetry of the angle-$\pi$ JT sector, and Sections~\ref{subsec-rerooting} and~\ref{subsec-quarter-partition-function} compute partition functions for the angle-$\pi$ and angle-$\pi/2$ JT sectors respectively. 

\subsection{Flat surfaces, developing maps, and flow lines}
\label{subsec-flat-surfaces}

A \emph{flat surface} is an oriented two-dimensional manifold
equipped with a smooth Riemannian metric of zero Gaussian curvature,
which is conformally equivalent to the disk.
Boundary points and arcs are understood via conformal uniformization.
An open boundary arc is \emph{straight} if it has a neighborhood
isometric to a subset of the closed upper half-plane, with the
arc mapped onto a bounded open interval of the real line.

	Parametrize a flat surface as
	$\cS=(\D_+,e^{2\sigma(z)}|dz|^2)$; flatness implies that $\sigma$ is harmonic. Since $\D_+$ is simply
	connected, $\sigma$ has a harmonic conjugate $\wt\sigma$. The holomorphic function
	$e^{\sigma+i\wt\sigma}$ admits a primitive $F$ satisfying
	\begin{equation}\label{eq-developing-map}
		F'(z)=e^{\sigma(z)+i\wt\sigma(z)},
		\qquad |F'(z)|=e^{\sigma(z)}.
	\end{equation}
	The map $F$ is an
	orientation-preserving local isometry from
	$(\D_+,e^{2\sigma(z)}|dz|^2)$ to $\C$. We call it a \emph{developing map} of $\cS$. 
	Note that $F$ is unique up to rotation and translation (these correspond to the extra freedom in choosing the harmonic conjugate and primitive respectively), and while $F$ is locally injective, it
need not be globally injective.

Fix a developing map $F$.
For a point $p\in\D_+$, the \emph{angle-$\theta$ flow line}
from $p$ is the maximal continuous curve
$\eta_p^\theta:[0,T_p^\theta)\to\D_+$ satisfying
\[
\eta_p^\theta(0)=p,\qquad
F(\eta_p^\theta(t))=F(p)+te^{i\theta},
\qquad 0\le t<T_p^\theta.
\]
This definition also holds if $p$ lies in the relative interior
of a straight boundary arc, and the direction $\theta$ points
into the surface. The \emph{length} or \emph{developed length}
of a flow-line segment $\gamma$ is the length of the
line-segment $F(\gamma)$. Finally, the \emph{horizontal chord}
through a point $p\in\D_+$ is the union of its angle-$0$ and
angle-$\pi$ flow lines.

\subsection{JT sectors and cutting}\label{subsec-jt-sectors}
In this section, we introduce the JT sector as in \cite{ang-jt-zipper}, and then state two inputs that we need from \cite{ang-jt-zipper}. 

Let $\mu$ be the law of the Gaussian holomorphic function
\begin{equation}\label{eq-gaussian}
	G(z)=\sum_{n\ge1}\frac{G_n}{\sqrt n}z^n,
	\qquad z\in\D,\qquad
	G_n\overset{\mathrm{i.i.d.}}{\sim}N(0,1).
\end{equation}
Almost surely, \eqref{eq-gaussian} converges on compact subsets of $\D$. 

\begin{definition}\label{def-jt-sector}
	For $\theta>0$, sample a pair $(G,A)$ from 
	\begin{equation}\label{eq-sector-measure}
		\Gamma(\theta/\pi)\,
		e^{(1-\theta/\pi)a}\mu(\dd g)\,\dd a ,
	\end{equation}
	and define the holomorphic function
	\begin{equation}\label{eq-primitive}
		F_{\theta,G,A}(z)
		=e^A\int_0^z
		\frac{w^{\theta/\pi-1}e^{G(w)}}{\sqrt{1-w^2}}\dd w,
		\qquad z\in\D_+.
	\end{equation}
	Here $0<\arg w<\pi$, and the square root is the branch equal
	to one at zero. 

	The flat surface with three marked points $(\D_+, |F_{\theta,G,A}'(z)|^2|dz|^2, 1, -1, 0)$ is called an \emph{angle-$\theta$ JT sector}. The point $0$ is called its \emph{vertex}. The boundary arcs $(-1,0)$ and $(0,1)$ are its left and right \emph{straight sides}, and the boundary arc $\partial \D_+ \backslash [-1,1]$ is its \emph{rough boundary}. We denote the law of the angle-$\theta$ JT sector by $\Smeas_\theta$. 
\end{definition}
In Definition~\ref{def-jt-sector}, 
the left and right straight sides have lengths $|F_{\theta,G,A}(-1)|$ and $F_{\theta,G,A}(1)$, and meet at its vertex with interior angle
$\theta$.

We note that the vertex of a JT sector a.s.\ determines the other two marked points, since they are a.s.\ the endpoints of the longest straight boundary arcs {emanating} from the vertex. When we discuss the flow lines of a JT sector, we use the developing map which sends its vertex to $0$ and its right straight side to a subset of $(0, \infty)$. 

In this paper, we only work with angle-$\theta$ JT sectors for $\theta \in \{ \pi/2, \pi\}$, and typically use $\cS$ to denote an angle-$\pi$ JT sector and $\cQ$ for an angle-$\pi/2$ JT sector. 

We now state two inputs that we need from \cite{ang-jt-zipper}. 
\begin{proposition}[{\cite[Proposition~A.1]{ang-jt-zipper}}]\label{prop-boundary-continuity}
	For $\theta>0$ and $\mu$-almost every $g$,
	the map $F_{\theta,g,0}$ in \eqref{eq-primitive} extends continuously
	to $\ol{\D_+}$.
\end{proposition}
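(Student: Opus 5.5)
The plan is to prove a uniform Cauchy criterion for $F=F_{\theta,g,0}$ as $z\to\partial\D_+$ by bounding $\int|F'|\,|dz|$ along Whitney chains. The one delicate input is that the relevant Gaussian field sits exactly at criticality. Write $|F'(w)|=|w|^{\theta/\pi-1}|1-w^2|^{-1/2}e^{\Re G(w)}$. I split $\ol{\D_+}$ into three regions:
\begin{itemize}
\item a neighbourhood of the vertex $0$;
\item neighbourhoods of the corners $\pm1$;
\item the rest of the closed half-disk, which near the boundary consists of the rough arc away from $\pm1$ and the straight segments $(-1,1)$ away from $0,\pm1$.
\end{itemize}
Near $0$ the function $G$ is holomorphic and bounded, so $|F'|\lesssim|w|^{\theta/\pi-1}$ is integrable along rays. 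Continuity at $0$ follows by dominated convergence; continuity on $(-1,1)$ away from $0,\pm1$ follows because $G$ is holomorphic across the real axis there. The real work is at the rough boundary and at $\pm1$.

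\emph{Rough boundary away from $\pm1$.} I would use a dyadic Whitney decomposition of the region $\{1-2^{-k}\le |w|\le 1-2^{-k-1}\}$ into about $2^k$ boxes $Q$ of size $2^{-k}$. These boxes form a binary tree in which each box has two children at the next scale. For $z,z'$ near a boundary point, $|F(z)-F(z')|$ is bounded by $\sum_{Q}\operatorname{diam}(Q)\sup_Q|F'|$ over the boxes in the two ancestral chains beyond the scale where they separate.

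By Cauchy estimates, $\sup_Q \Re G$ is controlled by the value of $\Re G$ at the center of $Q$ plus an error. That error is a local oscillation with Gaussian tails and variance $O(1)$ uniformly in $k$; a union bound lets us absorb it into an $O(\log k)$ term. Away from the real axis, $\operatorname{Var}\Re G(re^{i\phi})=\sum_n r^{2n}\cos^2(n\phi)/n=\tfrac12\log\frac1{1-r}+O(1)$. The field at box centers is therefore approximately a branching random walk with per-generation variance $\tfrac12\log 2$.

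This normalization is critical. A first-moment count gives $\max_Q \Re G\approx k\log2$, so the naive bound $2^{-k}e^{\max}$ is only $O(1)$ per scale, which does not sum. The key step is to use the refined maximum of a critical log-correlated field/branching random walk: $\max_Q\Re G = k\log 2-\tfrac32\log k+O_{\P}(1)$, as in Aïdékon and Hu--Shi, or via a barrier/second-moment argument in the style of discounted tree sums. This gives, almost surely, $\max_Q 2^{-k}\sup_Q|F'|\le k^{-3/2+o(1)}$ for all large $k$. That bound is summable, which yields uniform convergence of $F$ along chains and hence continuity.

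I expect this step to be the main obstacle. The field is only approximately a branching random walk, so the comparison must be made rigorous. I would first try a direct barrier argument on the Gaussian process $k\mapsto \Re G$ at box centers, controlling the correlations through the explicit covariance. The $\log k$ losses from the local oscillations need care, since the margin is only polynomial in $k$. The first-moment estimate alone is exactly borderline, so the argument must genuinely exploit the ballot-type $-\tfrac32\log k$ correction. A fallback is to prove a.s. finiteness and tail-vanishing of $\sup_{\text{rays}}\sum_k 2^{-k}e^{\Re G(Q_k)}$ directly, via the discounted-tree-sum technique.

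\emph{Near $\pm1$.} The reflection symmetry $G(\bar w)=\ol{G(w)}$ makes the variance of $\Re G$ up to $\log\frac1{1-r}$ near the real axis. On the real axis, $G(r)$ is a real Gaussian with variance $-\log(1-r^2)$, so a law-of-the-iterated-logarithm bound gives $e^{G(r)}\le (1-r)^{-o(1)}$ almost surely. Hence $|F'|\lesssim |1-w|^{-1/2-o(1)}$, which is integrable in a sector at $\pm1$. I would combine this with the Whitney argument above in a conical neighbourhood of $\pm1$, using the weight $|1-w^2|^{-1/2}$ to reduce the effective criticality near the corner. This should give continuity at $\pm1$ and complete the proof.
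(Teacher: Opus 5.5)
\textbf{Verdict: genuine gap.} Your setup is the right one: Whitney boxes forming a binary tree, with $\Re G$ at box centers behaving like a branching random walk. But the main step fails, and the computation behind it is off by a factor of two.

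\textbf{The per-scale estimate is wrong.} With binary branching and per-generation variance $\tfrac12\log 2$, the optimal tilt is $\lambda^*=2$, not $1$. It is $2(k\log 2-\Re G)$, not $k\log 2-\Re G$, that forms a boundary-case branching random walk. Hence $\max_Q\Re G=k\log 2-\tfrac34\log k+O_{\P}(1)$. Equivalently, $\Re G$ is $2^{-1/2}$ times a standard log-correlated field, whose maximum at scale $\epsilon$ is $\sqrt2\log\frac1\epsilon-\frac{3}{2\sqrt2}\log\log\frac1\epsilon$. Your per-scale bound is therefore $k^{-3/4}$, which is not summable.

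\textbf{The almost sure claim is also unjustified.} An $O_{\P}(1)$ statement does not give an almost sure bound for all large $k$. By Hu--Shi, the minimal position $m_n$ of a boundary-case walk satisfies $\liminf_n m_n/\log n=\tfrac12$ almost surely; only in probability does $m_n/\log n\to\tfrac32$. So almost surely $\max_Q 2^{-k}e^{\Re G(Q)}\ge k^{-1/4-o(1)}$ for infinitely many $k$. Even in your own normalization you would only get $k^{-1/2}$. Summing level-wise maxima genuinely diverges, and no sharper per-level estimate can rescue it.

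\textbf{The missing idea is the one you list only as a fallback.} You must control the \emph{maximum over rays of the discounted sum}: show that $\sup_{\mathrm{rays}}\sum_{k\ge k_0}2^{-k}\sup_{Q_k}|F'|$ is almost surely finite and tends to $0$ as $k_0\to\infty$. This can hold even though the sum of maxima diverges, because a single ray is near-extremal only at sparse scales. The ancestors of a near-maximal box at level $k$ typically sit a distance of order $\sqrt{\min(j,k-j)}$ below the extremal level $j\log 2$ at intermediate generations $j$. This discounted-tree-sum estimate is the entire content of the proposition. It is what the proof the paper relies on supplies, by adapting the occupation-time argument of A\"id\'ekon--Hu--Shi to the continuum. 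Your proposal names it but gives no argument for it.

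\textbf{Two secondary points also need repair.}
\begin{enumerate}
\item A union bound over $\asymp 2^k$ boxes turns $O(1)$-variance Gaussian oscillations into an $O(\sqrt k)$ loss, not $O(\log k)$. Instead, bound $\sup_Q|F'|$ by an average of $|F'|$ over an enlarged box, using subharmonicity.
\item Your law-of-the-iterated-logarithm bound controls $G$ only on the real axis. In a sector at $\pm1$ you need uniform control over the many boxes near the corner, which again reduces to the same tree-sum estimate. The weight $|1-w^2|^{-1/2}$, the box size, and the coarse field at scale $\rho$ (whose variance is doubled by reflection) together leave a net factor of roughly $\rho^{1/2-o(1)}$ at distance $\rho$ from the corner.
\end{enumerate}
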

The proof of Proposition~\ref{prop-boundary-continuity} was generated using GPT-6 Astra (Ultra). It draws
heavily on the occupation-time approach of A\"id\'ekon, Hu, and
Shi~\cite{ahs-discounted-tree-sums}, adapting their branching random walk arguments to our continuum setting. We claim no credit for this argument.  

As a consequence of Proposition~\ref{prop-boundary-continuity}, the lengths of the left and right straight sides of a JT sector are a.s.\ finite. 
Let $\{\Smeas_\theta (\ell, r)\}_{\ell, r > 0}$ be the disintegration of $\Smeas_\theta$ with respect to left and right  side lengths. That is,  $\Smeas_\theta = \iint_0^\infty \Smeas_\theta(\ell, r)  \dd \ell \dd r$, and each $\Smeas_\theta(\ell, r)$ is supported on the space of flat surfaces having  side lengths $(\ell, r)$. We define the partition functions\footnote{\cite[Chapter~4]{van-den-dungen-flat-disks} computed
	scaling limits of various discrete flat disk-sector partition functions,
	obtaining explicit expressions for
	their Mellin transforms.} 
\begin{equation}\label{eq-sector-partition-function}
	Z_\theta(\ell,r):=|\Smeas_\theta(\ell,r)|.
\end{equation}
These are determined
in Sections~\ref{subsec-rerooting} and~\ref{subsec-quarter-partition-function} for $\theta \in \{\pi/2, \pi\}$.

\begin{figure}[ht!]
	\begin{center}
		\includegraphics[scale=0.55]{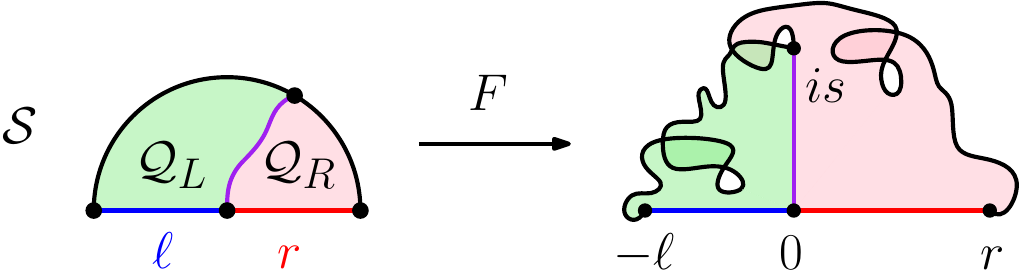}
	\end{center}
	\caption{\label{fig-weld} The cutting identity of Theorem~\ref{thm-sector-cutting}, shown in conformal coordinates (left) and under the developing map \(F\) (right). The vertical flow line (purple) has random length $s$ and is mapped to $[0,is]$. Cutting along it separates the angle-$\pi$ JT sector $\cS$ into two angle-$\pi/2$ sectors $\cQ_L,\cQ_R$, with respective side lengths $(\ell,s)$ and $(s,r)$.} 
\end{figure}

The key input underlying our argument is the following decomposition result, which says that cutting an angle-$\pi$ JT sector by a vertical flow line decomposes it into two angle-$\pi/2$ JT sectors having a shared side length. See Figure~\ref{fig-weld}. 

\begin{theorem}[{\cite[Theorem~1.1]{ang-jt-zipper}}]\label{thm-sector-cutting}
	The following holds for a.e.\ $\ell, r > 0$. Sample a flat surface $\cS$ from $\Smeas_\pi(\ell, r)$. Let $\cQ_L, \cQ_R$ be the left and right connected components obtained when $\cS$ is cut along the angle-$\pi/2$ flow line from its vertex. Then the joint law of $\cQ_L$ and $\cQ_R$ is 
	\begin{equation}\label{eq-cutting}
		\int_0^\infty
		\Smeas_{\pi/2}(\ell,s)\otimes
		\Smeas_{\pi/2}(s,r)\dd s.
	\end{equation}
\end{theorem}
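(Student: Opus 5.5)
The plan is to prove the identity by a quantum-zipper style argument: realize the welding of two angle-$\pi/2$ sectors as the end state of a Loewner-type zipping dynamic, and show that the measure \eqref{eq-cutting} is carried by this dynamic to $\int \Smeas_\pi(\ell,r)\dd\ell\dd r$. Disintegrating over the outer side lengths then gives the statement for a.e.\ $(\ell,r)$.

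\textbf{Step 1: boundary-value description.} Write the conformal factor of a sector through the holomorphic function $\log F'_{\theta,G,A}=A+(\theta/\pi-1)\log w+G(w)-\tfrac12\log(1-w^2)$. Since the coefficients $G_n$ are real, $\Im\log F'$ is constant on each straight side. So the sector is equivalently encoded by:
\begin{itemize}[label=--]
\item the trace of $\Re G$ on the rough arc, a log-correlated Gaussian field $X$ with $\E[X(e^{ia})X(e^{ib})]=-\log|e^{ia}-e^{ib}|-\log|e^{ia}-e^{-ib}|$;
\item the additive zero mode $A$, with its Lebesgue weight tilted by $e^{(1-\theta/\pi)a}$;
\item the singular factors at the vertex and the corners $\pm1$.
\end{itemize}
In this language, cutting along the vertical flow line $\eta$ is cutting along a curve on which $\Im\log F'=\pi/2$ is constant. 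Each piece $\cQ_L,\cQ_R$ therefore again has a conformal factor whose conjugate is constant on all its straight sides, the cut included. After uniformizing each piece to $\D_+$ (vertex to $0$, the endpoint of $\eta$ and the old corner to $\pm1$), each piece has the form \eqref{eq-primitive} with $\theta=\pi/2$, for some random $(G_L,A_L)$ and $(G_R,A_R)$. This step is deterministic and uses Proposition~\ref{prop-boundary-continuity} to make sense of side lengths and of the endpoint of $\eta$.

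\textbf{Step 2: reverse the cut.} Start from independent $(G_L,A_L)$, $(G_R,A_R)$ sampled from $\Smeas_{\pi/2}(\ell,s)\otimes\Smeas_{\pi/2}(s,r)\dd s$. Weld them isometrically along their common side of length $s$; flatness and straightness make the welding homeomorphism an explicit isometry. Uniformize the result, which should be an angle-$\pi$ sector with $\eta$ as the image of the seam.

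To compute the law of the welded surface, I would zip the seam progressively. Let $g_t$ be a reverse Loewner flow in $\D_+$ (in half-plane coordinates $\mathbb H$ via $z\mapsto$ Cayley-type map) gluing the two sides by developed length. The state at time $t$ is $(\log F'_t, \text{unzipped lengths})$. The claim to prove is that
\[
M_t = e^{(1/2)A_t}\times(\text{Gaussian density of the boundary field under } g_t)\times(\text{Jacobian of the remaining }\dd s\text{ variable})
\]
is a local martingale. This uses three ingredients:
\begin{itemize}[label=--]
\item Itô calculus for $\log g_t'$;
\item the covariance formula for $X$ under conformal maps (Girsanov for the Gaussian field, exactly as in Sheffield's zipper);
\item the fact that the exponent $e^{(1-\theta/\pi)a}$ changes from $e^{a/2}$ per quarter to $e^{0}$ for the half-sector.
\end{itemize}
The last point balances the $\dd s$ integration and the second copy of $\dd a$. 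In particular, the extra $\dd a$ coming from the two independent zero modes should be absorbed by $\dd s$, because $s$ scales like $e^A$. The Gamma prefactors $\Gamma(1/2)^2$ versus $\Gamma(1)$ must then match the normalization. I would check this normalization independently by computing $|\Smeas_{\pi}|$ and $|\Smeas_{\pi/2}|$ restricted to a scaling window.

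\textbf{Step 3: conclude, and the main obstacle.} Run the zipping to completion. Uniqueness of conformal welding for curves with smooth, Lipschitz-in-$\log F'$ seams (which holds since the seam is a flow line, analytic in the interior) identifies the end state with $\Smeas_\pi$ and the seam with the vertical flow line. Bijectivity of cut versus weld then gives \eqref{eq-cutting}.

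The main obstacle is Step 2. Unlike SLE-based zippers, the seam here is determined by the field rather than independent of it. So the relevant object is not a Girsanov shift by a fixed curve; it is a simultaneous evolution in which the driving function is itself a functional of $G$. I would first try to verify the martingale identity at the level of finite-dimensional truncations of \eqref{eq-gaussian}, where the flow line is a smooth function of finitely many coefficients and the identity reduces to a change-of-variables Jacobian computation. I would then pass to the limit using the uniform boundary continuity of Proposition~\ref{prop-boundary-continuity}. If the truncation does not preserve the straight-side structure exactly, the fallback is to prove the identity at the level of the Markov property of $X$ along the arc, splitting it at the endpoint of $\eta$, combined with the explicit singular factor at that endpoint.
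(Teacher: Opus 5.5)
The paper gives no proof of this statement; it is imported from \cite{ang-jt-zipper}, with only the remark that the argument there is zipper-inspired but must handle a cut that is measurable with respect to the surface. Judged on its own, your proposal has a genuine gap.

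Step~1 is a correct deterministic re-encoding, apart from two slips. Along $\eta$ one has $\arg F'(\eta(t))+\arg\eta'(t)=\pi/2$, not $\Im\log F'\equiv\pi/2$; at the vertex $\Im\log F'=0$. The new sides are straight because $\eta$ is a geodesic. Also, the covariance of $\Re G$ on the circle is half of what you wrote.

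The real problem is Step~2, which contains the entire content of the theorem and is not an argument.
\begin{itemize}
\item Once the two quarter sectors are given, zipping them by developed length is a \emph{deterministic} flow. There is no driving Brownian motion, so It\^o calculus has nothing to act on, and there is no filtration in which $M_t$ could be a martingale.
\item In Sheffield's zipper, stationarity comes from a reverse SLE driving function independent of the field, so the coordinate change is a Cameron--Martin shift determined by independent randomness. Here $g_t$ is a functional of $(G_L,G_R)$. What you actually need is a quasi-invariance (change-of-variables) formula for the nonlinear, field-dependent map $(G_L,A_L,G_R,A_R,s)\mapsto(G,A)$ on an infinite-dimensional Gaussian space.
\item The factor ``Gaussian density of the boundary field under $g_t$'' is not a well-defined object. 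Even for a deterministic reparametrization, the composed log-correlated field is absolutely continuous only for restricted (Weil--Petersson type) maps, with a Fredholm-determinant Radon--Nikodym derivative.
\item You never specify the intermediate laws of the partially zipped surfaces (angle-$\pi$ sectors slit along part of their flow line) whose invariance you would check. Identifying those laws is itself a statement of the same kind as the theorem.
\end{itemize}

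Your proposed workarounds do not close this gap either.
\begin{itemize}
\item \emph{Finite-mode truncation.} Truncating \eqref{eq-gaussian} to finitely many modes is not compatible with cutting or welding. If $G$ is a polynomial, the conformal factors of $\cQ_L,\cQ_R$ in their own uniformizations are in general not: the uniformizing maps of the pieces are transcendental. Conversely, welding two finite-mode quarter sectors does not give a finite-mode half sector. So there is no finite-dimensional identity to verify and pass to the limit.
\item \emph{Markov fallback.} The ``Markov property of $X$ along the arc'' does not exist. $X$ is log-correlated, and its restrictions to the two rough sub-arcs separated by the endpoint of $\eta$ are strongly correlated.
\end{itemize}
The conditional independence of $\cQ_L$ and $\cQ_R$ given $s$ is exactly the nontrivial cancellation the theorem asserts. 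It must come from a mechanism that exploits the cut being a flow line of the same field, and your plan names none.

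What remains is the observation that the $e^{a/2}$ tilts and the $\dd s$ integration absorb one zero mode. That is a consistent dimension count, not a proof. Your proposed independent check of the $\Gamma(1/2)^2$ normalization is also unavailable: in this paper $Z_{\pi/2}$ is obtained only through the cutting identity.
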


\subsection{The angle-\texorpdfstring{$\pi$}{pi} JT sector rerooting symmetry and partition function}
\label{subsec-rerooting}

In this section we give a description of the angle-$\pi$ JT sector with lengths $(\ell, r)$ for all $\ell, r > 0$ (rather than just a.e.\ $\ell, r>0$), prove a re-rooting symmetry, and compute the partition function $Z_\pi$. 

\begin{definition}\label{def-conditioned-sector-law}
	For $b>0$, sample $G$ as in \eqref{eq-gaussian} and set $A = \log b - \log (\int_{-1}^1
	\frac{e^{G(x)}}{\sqrt{1-x^2}}\dd x)$.
	Let $\P_b$ be the law of the flat surface
	\[
	\bigl(\D_+,|F_{\pi,G,A}'(z)|^2|dz|^2\bigr),
	\]
	with $[-1,1]$ distinguished as its base and the original vertex
	$0$ forgotten; by construction its base length is $b$.
	For $\ell,r>0$, let $\P_{\ell,r}$ be the law obtained by
	sampling a surface from $\P_{\ell+r}$ and marking the point
	at base arclength $\ell$ from the left endpoint as its vertex.
\end{definition}

A simple change of variables relates the angle-$\pi$ JT sector with vertex forgotten to $\P_b$. 
\begin{lemma}\label{lem-unpointed-sector}
	Consider the function which, given a flat surface with marked point, forgets the point. 
	The pushforward of $\Smeas_\pi$ under this function is  $\int_0^\infty b^{-1}\P_b\dd b$. 
\end{lemma}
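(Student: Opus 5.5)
The plan is a change of variables in the additive parameter $A$. For $\theta=\pi$ the measure \eqref{eq-sector-measure} is $\Gamma(1)e^{0\cdot a}\mu(\dd g)\,\dd a=\mu(\dd g)\,\dd a$, and $F_{\pi,G,A}=e^AF_{\pi,G,0}$. The surface $(\D_+,|F_{\pi,G,A}'|^2|dz|^2)$ with the vertex forgotten is therefore the surface built from $G$ with all lengths multiplied by $e^A$.

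The first step is to compute its base length. On $(-1,1)$ we have $w^{\theta/\pi-1}=1$ and $\sqrt{1-w^2}>0$, and $G$ is real there because its coefficients are real. Hence the base length is
\[
\beta(G,A)=e^A\int_{-1}^1\frac{e^{G(x)}}{\sqrt{1-x^2}}\dd x=e^AX(G),
\]
where $X(G)$ is finite and positive. Finiteness holds a.s., for instance by Proposition~\ref{prop-boundary-continuity}, since $X(G)=F_{\pi,G,0}(1)-F_{\pi,G,0}(-1)$ is the sum of the two straight side lengths.

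The second step is to change variables, for fixed $g$, from $a$ to $b=e^aX(g)$. This gives $\dd a=b^{-1}\dd b$ and $a=\log b-\log X(g)$, which is exactly the $A$ of Definition~\ref{def-conditioned-sector-law}. So the pushforward of $\mu(\dd g)\,\dd a$ under $(g,a)\mapsto(g,\beta)$ is $\mu(\dd g)\,b^{-1}\dd b$. Moreover, conditionally on $b$, the surface with the vertex forgotten is exactly the one in Definition~\ref{def-conditioned-sector-law}.

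The third step is a Fubini argument. Integrating the bounded test function $\Phi(\text{surface})$ against $\mu(\dd g)\,\dd a$ and exchanging the order of integration gives $\int_0^\infty b^{-1}\E_{\P_b}[\Phi]\,\dd b$, which is the claim.

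There is one point that needs care. The surface with the vertex forgotten should keep $[-1,1]$ distinguished as its base, so that it is the same object as in $\P_b$. This is automatic: forgetting $0$ still leaves the marked points $\pm1$, which delimit the base. The main obstacle is only the measurability and a.s.\ positivity and finiteness of $X(G)$. Positivity is clear, and finiteness follows from the boundary continuity input as above, so I expect no real difficulty.
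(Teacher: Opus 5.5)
Your proposal is correct and follows the paper's proof essentially verbatim. For fixed $g$ you substitute $b=e^a\int_{-1}^1 e^{g(x)}(1-x^2)^{-1/2}\,dx$ to turn $\mu(\dd g)\,\dd a$ into $\mu(\dd g)\,b^{-1}\dd b$, and then observe that fixing $b$ recovers exactly the choice of $A$ in Definition~\ref{def-conditioned-sector-law}. One cosmetic point: $\int_0^\infty b^{-1}\P_b\dd b$ has infinite mass, so you should phrase the Fubini step with nonnegative test functions (Tonelli) rather than merely bounded ones.
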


\begin{proof}
$\Smeas_\pi$ is defined by sampling
$(G,A)$ from $\mu(\dd g)\,\dd a$. The base length is 
\[
B
=e^A\int_{-1}^1
\frac{e^{G(u)}}{\sqrt{1-u^2}}\dd u.
\]
For each fixed $g$, changing variables from $a$ to $b$
gives $\dd a=\dd b/b$, so the joint law of $(G,B)$ is $\mu(\dd g) \, b^{-1} \dd b$.
Fixing $B=b$ amounts to sampling $G\sim\mu$ and choosing $A$
so that the base length is $b$, exactly as in
Definition~\ref{def-conditioned-sector-law}. Thus, for a sample from $\Smeas_\pi$, after forgetting the vertex,
the law of the resulting surface is 
$\int_0^\infty b^{-1} \P_b\dd b$. 
\end{proof}

The next result establishes that the vertex of an angle-$\pi$ JT sector is distributed uniformly by length measure on its base.  The proof is inspired by that of \cite[Proposition 2.18]{AHS-SLE-integrability}.

\begin{proposition}\label{prop-reroot}
	We have 
	\begin{equation}\label{eq-half-sector-partition-function}
		\Smeas_\pi(\ell,r)=\frac{1}{(\ell+r)^2}\P_{\ell, r}
		\qquad\text{for Lebesgue-almost every }\ell,r>0.
	\end{equation}
	In particular, the angle-$\pi$ JT  sector is invariant in law under resampling the root from the length measure on its base, and the partition function \eqref{eq-sector-partition-function} satisfies $Z_\pi(\ell, r) = (\ell + r)^{-2}$. 
\end{proposition}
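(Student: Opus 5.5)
The plan is to reduce the statement to a rerooting invariance of $\Smeas_\pi$ weighted by base length, and then disintegrate.

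\emph{Reduction.} By Lemma~\ref{lem-unpointed-sector}, forgetting the vertex pushes $\Smeas_\pi$ forward to $\int_0^\infty b^{-1}\P_b\,\dd b$. Suppose we show that, conditionally on the unpointed surface, the vertex is distributed according to normalized length measure on the base. Then the joint law of (surface, vertex position $\ell$) is
\[
b^{-1}\P_b\,\dd b\cdot b^{-1}\mathbf 1_{(0,b)}(\ell)\,\dd\ell .
\]
Changing variables $(b,\ell)\mapsto(\ell,r=b-\ell)$, which has unit Jacobian, gives $(\ell+r)^{-2}\P_{\ell,r}\,\dd\ell\,\dd r$. This yields \eqref{eq-half-sector-partition-function} for a.e.\ $\ell,r$, and $Z_\pi(\ell,r)=(\ell+r)^{-2}$ follows since $\P_{\ell,r}$ is a probability measure.

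\emph{Rerooting identity.} Uniformity of the vertex is equivalent to the following identity of measures on (surface, vertex, extra marked base point):
\[
\Smeas_\pi(\dd\cS)\,\mathrm{len}_{\mathrm{base}}(\dd x)
\]
is invariant under swapping the vertex with the extra point. Indeed, disintegrating both sides over the unpointed surface then forces the vertex law to be proportional to length measure. Concretely, for a test function $f$ the left side is
\[
\int\!\!\int_{-1}^1 f\bigl(F_{\pi,g,a},0,x\bigr)\,e^{a}\frac{e^{g(x)}}{\sqrt{1-x^2}}\,\dd x\,\mu(\dd g)\,\dd a.
\]

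\emph{Girsanov step.} Since $\E[G(z)G(x)]=-\log(1-zx)$ for real $x$, and $\E e^{G(x)}=(1-x^2)^{-1/2}$, weighting $\mu$ by $e^{G(x)}$ amounts to multiplying by the constant $(1-x^2)^{-1/2}$ and shifting $G\mapsto G-\log(1-zx)$. The shifted primitive then has derivative proportional to
\[
e^{a}e^{G(z)}(1-zx)^{-1}(1-z^2)^{-1/2}.
\]

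\emph{M\"obius step.} Next, precompose with the automorphism $\varphi_x(z)=(z+x)/(1+xz)$ of $\D_+$, which fixes $\pm1$ and sends $0\mapsto x$; after the precomposition the old vertex $0$ sits at $\varphi_x^{-1}(0)=-x$. I would check two facts.
\begin{itemize}
\item The field $G\circ\varphi_x-G(x)$ again has law $\mu$. This holds because the covariance $-\log(1-z\bar w)$, restricted to real-coefficient fields, transforms under real M\"obius maps of $\D$ only by additive $\log|\varphi'|$-type terms, which are exactly cancelled by subtracting the value at $x$.
\item The deterministic factors $(1-zx)^{-1}$, $(1-z^2)^{-1/2}$ and $\varphi_x'$ combine, via $1-\varphi_x(z)^2=(1-x^2)(1-z^2)/(1+xz)^2$, into $(1-z^2)^{-1/2}$ times a constant depending only on $x$.
\end{itemize}
That constant, together with $e^{G(x)}$, is absorbed into $a$. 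This is harmless because at $\theta=\pi$ the weight $e^{(1-\theta/\pi)a}=1$, so $\dd a$ is translation invariant. The conclusion is that the right side equals the same expression with the roles of $0$ and $x$ exchanged, which is the desired swap invariance.

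\emph{Main obstacle.} I expect the delicate part to be this Girsanov/M\"obius bookkeeping. Three things need care: the exact covariance identity for the real-coefficient field, tracking the branch of the square root and of $w^{\theta/\pi-1}$ (trivial at $\theta=\pi$), and justifying Girsanov for a field that is only a.s.\ defined on the open disk. For the last point I would use truncated series $\sum_{n\le N}$ and pass to the limit, invoking Proposition~\ref{prop-boundary-continuity} to make the base-length functional $\int_{-1}^1 e^{G}(1-x^2)^{-1/2}\,\dd x$ well defined and continuous in the limit.
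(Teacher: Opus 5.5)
Your proposal is correct and is essentially the paper's argument in disk rather than strip coordinates. Your map $\varphi_x$ is the paper's strip translation by $t$ with $x=\tanh t$. The invariance $G\circ\varphi_x-G(x)\overset{d}{=}G$ is the paper's stationarity $X(\cdot+t)-X(t)\overset{d}{=}X$. Swap-invariance of $\Smeas_\pi(\dd\cS)\,\mathrm{len}(\dd x)$ is the symmetry of $e^{\phi(0)+\phi(t)}\cN(\dd\phi)\,\dd t$ that follows from translation invariance of $\cN$. After that, the disintegration via Lemma~\ref{lem-unpointed-sector} is identical.

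One piece of bookkeeping is left implicit. After you absorb $G(x)-\tfrac12\log(1-x^2)$ into $a$, the weight still contains a factor $e^{-G(x)}$. You need to note that $e^{-G(x)}=e^{h(-x)}$ for the new field $h=G\circ\varphi_x-G(x)$. This identity is exactly what makes the weight the base-length density at the old vertex $-x$.
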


\begin{proof}
	\noindent\textbf{Step 1: Parametrization in $\mathbb S$.} We work in the strip where computations are easier due to translation symmetry. 
	Set 
	\[
	\mathbb S=\{z\in\C:0<\Im z<\pi/4\},
	\qquad
	X(z)=G(\tanh z),
	\qquad
	q(z)=\log\cosh z,
	\]
	where the branch of $\log\cosh z$ is chosen to be real on $\R$.
	Let $\nu$ be the law of the centered Gaussian field $X$. The covariance can be computed using that of $G$, but for our purposes all we need is that for $z\in\mathbb S$ and $t\in\R$,
	\[
	\E[X(z)X(t)]
	=q(z)+q(t)-q(z-t),
	\qquad
	\E[X(t)^2]=2q(t).
	\]
	Using this covariance (or by conformal invariance of the Gaussian free field), it is easy to check that 
	\begin{equation}\label{eq-strip-covariance}
		X(\cdot+t)-X(t)\overset{d}=X
		\qquad\text{for every }t\in\R.
	\end{equation}
	
	Given a pair $(X, A)$, we can define a field \[
			\Phi(z)=X(z)-q(z)+A.\]
	Let $\cM$ and $\cN$ be the laws of $\Phi$
	when $(X,A)$ is sampled from
	$\nu(\dd x)\,\dd a$
and 
	$\nu(\dd x)\,e^{-a}\dd a$ 
	respectively.
	
	The map $z\mapsto\tanh z$ conformally maps $\mathbb S$
	onto $\D_+$, and
	\[
	F(z):=\int_0^z e^{\Phi(w)}\dd w
	=F_{\pi,G,A}(\tanh z).
	\]
	Therefore, if $\Phi$ is sampled from $\cM$, then
	the flat surface
	$\bigl(\mathbb S,e^{2\Re\Phi(z)}|\dd z|^2,0\bigr)$  has law $\Smeas_\pi$.

	\smallskip
	\noindent\textbf{Step 2: Translation invariance of $\cN$.}
	Fix $t\in\R$ and write $T_t\Phi=\Phi(\cdot+t)$.
	We will prove that
	\begin{equation}\label{eq-strip-translation}
		(T_t)_*\cN=\cN.
	\end{equation}
	Given $(X, A)$ and $\Phi(z) = X(z) - q(z) + A$, we have $\Phi(z+t) = Y(z) - q(z) + C$, where 
	\[
		Y(z)=X(z+t)-X(t)-q(z+t)+q(z)+q(t),\qquad 
		C=A+X(t)-q(t).
	\]
	We must show that if the law of $(X,A)$ is 
	$\nu(\dd x)\,e^{-a}\dd a$, then the law of $(Y,C)$ is 
	$\nu(\dd y)\,e^{-c}\dd c$.
	
	For fixed $x$, changing variables from $a$ to
	$c=a+x(t)-q(t)$ gives the measure
	\[ \nu_t (\dd x) \, e^{-c} \dd c, \qquad \text{ where } \nu_t (\dd x) := 
	e^{x(t)-q(t)}\nu(\dd x).
	\]
	We now use Girsanov's theorem. We have $\E [X(z) X(t)] = q(z)+q(t)-q(z-t)$. Thus, the $\nu_t$-law of $X$ is the law of $X(z) + q(z) +q(t)-q(z-t)$ where $X \sim \nu$. Substituting this into the formula for $Y$, the $\nu_t$-law of $Y$ is the law of 
	\[(X(z+t) + q(z+t) + q(t) - q((z+t)-t)) - (X(t) +q(t) + q(t) - q(0))   - q(z+t) + q(z) + q(t)\]
	where $X \sim \nu$. Cancelling terms, this large expression simplifies to $X(z + t) - X(t)$,  which has law $\nu$ by \eqref{eq-strip-covariance}. We conclude that the $\nu_t$-law of $Y$ is $\nu$. This completes the proof of \eqref{eq-strip-translation}. 
	
	\smallskip
	\noindent\textbf{Step 3: Rerooting symmetry of $\Smeas_\pi$.}
	Since $X(0)=q(0)=0$, we have
	$\cM(\dd\phi)
	=e^{\phi(0)}\cN(\dd\phi)$.
	Write 
	$L(\Phi)=\int_\R e^{\Phi(t)}\dd t$ 
	for the total base length, so $L(\Phi) \in (0,\infty)$.
	For every nonnegative measurable functional $f$,
	translation invariance of $\cN$ gives
	\begin{align*}
		\int 
		\int_\R 	\frac{e^{\phi(t)}}{L(\phi)}f(T_t\phi)\dd t \, &\cM(\dd\phi)
		=
		\int_\R\int 
		\frac{e^{\phi(0)+\phi(t)}}{L(\phi)}
		f(T_t\phi)\, \cN(\dd\phi)\dd t
		=
		\int_\R\int 
		\frac{e^{\phi(-t)+\phi(0)}}{L(\phi)}
		f(\phi)\, \cN(\dd\phi)\dd t \\
		&=
				\int\int_\R 
		\frac{e^{\phi(-t)}}{L(\phi)}
		\dd t \, f(\phi) \cM(\dd\phi)
		=
		\int f(\phi)\cM(\dd\phi).
	\end{align*}
	The identity $\int 
	\int_\R 	\frac{e^{\phi(t)}}{L(\phi)}f(T_t\phi)\dd t \, \cM(\dd\phi) = \int f(\phi)\cM(\dd\phi)$ says that if you sample $\Phi \sim \cM$ and reroot at a point chosen uniformly from the base length measure, the law of the resulting field is still $\cM$. This means $\Smeas_\pi$ is invariant in law under resampling the vertex
	from the base length measure.
	
	\smallskip 
	\noindent \textbf{Step 4: Conclusion.}
By Lemma~\ref{lem-unpointed-sector}, for a sample from $\Smeas_\pi$ with vertex forgotten,
the law of the resulting surface is 
$\int_0^\infty b^{-1} \P_b\dd b$. By Step 3, we can restore the vertex by sampling it uniformly from base length measure, hence 
	\[
	\Smeas_\pi
	=\int_0^\infty\int_0^b
	\P_{\ell,b-\ell}\,
	\frac{\dd\ell\,\dd b}{b^2}
	=\int_0^\infty\int_0^\infty
	\frac{\P_{\ell,r}}{(\ell+r)^2}
	\,\dd\ell\,\dd r. \qedhere
	\]
\end{proof}

\subsection{The angle-\texorpdfstring{$\pi/2$}{pi/2} partition function}
\label{subsec-quarter-partition-function}
In Proposition~\ref{prop-quarter-identification} we identify $Z_{\pi/2}$ with the entrance density of a Cauchy excursion. We state the consequences we need downstream in Corollary~\ref{cor-quarter-partition-function}. 

Let $p_t^+(x,y)$ be the transition density of the standard symmetric
Cauchy process killed on leaving
$(0,\infty)$. Let $e_t(x)$ be its excursion entrance density,
normalized such that
\begin{equation}\label{eq-entrance-mass}
	\int_0^\infty e_t(x)\dd x=(\pi t)^{-1/2}.
\end{equation}

\begin{proposition}
	\label{prop-quarter-identification}
For all $\ell, r > 0$ we have 
	$Z_{\pi/2}(\ell,r)=\sqrt\pi\,e_r(\ell)$.
\end{proposition}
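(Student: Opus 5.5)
Combining Theorem~\ref{thm-sector-cutting} with Proposition~\ref{prop-reroot} gives a convolution identity for $Z_{\pi/2}$. I will show that the Cauchy entrance densities satisfy the same identity, and then use scaling and a uniqueness argument to identify the two.

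\textbf{Step 1: the convolution identity.} Take total masses in \eqref{eq-cutting}. For a.e.\ $\ell,r>0$,
\[
\int_0^\infty Z_{\pi/2}(\ell,s)\,Z_{\pi/2}(s,r)\dd s=Z_\pi(\ell,r)=(\ell+r)^{-2}.
\]

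\textbf{Step 2: scaling.} Replacing $A$ by $A+\log\lambda$ in Definition~\ref{def-jt-sector} multiplies all lengths by $\lambda$. Under \eqref{eq-sector-measure} with $\theta=\pi/2$, this shift multiplies the measure by $\lambda^{-1/2}$, since the density is $e^{a/2}$. Disintegrating in $(\ell,r)$ then gives
\[
Z_{\pi/2}(\lambda\ell,\lambda r)=\lambda^{-5/2}Z_{\pi/2}(\ell,r).
\]
Hence $Z_{\pi/2}(\ell,r)=r^{-5/2}\phi(\ell/r)$ for a single function $\phi$. I also need $Z_{\pi/2}$ to be defined for all $(\ell,r)$ and continuous, not just a.e. For this I would express $Z_{\pi/2}$ as an explicit Gaussian expectation: fix $G$, solve for $A$ from the side length, and pass to the density of the ratio of the two side lengths. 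Proposition~\ref{prop-boundary-continuity} gives finiteness of these side lengths.

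\textbf{Step 3: the Cauchy side.} The Cauchy process has index $1$, so $e_r(\ell)=r^{-3/2}\psi(\ell/r)$; this is consistent with \eqref{eq-entrance-mass}. The Chapman--Kolmogorov relation for excursion entrance laws, $\int e_s(x)p^+_t(x,y)\dd x=e_{s+t}(y)$, alone does not produce Step 1, since Step 1 integrates over a spatial variable $s$ appearing in both slots. I therefore plan to use the explicit formula for the entrance density of the killed symmetric Cauchy process on the half-line (via Wiener--Hopf: the ladder height exponent is $\sqrt\lambda$). From it I would compute the Mellin transform of $\psi$ and verify
\[
\pi\int_0^\infty e_s(\ell)e_r(s)\dd s=(\ell+r)^{-2}.
\]
In Mellin variables, with $\ell/s$ and $s/r$ as the scale variables, this identity becomes a product formula: $\widehat\phi(z)\widehat\phi(w)$ on a suitable line must equal the Mellin transform of $(1+x)^{-2}$, which is $\Gamma(z)\Gamma(2-z)$.

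\textbf{Step 4: uniqueness.} Substituting the scaling form into Step 1 yields the same Mellin product equation for $\phi$. A product $\widehat\phi(z)\widehat\phi(\cdot)$ with prescribed value determines $\widehat\phi$ up to a sign and an analytic factor $c(z)$ satisfying $c(z)c(\text{dual }z)=1$. I would pin this down using positivity of $\phi$ and growth bounds in the strip, Carlson-type, or more simply by an additional relation from the rotation or reflection symmetry $Z_{\pi/2}(\ell,r)$ versus $Z_{\pi/2}(r,\ell)$.

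I expect this uniqueness step, together with getting the Mellin transforms right, to be the main obstacle. Equivalently, the difficulty is that Step 1 is a single quadratic equation and may not characterize $\phi$ without further input. As a fallback, I would compute $\widehat\phi$ directly from the Gaussian representation, using the strip parametrization from the proof of Proposition~\ref{prop-reroot} and the Girsanov shift by $e^{X(t)}$ for the side-length moments, and then match it with the Cauchy computation. The constant $\sqrt\pi$ comes from the normalization \eqref{eq-entrance-mass}.
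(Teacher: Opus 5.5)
Your architecture matches the paper's: a convolution identity from \eqref{eq-cutting} and Proposition~\ref{prop-reroot}, homogeneity, a Mellin uniqueness argument, and a check that $\sqrt\pi\,e_r(\ell)$ satisfies the same relations. However, the proposal has a concrete error and leaves its two decisive steps unresolved.

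\textbf{Step~2 is wrong as written.} Pushing $e^{a/2}\dd a$ forward under $a\mapsto a+\log\lambda$ shows that scaling all lengths by $\lambda$ sends $\Smeas_{\pi/2}$ to $\lambda^{-1/2}\Smeas_{\pi/2}$. When you compare densities in $(\ell,r)$, the Jacobian $\lambda^{-2}$ lands on the other side. This gives $\lambda^{-2}Z_{\pi/2}(\ell/\lambda,r/\lambda)=\lambda^{-1/2}Z_{\pi/2}(\ell,r)$, i.e.\ $Z_{\pi/2}(\lambda\ell,\lambda r)=\lambda^{-3/2}Z_{\pi/2}(\ell,r)$. With your exponent $-5/2$, the left side of Step~1 would be homogeneous of degree $-4$, while $(\ell+r)^{-2}$ has degree $-2$. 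It would also contradict your own $e_r(\ell)=r^{-3/2}\psi(\ell/r)$. So Steps~2--4 are mutually inconsistent until this is fixed. Separately, you need neither continuity of $Z_{\pi/2}$ nor a Gaussian formula for it. It is a density, so uniqueness up to a.e.\ equality suffices, and $\sqrt\pi\,e_r(\ell)$ then serves as the version.

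\textbf{Cauchy side: the missing idea is the space--time symmetry $e_t(x)=e_x(t)$.} By \cite{bertoin-levy}, $\iint e^{-\alpha t-\beta x}e_t(x)\dd x\dd t=\exp\bigl(-\frac1\pi\iint\frac{e^{-t}-e^{-\alpha t-\beta x}}{t^2+x^2}\dd x\dd t\bigr)$. This is symmetric in $(\alpha,\beta)$ because $t^{-1}\P(X_t\in\dd x)=\dd x/(\pi(t^2+x^2))$. The symmetry turns your mixed integral $\int e_s(\ell)e_r(s)\dd s$ into $\int e_\ell(s)e_r(s)\dd s$. Chapman--Kolmogorov together with $p_t^+(x,y)=p_t^+(y,x)$ then gives $\int e_{a+b}(s)e_r(s)\dd s=\int e_a(x)e_{b+r}(x)\dd x$, so the integral depends only on $\ell+r$. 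Scaling makes it $C(\ell+r)^{-2}$, and integrating out $r$ using \eqref{eq-entrance-mass} gives $C=1/\pi$. No explicit Wiener--Hopf formula or Mellin computation is needed.

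\textbf{Uniqueness: the quadratic identity does characterize the solution.}
\begin{itemize}
\item Write $Z_{\pi/2}(\ell,r)=\ell^{-3/2}n(r/\ell)$. Step~1 becomes $\cM(z)\cM(z-\tfrac12)=\Gamma(z)\Gamma(2-z)$ on $0<\Re z<2$, where $\cM$ is the Mellin transform of $n$.
\item If $\cM_1,\cM_2$ come from two nonnegative solutions, then $R=\cM_1/\cM_2$ satisfies $R(z)R(z-\tfrac12)=1$. Hence $R$ is $1$-periodic and extends to a zero-free entire function.
\item Nonnegativity gives $|\cM_j(x+iy)|\le\cM_j(x)$. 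Combined with $\Gamma(z)\Gamma(2-z)=\pi(1-z)/\sin\pi z$, this yields $|R(x+iy)|\le Ce^{\pi|y|}$.
\item Hadamard's theorem then gives $R=e^{az+b}$. The functional equation forces $a=0$, and positivity forces $R\equiv1$.
\end{itemize}
This growth bound is indispensable. Your alternative of pinning things down via the reflection symmetry cannot work. The function $R(z)=\exp(c\sin 2\pi z)$ satisfies $R(z)R(z-\tfrac12)=1$ and the symmetry constraint $R(z)=R(\tfrac32-z)$, and it is positive on $\R$. Only its doubly exponential growth on vertical lines rules it out.
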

Essentially en route to showing Proposition~\ref{prop-quarter-identification}, we will obtain the following. 

\begin{corollary}\label{cor-quarter-partition-function}
We have $	\int_0^\infty Z_{\pi/2}(\ell,r)\dd\ell =r^{-1/2}$ and 
\begin{equation}\label{eq-quarter-identities}
	\int_0^\infty Z_{\pi/2}(\ell,s)Z_{\pi/2}(r,s)\dd s
	=\frac1{(\ell+r)^2},
	\qquad
	\int_0^\infty p_t^+(x,y)Z_{\pi/2}(y,r)\dd y
	=Z_{\pi/2}(x,t+r).
\end{equation}
\end{corollary}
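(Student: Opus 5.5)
The plan is to derive all three identities from Proposition~\ref{prop-quarter-identification}, which gives $Z_{\pi/2}(\ell,r)=\sqrt\pi\,e_r(\ell)$, together with standard facts about the Cauchy excursion entrance law and the cutting identity of Theorem~\ref{thm-sector-cutting}.

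\emph{First identity.} This is immediate from the normalization \eqref{eq-entrance-mass}:
\[
\int_0^\infty Z_{\pi/2}(\ell,r)\dd\ell=\sqrt\pi\int_0^\infty e_r(\ell)\dd\ell=\sqrt\pi\,(\pi r)^{-1/2}=r^{-1/2}.
\]

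\emph{Third identity.} I would use the entrance-law property of the excursion entrance density of the killed process: $\int_0^\infty e_r(y)\,p_t^+(y,x)\dd y=e_{r+t}(x)$. This is the Markov property of the excursion measure at time $r$. The killed symmetric Cauchy process is self-dual with respect to Lebesgue measure, because its generator is a symmetric operator restricted to $(0,\infty)$. Hence $p_t^+(x,y)=p_t^+(y,x)$. Substituting this symmetry and multiplying by $\sqrt\pi$ gives $\int_0^\infty p_t^+(x,y)Z_{\pi/2}(y,r)\dd y=Z_{\pi/2}(x,t+r)$.

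\emph{Second identity.} I would take total masses in \eqref{eq-cutting}. Theorem~\ref{thm-sector-cutting} gives, for a.e.\ $\ell,r$,
\[
Z_\pi(\ell,r)=\int_0^\infty Z_{\pi/2}(\ell,s)Z_{\pi/2}(s,r)\dd s,
\]
and Proposition~\ref{prop-reroot} identifies the left side as $(\ell+r)^{-2}$. It remains to replace $Z_{\pi/2}(s,r)$ by $Z_{\pi/2}(r,s)$, i.e.\ to prove the symmetry $Z_{\pi/2}(s,r)=Z_{\pi/2}(r,s)$. I would get this from reflection invariance of the angle-$\pi/2$ sector. Since the $G_n$ are real, $\overline{G(\bar w)}=G(w)$. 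Consider the anticonformal reflection $z\mapsto -\bar z$ of $\D_+$, followed by a reversal of orientation. One checks from \eqref{eq-primitive} that this maps $\Smeas_{\pi/2}$ to itself while swapping the left and right straight sides; the law $\mu$ is invariant under $g\mapsto \overline{g(-\bar{\cdot})}$ composed with the sign change $G_n\mapsto(-1)^nG_n$, and that sign change preserves $\mu$. Alternatively, I could avoid geometry entirely: $e_r(\ell)$ has the explicit form obtained in the proof of Proposition~\ref{prop-quarter-identification}, which would presumably be homogeneous of degree $-3/2$ and symmetric, and the symmetry could be read off from it.

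\emph{Main obstacle.} The delicate point is upgrading the second identity from a.e.\ $(\ell,r)$ to all $(\ell,r)$. I would handle this by continuity: the right-hand side is continuous in $(\ell,r)$ by Cauchy scaling, $e_s(\ell)=s^{-3/2}e_1(\ell/s)$ with $e_1$ continuous and integrable, plus dominated convergence. The left-hand side is explicit and continuous. The reflection symmetry is routine but needs care with branches of $w^{-1/2}$ and $\sqrt{1-w^2}$.
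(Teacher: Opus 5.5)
Your first and third identities follow the paper's argument. The paper proves the corollary by substituting $Z_{\pi/2}(\ell,r)=\sqrt\pi\,e_r(\ell)$ into \eqref{eq-entrance-mass} and Lemma~\ref{lem-cauchy-gram}. The third identity of that lemma is the entrance-law relation written with the symmetric kernel, which is what your self-duality remark produces.

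For the second identity you take a longer route. You re-derive the Gram identity from Theorem~\ref{thm-sector-cutting}, Proposition~\ref{prop-reroot} and reflection symmetry. This is exactly how the paper checks the third hypothesis of Lemma~\ref{lem-mellin} inside the proof of Proposition~\ref{prop-quarter-identification}. You then have to upgrade the identity from a.e.\ $(\ell,r)$ to all $(\ell,r)$.

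The paper avoids both steps. By the first two identities of Lemma~\ref{lem-cauchy-gram}, which hold for all positive arguments,
$\int_0^\infty Z_{\pi/2}(\ell,s)Z_{\pi/2}(r,s)\dd s=\pi\int_0^\infty e_s(\ell)e_s(r)\dd s=\pi\int_0^\infty e_\ell(s)e_r(s)\dd s=(\ell+r)^{-2}$.
The symmetry you need is just $e_t(x)=e_x(t)$. There is no explicit formula for $e_r(\ell)$ in the proof of Proposition~\ref{prop-quarter-identification} from which to read it off; that proof goes through Mellin uniqueness.

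Your route is still valid, but the continuity step is under-justified as written. Integrability of $e_1$ does not give a dominating function for $s\mapsto e_s(\ell')e_s(r')$ uniformly for $(\ell',r')$ near $(\ell,r)$. It can be repaired as follows.
\begin{itemize}
\item Write $e_1(x)=\int_0^\infty p^+_{1/2}(x,y)e_{1/2}(y)\dd y$. Since $p^+_t\le(\pi t)^{-1}$, this shows $e_1$ is bounded, and continuous by dominated convergence.
\item Symmetry and scaling give $e_1(u)=e_u(1)=u^{-3/2}e_1(1/u)$, hence $e_1(u)\le Cu^{-3/2}$.
\item After the substitution $u=1/s$, the integrand becomes $u\,e_1(\ell'u)e_1(r'u)$, and these bounds give an integrable envelope that is uniform on compacts.
\end{itemize}
So the ``main obstacle'' you identify is self-inflicted. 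It does not arise on the paper's route, and your detour adds only a geometric interpretation of an identity already available for all arguments.
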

To prove Proposition~\ref{prop-quarter-identification}, we will show that $Z_{\pi/2}$ is characterized by several properties (Lemma~\ref{lem-mellin}), and that $\sqrt\pi\,e_r(\ell)$ also satisfies these properties (Lemma~\ref{lem-cauchy-gram}). 

\begin{lemma}\label{lem-mellin}
	Up to equality almost everywhere, there is at most one nonnegative
	measurable function $N$ on $(0,\infty)^2$ satisfying
	\[
	N(\ell,r)=N(r,\ell),\qquad
	N(c\ell,cr)=c^{-3/2}N(\ell,r),\qquad
	\int_0^\infty N(\ell,s)N(r,s)\dd s
	=\frac1{(\ell+r)^2}
	\]
	for every $c>0$ and almost every
	$(\ell,r)\in(0,\infty)^2$.
\end{lemma}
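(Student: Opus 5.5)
Because of the scaling relation, $N$ is determined by the one-variable profile $\phi(x)=N(x,1)$: for a.e.\ $(\ell,r)$ we have $N(\ell,r)=r^{-3/2}\phi(\ell/r)$. (Since scaling holds only for a.e.\ $(\ell,r)$ and every $c$, a Fubini argument gives a single representative $\phi$ with this property a.e.) The plan is to turn the convolution identity into a multiplicative convolution on $(0,\infty)$ and then use the Mellin transform.

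\textbf{Reduction to a multiplicative convolution.} Substituting into the third identity gives
\[
\int_0^\infty s^{-3}\phi(\ell/s)\phi(r/s)\dd s=(\ell+r)^{-2}.
\]
Set $r=1$ and change variables $u=1/s$:
\[
\int_0^\infty u\,\phi(\ell u)\phi(u)\dd u=(1+\ell)^{-2}.
\]
Symmetry says $N(\ell,r)=N(r,\ell)$, which becomes $\phi(1/x)=x^{3/2}\phi(x)$. Using this, the left side can be rewritten as a genuine Mellin convolution of $\phi$ with itself, up to a power weight. Taking Mellin transforms $\wh\phi(z)=\int_0^\infty x^{z-1}\phi(x)\dd x$ along a vertical line $\Re z=\sigma$ where everything converges absolutely, we expect an identity of the form
\[
\wh\phi(z)\,\wh\phi(c-z)=\frac{\pi(1-z)}{\sin\pi z},
\]
with the right side being the Mellin transform of $(1+x)^{-2}$, namely $\Gamma(z)\Gamma(2-z)$. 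The symmetry relation should force $\wh\phi(c-z)=\wh\phi(\bar z)$-type conjugation on the critical line (using $\phi\ge0$ real, so $\wh\phi(\bar z)=\overline{\wh\phi(z)}$). Hence on the symmetric line
\[
|\wh\phi(\sigma_0+it)|^2=\Gamma(\sigma')\Gamma(2-\sigma')\big|_{\sigma'=\cdots}
\]
is an explicit known function.

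\textbf{Main obstacle.} The modulus alone does not determine $\wh\phi$, so the crux is recovering the phase. The key is positivity. First, since $\phi\ge0$, finiteness of the integral identity at different scalings gives convergence of $\wh\phi$ on an open strip $a<\Re z<b$ symmetric about $\sigma_0$; this follows because $(1+\ell)^{-2}$ decays at both ends, and Tonelli bounds moments of $\phi$. On this strip $\wh\phi$ is holomorphic and $\wh\phi(z)\wh\phi(2\sigma_0-z)=M(z)$, where $M$ is explicit, meromorphic and zero-free in the strip.

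I would then argue as follows. The function $\log\wh\phi$ is defined on the strip because $\wh\phi$ has no zeros there (it divides a zero-free function). Its real part on the central line is fixed. Writing $\wh\phi=\sqrt{M}\,e^{g}$ with $g$ holomorphic, we get $g(z)+g(2\sigma_0-z)=0$ and $g$ purely imaginary on the line. By Schwarz reflection, $g$ is odd about $\sigma_0$. A Phragm\'en--Lindel\"of bound then forces $g$ to be linear: use growth control on $\wh\phi$ from $|\wh\phi(z)|\le\wh\phi(\Re z)$, and a lower bound from the product identity. Thus $g(z)=i\beta(z-\sigma_0)$, i.e.\ $\phi$ is determined up to the multiplicative change $\phi(x)\mapsto x^{i\beta}\phi(x)$. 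Since this change spoils nonnegativity unless $\beta=0$ (a nonnegative function times $x^{i\beta}$ is not nonnegative), uniqueness follows by Mellin inversion.

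I expect the growth estimate for the Phragm\'en--Lindel\"of step to be the most delicate part. If it fails, a fallback is the following. Two nonnegative solutions $\phi_1,\phi_2$ have Mellin ratio $\wh\phi_1/\wh\phi_2$ of modulus one on the line and bounded on the strip. The upper bound comes from positivity; the lower bound comes from $|\wh\phi_2(z)|=|M(z)|/|\wh\phi_2(2\sigma_0-z)|\ge|M(z)|/\wh\phi_2(2\sigma_0-\Re z)$. This ratio can then be handled with the same reflection argument.
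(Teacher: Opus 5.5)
There is a genuine gap. The symmetry $N(\ell,r)=N(r,\ell)$ never does any real work in your argument, and without it uniqueness is false. Taking the Mellin transform in $\ell$ of $\int_0^\infty u\,\phi(\ell u)\phi(u)\,du=(1+\ell)^{-2}$ gives the reflection equation $\wh\phi(z)\wh\phi(2-z)=\Gamma(z)\Gamma(2-z)$ on $0<\Re z<2$. The modulus identity on $\Re z=1$ then follows from $\phi$ being real alone.

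This equation has many nonnegative solutions. Take $\phi(x)=e^{-x}$ and $\phi(x)=x^{-2}e^{-1/x}$, with $\wh\phi(z)=\Gamma(z)$ and $\wh\phi(z)=\Gamma(2-z)$. Equivalently, $N(\ell,r)=r^{-3/2}e^{-\ell/r}$ and $N(\ell,r)=r^{1/2}\ell^{-2}e^{-r/\ell}$ both satisfy the scaling relation and the integral identity, though neither is symmetric. Their ratio $h(z)=\Gamma(2-z)/\Gamma(z)$ satisfies $h(z)h(2-z)=1$, is unimodular on $\Re z=1$, is positive on $(0,2)$, and has polynomial growth in vertical strips. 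Yet $\log h$ is not linear.

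So your Phragm\'en--Lindel\"of step cannot be right. Oddness about $\sigma_0$, vanishing real part on the central line, and any reasonable growth bound still leave an infinite-dimensional family. For example, $g=\epsilon\sinh(\lambda(z-\sigma_0))$ with $\lambda$ real is even bounded on the strip. Your fallback fails for the same reason; moreover, your two bounds only give $|h(x+iy)|\lesssim e^{\pi|y|}$, not boundedness.

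Even granting linearity, the endpoint is misidentified. A linear $g$ with $\Re g=0$ on $\Re z=\sigma_0$ has the form $\beta(z-\sigma_0)$ with $\beta$ real. Then $e^{\beta(z-1)}\wh\phi(z)$ is the Mellin transform of $e^{-\beta}\phi(e^{-\beta}x)$, which is nonnegative and again satisfies the integral identity. Multiplying $\phi$ by $x^{i\beta}$ corresponds instead to the shift $\wh\phi(z+i\beta)$. Positivity therefore cannot close the argument.

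The missing idea is to convert the symmetry into $\wh\phi(z)=\wh\phi(\tfrac32-z)$, using $\phi(1/x)=x^{3/2}\phi(x)$. Combined with the reflection equation, this yields the \emph{shift} equation $\wh\phi(z)\wh\phi(z-\tfrac12)=\Gamma(z)\Gamma(2-z)$. This is exactly what the paper obtains directly by writing $N(r,s)=s^{-3/2}n(r/s)$. It also shows $\wh\phi$ is holomorphic and zero-free on the wider strip $-\tfrac12<\Re z<2$.

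For two solutions, the ratio $R$ then satisfies $R(z)R(z-\tfrac12)=1$, hence $R(z+1)=R(z)$, so $R$ extends to a zero-free entire function. On a period strip, $|R(x+iy)|\le Ce^{\pi|y|}$, using $|\wh\phi_j(z)|\le\wh\phi_j(\Re z)$ and $\Gamma(z)\Gamma(2-z)=\pi(1-z)/\sin\pi z$. So $R$ has order at most one, and Hadamard gives $R=e^{az+b}$. The functional equation together with positivity on the real axis then forces $R\equiv1$. It is this periodicity, coming from the symmetry, that supplies the rigidity that no estimate on a single strip can.
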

\begin{proof}
	Write $N(\ell, r) = \ell^{-3/2}n(r/\ell)$, and let
		$\cM(z)=\int_0^\infty u^{z-1}n(u)\dd u$
		be the Mellin transform of $n$.
		
		We have 
		$N(1,s)N(r,s)=n(s)s^{-3/2}n(r/s)$, so $(1+r)^{-2} = \int_0^\infty n(s) s^{-3/2} n(r/s) \dd s$. For $z \in (0,2)$, we multiply by $r^{z-1}$ and integrate over $r$:
\[
			\int_0^\infty \frac{r^{z-1}}{(1+r)^2}\dd r
			=\int_0^\infty n(s)s^{-3/2}
			\left(\int_0^\infty r^{z-1}n(r/s)\dd r\right)\dd s
			=\cM(z)\int_0^\infty s^{z-3/2}n(s)\dd s
			=\cM(z)\cM(z-\tfrac12).
\]
In the second equality, we used the substitution
$v=r/s$. 
		Evaluating the left-hand side by the beta integral yields
		\begin{equation}\label{eq-mellin}
			\cM(z)\cM(z-\tfrac12)
			=\Gamma(z)\Gamma(2-z),
			\qquad 0<z<2.
		\end{equation}
		In particular, $\cM(z) = \int_0^\infty u^{z-1}n(u)\dd u$ is convergent for $-1/2 < z  <2$,
		and hence absolutely convergent for $z \in \C$ with $-1/2  <\mathrm{Re}\, z < 2$.  Thus $\cM$ is holomorphic on this strip, with $|\cM (z)| \leq \cM(\mathrm{Re}\, z)$. By analytic continuation \eqref{eq-mellin} holds for $0 < \mathrm{Re} \, z < 2$. Note that $\cM$ has no zeros on $0  <\mathrm{Re} \, z < 2$ since $\Gamma(z) \Gamma(2-z)$ has no zeros there and $\cM(z-1/2)$ has no poles there. 
		
		We now show that \eqref{eq-mellin} uniquely identifies $\cM$. Consider two functions $\cM_1, \cM_2$ satisfying \eqref{eq-mellin}, and let   $R(z)=\cM_1(z)/\cM_2(z)$, so $R$ is holomorphic on $0 < \mathrm{Re}\, z < 2$.
		Equation \eqref{eq-mellin} gives
		$R(z)R(z-\tfrac12)=1$ and hence $R(z+1)=R(z)$ for $0 < \mathrm{Re}\, z < 1$. Thus we can holomorphically extend $R$ to $\C$, where it is nonvanishing.  
		
Now, consider $1/2 \leq x \leq 3/2$. We have 
\[R(x + iy) = \frac{\cM_1(x+iy) }{ \cM_2(x+iy)} = \frac{\cM_1(x+iy) \cM_2(x- \frac12+iy) }{\Gamma(x+iy)(\Gamma(2-x-iy))},\]
so using 
$|\cM_j(x+iy)|\le\cM_j(x)$ and $\Gamma(z)\Gamma(2-z)=\pi(1-z)/\sin(\pi z)$ gives 
\[
|R(x+iy)|
\le
\frac{\cM_1(x)\cM_2(x-\tfrac12)}
{|\Gamma(x+iy)\Gamma(2-x-iy)|}
\le Ce^{\pi|y|}.
\]
By periodicity, this bound extends from $x \in [1/2,3/2]$ to $\C$. The Hadamard factorization theorem thus implies that $R(z) = e^{az+b}$ for some $a,b \in \C$. Since $R(z) R(z-\tfrac12) = 1$ we conclude $R$ is constant, with $|R| = 1$. Positivity on $(0,2)$ implies $R \equiv 1$. 

Therefore \eqref{eq-mellin} characterizes $\cM$. Uniqueness of Mellin transforms then yields the claim. 
\end{proof}

\begin{lemma}\label{lem-cauchy-gram}
	The entrance density $e_t(x)$ satisfies
	\[
	e_t(x)=e_x(t),
	\qquad
	\int_0^\infty e_\ell(s)e_r(s)\dd s
	=\frac1{\pi(\ell+r)^2},
	\qquad
	\int_0^\infty p_t^+(x,y)e_s(y)\dd y=e_{t+s}(x)
	\]
	for all positive arguments.
\end{lemma}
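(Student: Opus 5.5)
The plan is to derive all three identities from the definition of $e_t$ as a renormalized limit of the killed kernel. Write $h(y)=y^{1/2}$, which is the invariant function of the symmetric Cauchy process killed on leaving $(0,\infty)$ (this is $y^{\alpha/2}$ with $\alpha=1$). Define
\[
e_t(x)=c_0\lim_{y\downarrow0}\frac{p_t^+(y,x)}{h(y)},
\]
with the constant $c_0$ fixed by \eqref{eq-entrance-mass}. I will use three standard facts about $p^+$.
\begin{itemize}
\item Symmetry: $p_t^+(x,y)=p_t^+(y,x)$, since the Cauchy process is symmetric, hence self-dual.
\item Chapman--Kolmogorov for the killed semigroup.
\item Scaling: $p_t^+(x,y)=t^{-1}p_1^+(x/t,y/t)$.
\end{itemize}
Scaling gives $e_t(x)=t^{-3/2}e_1(x/t)$, which is consistent with \eqref{eq-entrance-mass}.

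\emph{Third identity.} Chapman--Kolmogorov gives $p_{s+t}^+(z,x)=\int p_s^+(z,y)p_t^+(y,x)\dd y$. Divide by $h(z)$ and let $z\downarrow0$. Dominated convergence is justified by the uniform bound $p_s^+(z,y)\le C_s h(z)$ for small $z$, which follows from scaling together with the boundary behaviour of the killed kernel. This yields $e_{s+t}(x)=\int e_s(y)p_t^+(y,x)\dd y$. Symmetry of $p^+$ then turns this into the stated form $\int p_t^+(x,y)e_s(y)\dd y=e_{t+s}(x)$.

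\emph{Second identity.} Apply the same renormalization once more, now in the variable $x$. Divide the identity $e_{\ell+r}(x)=\int e_\ell(y)p_r^+(y,x)\dd y$ by $h(x)$ and let $x\downarrow 0$. Symmetry of $p^+$ gives $p_r^+(y,x)/h(x)\to e_r(y)/c_0$, so
\[
\int_0^\infty e_\ell(s)e_r(s)\dd s=c_0\lim_{x\downarrow0}\frac{e_{\ell+r}(x)}{h(x)}.
\]
By the scaling of $e$ the right-hand side is homogeneous of degree $-2$ in $\ell+r$, so it equals $C(\ell+r)^{-2}$. It is really the density of the excursion-length measure, which for index $1$ is proportional to $u^{-2}\dd u$. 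It remains to show $C=1/\pi$.

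\emph{First identity, and the main obstacle.} The symmetry $e_t(x)=e_x(t)$ is the genuinely nontrivial step. Scaling alone only says $e_t(x)=(tx)^{-3/4}f(x/t)$, and symmetry is the claim $f(u)=f(1/u)$. My first approach is explicit: write $e_1(x)$ as the stable meander density at time $1$ times $n(\zeta>1)\propto 1$, and use the known Mellin transform of the symmetric Cauchy meander / positive excursion entrance law (Doney--Savov, Kuznetsov). In Mellin variables, $u\mapsto1/u$ symmetry of $f$ becomes a reflection identity $\cM_e(z)=\cM_e(\tfrac32-z)$ (or its analogue after the $(tx)^{-3/4}$ normalization). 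For $\alpha=1$, $\rho=1/2$, I expect this to reduce to a Gamma-function reflection check, since the relevant double-Gamma factors collapse to ordinary Gamma functions.

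As a fallback I would use the planar Brownian picture. The Cauchy process is the trace on a line of planar Brownian motion, so an excursion of the killed process can be read off a Brownian path in a quadrant. The quadrant is symmetric under swapping its axes, and this swap interchanges the roles of time and position. This is the same geometry as the angle-$\pi/2$ sector, which is why the symmetry is natural here.

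\emph{Constant.} Once symmetry holds, I fix $C$ by integrating the second identity over $\ell$. Using $e_\ell(s)=e_s(\ell)$ and \eqref{eq-entrance-mass},
\[
\int_0^\infty e_r(s)(\pi s)^{-1/2}\dd s=\frac{C}{r}.
\]
The left side is a half-moment of the meander law, which can be read off the same Mellin transform. Matching it gives $C=1/\pi$.
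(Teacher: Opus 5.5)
Your third identity (renormalizing Chapman--Kolmogorov in the starting point) is sound. So is your reduction of the second identity to $C(\ell+r)^{-2}$: renormalizing once more at the endpoint shows that $\int_0^\infty e_\ell(s)e_r(s)\,ds=c_0\lim_{x\downarrow0}e_{\ell+r}(x)/h(x)$ depends only on $\ell+r$. This is a cleaner version of the paper's comparison of decomposing the excursion at time $\ell$ versus at its midpoint.

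\textbf{The gap.} The first identity $e_t(x)=e_x(t)$ is not proved. You rightly call it the nontrivial step, and your value $C=1/\pi$ also depends on it. The Mellin route rests on a formula you do not state, plus the expectation that the double-gamma factors collapse to ordinary Gamma functions. Nothing supports that expectation, and it looks implausible. By Fristedt's formula with the Cauchy kernel, the double Laplace transform of $e_t(x)$ is $\exp\bigl(-\frac1\pi\int_0^{\pi/2}\log(\alpha+\beta\tan\phi)\,d\phi\bigr)$, which involves the inverse tangent integral.

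The planar Brownian fallback is only a heuristic as stated. Write $X_t=B^1(\tau_t)$ with $\tau_t$ the first time $B^2$ hits level $t$. The killed Cauchy process sees $B$ only at first-passage times, so its survival is not a quadrant event. Swapping the axes therefore does not exchange time and position of the killed process.

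\textbf{The missing ingredient} is the Wiener--Hopf (Fristedt) formula, which is how the paper proves symmetry. Up to a constant, the double Laplace transform $\iint e^{-\alpha t-\beta x}e_t(x)\,dx\,dt$ of the excursion entrance law equals $1/\kappa(\alpha,\beta)$. Here $\kappa$ is the Laplace exponent of the bivariate ladder process (ladder time, ladder height). Fristedt's formula gives
$\kappa(\alpha,\beta)=k\exp\bigl(\int_0^\infty\int_0^\infty(e^{-t}-e^{-\alpha t-\beta x})\,t^{-1}\P(X_t\in dx)\,dt\bigr)$.
For the Cauchy process, $t^{-1}\P(X_t\in dx)=dx/(\pi(t^2+x^2))$ is symmetric in $(t,x)$. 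Hence $\kappa(\alpha,\beta)=\kappa(\beta,\alpha)$, and $e_t(x)=e_x(t)$ by uniqueness of Laplace transforms. This is the rigorous form of your Brownian intuition: the symmetry lives in the Poisson kernel, and Fristedt's formula transfers it to the entrance law.

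Once symmetry is available, you do not need any half-moment of the meander or any Mellin transform to get the constant. Apply symmetry to $e_r(s)$ as well and use scaling:
$\int_0^\infty e_s(r)(\pi s)^{-1/2}\,ds=\pi^{-1/2}\int_0^\infty s^{-2}e_1(r/s)\,ds=\pi^{-1/2}r^{-1}\int_0^\infty e_1(u)\,du=(\pi r)^{-1}$.
This gives $C=1/\pi$ directly.
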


\begin{proof}
		Using \cite[Chapter IV, (7), and Theorem VI.5(ii)]{bertoin-levy}
	and the normalization \eqref{eq-entrance-mass}, we compute
\[
		\int_0^\infty\int_0^\infty
		e^{-\alpha t-\beta x}e_t(x)\dd x\,\dd t
		=\exp\left(
		-\frac1\pi\int_0^\infty\int_0^\infty
		\frac{e^{-t}-e^{-\alpha t-\beta x}}{t^2+x^2}
		\dd x\,\dd t
		\right) \quad \text{ for }\alpha, \beta > 0.
\]
	This expression is symmetric in $\alpha,\beta$, implying 
	$e_t(x)=e_x(t)$.

	For the second identity, decomposing a Cauchy excursion of duration $\ell + r$ at time $\ell$, the partition function is $\int_0^\infty e_\ell(s)e_r(s)\dd s$. Decomposing it instead at its midpoint shows that this partition function equals $\int_0^\infty e_{(\ell+r)/2}(s)e_{(\ell+r)/2}(s)\dd s$, which is proportional to $(\ell + r)^{-2}$ by Cauchy scaling ($e_t(x)=t^{-3/2}e_1(x/t)$). We conclude that 
	\[\int_0^\infty e_\ell(s)e_r(s)\dd s = \frac{C}{(\ell+r)^2} \qquad \text{ for some } C>0.\]
	To pin down $C$, we use symmetry and integrate out $r$:
	\[ \frac C\ell = \int_0^\infty \frac{C}{(\ell+ r)^2} \dd r = \int_0^\infty e_s(\ell)
	\left(\int_0^\infty e_s(r)\dd r\right)\dd s
	=\frac1{\sqrt\pi}\int_0^\infty
	s^{-2}e_1(\ell/s)\dd s
	=\frac1{\sqrt\pi\,\ell}\int_0^\infty e_1(u)\dd u
	=\frac1{\pi\ell}.\]
	Finally, the third identity is simply the entrance law relation. 
\end{proof}

\begin{proof}[Proof of Proposition~\ref{prop-quarter-identification}]
	We claim that $Z_{\pi/2}$ satisfies the three properties of Lemma~\ref{lem-mellin}. Indeed, the first follows from reflection symmetry, the second from the fact that scaling by $c$ corresponds to adding $\log c$ to the zero mode $a$ in \eqref{eq-sector-measure}, and the third from Theorem~\ref{thm-sector-cutting} and 
	Proposition~\ref{prop-reroot}.

	By Cauchy scaling and Lemma~\ref{lem-cauchy-gram}, the function
	$(\ell,r)\mapsto\sqrt\pi\,e_r(\ell)$ also satisfies these three properties.
	Lemma~\ref{lem-mellin} therefore gives $Z_{\pi/2} (\ell, r) = \sqrt\pi e_r(\ell)$ as needed. 
\end{proof}

\begin{proof}[Proof of Corollary~\ref{cor-quarter-partition-function}]
	Combine 
	Proposition~\ref{prop-quarter-identification} with 
	\eqref{eq-entrance-mass} and 
	Lemma~\ref{lem-cauchy-gram}.
\end{proof}

\section{Vertical exploration of a JT sector}
\label{sec-vertical-exploration}

In this section we introduce the vertical exploration of an angle-$\pi$ JT sector, derive the law of its length process, and prove a strong Markov property for the unexplored sector at stopping times. 

\subsection{Exploring an angle-\texorpdfstring{$\pi/2$}{pi/2} JT sector}\label{subsec-quarter-exploration}
Let $x,t,r>0$.  Sample $\cQ_0$ from the probability measure $\Smeas_{\pi/2} (x, t+r) / Z_{\pi/2} (x, t+r)$; from the boundary point at distance $t$ counterclockwise of the vertex, cut along the angle-$\pi/2$ flow line to get two flat surfaces $\cA$ and $\cQ$, and let $Y$ be the length of the cut.  See Figure~\ref{fig-arch} (left).

\begin{figure}[ht!]
	\begin{center}
		\includegraphics[scale=0.49]{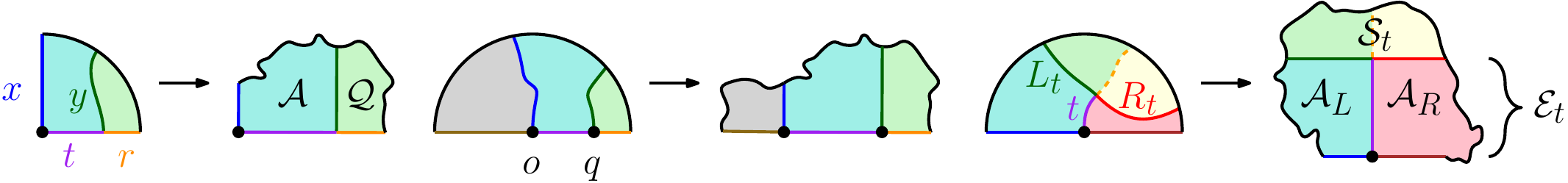}
	\end{center}
	\caption{\label{fig-arch} 
		\textbf{Left:} When cutting an angle-$\pi/2$ JT sector by a vertical flow line, Proposition~\ref{prop-arch} gives the law of the cut length and a stationarity property: conditioned on $\cA$, the law of $\cQ$ is $\Smeas_{\pi/2}$ with specified boundary lengths. \textbf{Middle:}  The proof of Proposition~\ref{prop-arch} starts from an angle-$\pi$ JT sector, marks a second point, and cuts by vertical flow lines from both points.  \textbf{Right:} Setup for Proposition~\ref{prop-retained} and its proof. The horizontal chord through $p_t$ cuts the angle-$\pi$ JT sector into an explored region $\cE_t$ and a to-be-explored region $\cS_t$, and has lengths $(L_t,R_t)$. Cutting $\cE_t$ along the vertical ray gives $\cA_L,\cA_R$.} 
\end{figure}

\begin{proposition}\label{prop-arch}
	For each fixed $t>0$ and a.e.\ $x, r > 0$, the law of the cut length $Y$ is 
	\begin{equation}\label{eq-quarter-cut-length}
		\P(Y\in\dd y)
		=\frac{p_t^+(x,y)Z_{\pi/2}(y,r)}
		{Z_{\pi/2}(x,t+r)}\dd y.
	\end{equation}
	Conditioned on $Y = y$, the flat surfaces $\cA$ and $\cQ$ are independent, and the law of $\cQ$ is $\Smeas_{\pi/2}(y, r)/Z_{\pi/2} (y,r)$. The conditional law of $\cA$ depends only on $(x,t,y)$. 
\end{proposition}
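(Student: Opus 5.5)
We will realize $\cQ_0$ inside an angle-$\pi$ JT sector that carries two marked base points, as in Figure~\ref{fig-arch} (middle), and compare the two orders of cutting. Fix $\ell>0$ and consider the measure $\Smeas_\pi(\ell,t+r)$, with vertex $v_1$ and a second marked base point $v_2$ at distance $t$ from $v_1$ on the side of length $t+r$. By Proposition~\ref{prop-reroot} we have $\Smeas_\pi(\ell,t+r)=(\ell+t+r)^{-2}\P_{\ell,t+r}$. The same surface with $v_2$ as its vertex has law $\Smeas_\pi(\ell+t,r)=(\ell+t+r)^{-2}\P_{\ell+t,r}$, and both are the same unpointed surface $\P_{\ell+t+r}$ with two points marked at fixed base positions. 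So there is one measure $\mathsf M$ on surfaces with two marked points, and Theorem~\ref{thm-sector-cutting} applies with either point as the vertex.

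Cut along the vertical flow lines from $v_1$ and $v_2$. These develop to parallel vertical segments, so they should be disjoint; this is the geometric input, which we would take from the appendix on horizontal chords and flow lines. The cuts give three pieces: $\cQ_L$ (left of the first line), $\cA$ (between the lines) and $\cQ$ (right of the second line). Let $s$ and $y$ be the two cut lengths.

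\emph{First order.} Cut at $v_1$ first. Theorem~\ref{thm-sector-cutting} gives $\cQ_L\sim\Smeas_{\pi/2}(\ell,s)$ and $\cQ_R\sim\Smeas_{\pi/2}(s,t+r)$, weighted by $\dd s$. Cutting $\cQ_R$ from $v_2$ is exactly the operation defining $(\cA,\cQ,Y)$ from $\cQ_0$ with $x=s$. Let $\Lambda_{s,t,r}$ be the resulting measure on $(\cA,\cQ)$, that is, the unnormalized law of the pair when $\cQ_0\sim\Smeas_{\pi/2}(s,t+r)$. Then
\[
\mathsf M=\int_0^\infty \Smeas_{\pi/2}(\ell,s)\otimes\Lambda_{s,t,r}\,\dd s .
\]

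\emph{Second order.} Cut at $v_2$ first. This gives $\Smeas_{\pi/2}(\ell+t,y)\otimes\Smeas_{\pi/2}(y,r)\,\dd y$, and the first factor is then cut from $v_1$ into $(\cQ_L,\cA)$. Hence, conditionally on $y$, the piece $\cQ$ is independent of $(\cQ_L,\cA)$ with law $\Smeas_{\pi/2}(y,r)/Z_{\pi/2}(y,r)$. Moreover, the conditional law of $(\cQ_L,\cA)$ given $y$ depends only on $(\ell,t,y)$ and not on $r$.

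\emph{Comparing the two.} In the first description, condition on $s$, which is a measurable function of $\cQ_L$ (its right side length). Given $s$, $(\cA,\cQ)$ has law $\Lambda_{s,t,r}$ normalized, independent of $\cQ_L$. In the second description, further conditioning on $s$ (a function of $(\cQ_L,\cA)$) keeps $\cQ$ conditionally independent of everything else given $y$, with law $\Smeas_{\pi/2}(y,r)/Z_{\pi/2}(y,r)$. Therefore, for a.e.\ $s$,
\[
\Lambda_{s,t,r}=\int_0^\infty \alpha_{s,t,y}(\dd\cA)\otimes\Smeas_{\pi/2}(y,r)\,\dd y .
\]
The kernel $\alpha_{s,t,y}$ is the conditional law of $\cA$ given $(s,y)$, which the second description shows does not depend on $r$. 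This gives the conditional independence of $\cA$ and $\cQ$, the law of $\cQ$, and the claim that the law of $\cA$ depends only on $(x,t,y)$ once we set $x=s$.

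\emph{Identifying the cut length.} Write $f(s,t,y)=|\alpha_{s,t,y}|$. Taking total masses in the display above gives
\[
Z_{\pi/2}(s,t+r)=\int_0^\infty f(s,t,y)\,Z_{\pi/2}(y,r)\,\dd y\qquad\text{for a.e. } r .
\]
Corollary~\ref{cor-quarter-partition-function} shows that $f=p_t^+(s,\cdot)$ also solves this equation. To conclude we need injectivity of the map $f\mapsto\int f(y)Z_{\pi/2}(y,\cdot)\,\dd y$. By Proposition~\ref{prop-quarter-identification} and scaling, $Z_{\pi/2}(y,r)=\sqrt\pi\,y^{-3/2}e_1(r/y)$, so this map is a multiplicative convolution. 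Taking Mellin transforms in $r$ reduces injectivity to nonvanishing of the Mellin transform of $e_1$ on a vertical line. This is exactly the nonvanishing of $\cM$ established in the proof of Lemma~\ref{lem-mellin}. Hence $f(s,t,y)=p_t^+(s,y)$ for a.e.\ $y$, which gives \eqref{eq-quarter-cut-length}.

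\emph{Main obstacle.} I expect the hardest part to be the careful geometric and measure-theoretic bookkeeping. This includes showing that the second flow line lies inside $\cQ_R$ and is disjoint from the first, so that the three pieces are as described. It also includes handling the "a.e.\ $\ell,r$" qualifiers in Theorem~\ref{thm-sector-cutting}: one must pass from $\mathsf M$ to statements for a.e.\ $x=s$ and $r$ by disintegrating in $\ell$, and check that the kernel $\alpha$ is chosen measurably and consistently.
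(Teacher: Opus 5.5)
Your proof is correct, and its core is the same as the paper's: an angle-$\pi$ sector with two base points at distance $t$, cut in both orders using Theorem~\ref{thm-sector-cutting} and the rerooting of Proposition~\ref{prop-reroot} (Figure~\ref{fig-arch}, middle). It differs from the paper in two places.

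\emph{Bookkeeping.} You fix $\ell$ and compare the two disintegrations directly. The paper instead integrates over $\ell$ and $r$, restricting $\Smeas_\pi$ to right side length $>t$, and uses the explicit marginals $x^{-1/2}\dd x$ and $\sqrt y$ from Corollary~\ref{cor-quarter-partition-function}. Your version avoids those computations. It does need the remark that the kernel $\alpha$ is independent of $\ell$ as well as $r$. This is automatic, since $\Lambda_{s,t,r}$ does not involve $\ell$.

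\emph{Identifying the law of $Y$.} This is the more substantive difference. The paper pairs $\int K_t(x,\dd y)Z_{\pi/2}(y,r)=Z_{\pi/2}(x,t+r)$ with $Z_{\pi/2}(r,b)$. The Gram identity then gives $\int K_t(x,\dd y)(y+b)^{-2}=\int p_t^+(x,y)(y+b)^{-2}\dd y$ for all $b>0$, and a moment argument in $(1+y)^{-1}\in(0,1]$ finishes. That route needs only the two identities of Corollary~\ref{cor-quarter-partition-function}.

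Your Mellin deconvolution uses instead the scaling form of $Z_{\pi/2}$ and the nonvanishing of $\cM$ from Lemma~\ref{lem-mellin}. It works, but you should supply two details.
\begin{enumerate}[label=(\alph*)]
\item Absolute convergence on a vertical line, say $\Re z=1$. Since $\int_0^\infty Z_{\pi/2}(y,r)\dd r=y^{-1/2}$, Tonelli gives $\int y^{-1/2}K_t(x,\dd y)=\int_t^\infty Z_{\pi/2}(x,v)\dd v\le x^{-1/2}$. Also $\cM(1+i\omega)\neq 0$, because $\Gamma(1+i\omega)\Gamma(1-i\omega)\neq0$ and $\cM(\tfrac12+i\omega)$ is finite.
\item Phrase the argument for the measure $K_t(s,\dd y)$ rather than a density $f(s,t,y)$. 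Absolute continuity of $Y$ given $s$ is not known in advance; Fourier uniqueness for the finite measure $y^{-1/2}K_t(s,\dd y)$ in the variable $\log y$ covers this case.
\end{enumerate}

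Finally, the disjointness of the two flow lines, which you flag as the main geometric obstacle, is immediate. They develop into the lines $\Re w=0$ and $\Re w=t$, so they cannot meet.
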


\begin{proof}
		Fix $t>0$, and write $y(\cA)$ for the cut length. Let $\wt {\Ameas}_t (x, \dd \cA)$ denote the law of $\cA$ when $\cQ_0$ is instead sampled from $	\int_t^\infty \Smeas_{\pi/2}(x,v)\dd v$; this measure is finite by Corollary~\ref{cor-quarter-partition-function}.	Set
		$\Ameas_t(x,\dd\cA)
		=\sqrt{y(\cA)}\,\wt {\Ameas}_t (x, \dd \cA)$.
		
		\smallskip 
		\noindent \textbf{Step 1: The cutting identity.}
		See Figure~\ref{fig-arch}  (middle). 
		Sample $\cS \sim \Smeas_\pi$ restricted to the event that its right side length exceeds $t$. Let $o$ be its vertex and $q$ the point
		at length $t$ to its right. 
		Cut at $o$ along a vertical flow line, and call the cut length $X$ and the flat surface on the right $\cQ_0$. By Theorem~\ref{thm-sector-cutting} and Corollary~\ref{cor-quarter-partition-function}, the law of $(X, \cQ_0)$ is 
		\[
		x^{-1/2}\dd x
		\int_t^\infty \Smeas_{\pi/2}(x,v)\dd v.
		\]
		Alternatively, by  Proposition~\ref{prop-reroot} the flat surface $\cS$ with its vertex replaced by $q$ is precisely a sample from $\Smeas_\pi$ restricted to the event that its \emph{left} side length exceeds $t$. 
		Cutting at $q$, by Theorem~\ref{thm-sector-cutting} the conditional law of the right
		component $\cQ$ given the left component is
		\[
		\sqrt y \int_0^\infty \Smeas_{\pi/2}(y,r)\dd r, 
		\]
		where $y$ is the cut length. Note that this left component determines $\cA$. 
		
		Combining these two facts and disintegrating in $(x,r)$, we see that for almost every $x,r>0$, cutting a sample of $\Smeas_{\pi/2}(x, t+r)$ gives a pair $(\cA, \cQ)$ with law 
		\begin{equation}\label{eq-arch-completion}
			\Ameas_t(x,\dd\cA)\,
			\Smeas_{\pi/2}(y(\cA),r;\dd\cQ).
		\end{equation}
		
		\smallskip 
		\noindent \textbf{Step 2: Extracting the law of the cut length.}
		Let $K_t(x,\dd y)$ be the pushforward of
		$\Ameas_t(x,\dd\cA)$ under
		$\cA\mapsto y(\cA)$. For a.e.\ $x,r>0$ we have 
		\[
		\int_0^\infty K_t(x,\dd y)Z_{\pi/2}(y,r) = Z_{\pi/2}(x,t+r) = \int_0^\infty p_t^+(x,y)Z_{\pi/2}(y,r)\dd y
		\]
		where the first equality holds by  \eqref{eq-arch-completion} and the second by Corollary~\ref{cor-quarter-partition-function}.  Fix $x$ for which these identities hold for almost every $r$. Let $b > 0$; multiply both sides by $Z_{\pi/2}(r,b)$ and integrate over $r>0$. By  Corollary~\ref{cor-quarter-partition-function}, 
		\[
		\int_0^\infty\frac{K_t(x,\dd y)}{(y+b)^2}
		=\int_0^\infty\frac{p_t^+(x,y)}{(y+b)^2}\dd y
		<\infty.
		\]
		Differentiating $j\geq 0$ times at $b=1$ gives
		$\int_0^\infty (1+y)^{-j-2}K_t(x,\dd y)
		=
		\int_0^\infty (1+y)^{-j-2}p_t^+(x,y)\dd y$. Thus the finite measures $(1+y)^{-2}K_t(x,\dd y)$ and
		$(1+y)^{-2}p_t^+(x,y)\dd y$ agree on all polynomials
		in $(1+y)^{-1}$, and hence 
		$K_t(x,\dd y)=p_t^+(x,y)\dd y$.
		
		Disintegrating \eqref{eq-arch-completion} over $y(\cA)$
		and dividing by $Z_{\pi/2}(x,t+r)$ gives
		\eqref{eq-quarter-cut-length}, and the remaining claims also follow from \eqref{eq-arch-completion}. 
\end{proof}

\subsection{Length process and Markov property}
\label{subsec-vertical-markov}
In this section, we introduce the vertical exploration, and deduce length laws and a Markov property at dyadic times. 

Let $\ell, r > 0$. Consider a sample $\cS_0$ from $\P_{\ell, r}$, and call its vertex $p_0$. Recall from Section~\ref{subsec-flat-surfaces} the notions of flow line and horizontal chord. 
For the flow line $\eta_{p_0}^{\pi/2}$, let $T$ be its duration and write $p_t = \eta_{p_0}^{\pi/2}(t)$ for $0 < t < T$. 
For each dyadic $t \in (0,T)$, cut $\cS_0$ by the horizontal chord through $p_t$ 
 to get two flat surfaces $\cS_t$ and $\cE_t$. Here, $\cE_t$ lies below the cut and has a marked point $p_0$, whereas $\cS_t$ lies above the cut and comes with a marked point $p_t$. Let $L_t$ and $R_t$ be the side lengths of $\cS_t$. 
 {For dyadic $s\ge T$, set $\cE_s=\dagger$,
where $\dagger$ is a fixed cemetery state.}
 Define the filtration $\cF_t^0=\sigma(\cE_s:0<s\le t,\ s\text{ dyadic})$. Note in particular that $L_t$ and $R_t$ are $\cF_t^0$-measurable, since these lengths can be obtained by following the vertical flow line from $p_0$.

We combine Theorem~\ref{thm-sector-cutting} and Proposition~\ref{prop-arch} to prove the following. Essentially, the length process $(L_t, R_t)$ has the law of a pair of independent Cauchy processes killed on leaving the positive quadrant, under a Doob $h$-transform forcing them to die continuously at $(0,0)$ in finite time. 

\begin{proposition}\label{prop-retained}
	For a.e.\ $\ell, r>0$, the following holds under $\P_{\ell,r}$ at every dyadic time $t>0$.
	On $\{t<T\}$, the conditional law of $\cS_t$ given $\cF_t^0$ is $\P_{L_t,R_t}$.
	Moreover, $(L_t,R_t)$ is Markov with respect to
	$(\cF_t^0)$, with  transition kernel
	\begin{equation}\label{eq-tagged-kernel}
		Q_t((\ell,r),\dd x\,\dd y)
		=\frac{Z_\pi(x,y)}{Z_\pi(\ell,r)}
		p_t^+(\ell,x)p_t^+(r,y)\dd x\,\dd y.
	\end{equation}
	The missing mass in \eqref{eq-tagged-kernel} is the probability of reaching $(0,0)$ by time $t$. 
\end{proposition}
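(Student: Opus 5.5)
The idea is to establish a single-step statement and then iterate it over dyadic times using self-similarity. I work with $\Smeas_\pi(\ell,r)$, which equals $(\ell+r)^{-2}\P_{\ell,r}$ for a.e.\ $(\ell,r)$ by Proposition~\ref{prop-reroot}, and I proceed in three steps.

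\textbf{Step 1: cut along the vertical flow line.} Sample $\cS_0$ from $\Smeas_\pi(\ell,r)$ and cut along $\eta_{p_0}^{\pi/2}$. By Theorem~\ref{thm-sector-cutting}, this produces angle-$\pi/2$ sectors $\cQ_L\sim\Smeas_{\pi/2}(\ell,s)$ and $\cQ_R\sim\Smeas_{\pi/2}(s,r)$, glued along the common side of length $s$.

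\textbf{Step 2: cut each quarter sector at height $t$.} Inside each quarter sector, the vertical flow line is a straight side, and the two halves of the horizontal chord through $p_t$ are the angle-$\pi/2$ flow lines emanating from the point at distance $t$ along that side. More precisely, in $\cQ_R$ (vertex $p_0$, sides of lengths $s$ vertical and $r$ horizontal), cut from the point at distance $t$ up the vertical side along the flow line perpendicular to it. This is exactly the situation of Proposition~\ref{prop-arch}, with the roles of the two sides exchanged by reflection symmetry: take $x=r$ and the long side $s=t+s'$.

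Applying Proposition~\ref{prop-arch} on the event $\{s>t\}$, the cut length $R_t$ has density proportional to $p_t^+(r,y)Z_{\pi/2}(y,s')$. Conditionally, the upper piece is $\Smeas_{\pi/2}(y,s')$-distributed and independent of the lower arch $\cA_R$, whose conditional law depends only on $(r,t,y)$. The same holds on the left side. Integrating over $s=t+s'$ then gives
\[
\Smeas_\pi(\ell,r)\text{-joint law}=\iint \Ameas\text{-kernels}\,p_t^+(\ell,x)p_t^+(r,y)\int_0^\infty \Smeas_{\pi/2}(x,s')\otimes\Smeas_{\pi/2}(s',y)\dd s'\,\dd x\,\dd y.
\]

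\textbf{Step 3: reassemble the pieces above the chord.} By Theorem~\ref{thm-sector-cutting} read in reverse, the inner integral is the law of $\Smeas_\pi(x,y)$ cut along its vertical line, with vertex $p_t$. This identifies $\cS_t\sim\Smeas_\pi(x,y)/Z_\pi(x,y)=\P_{x,y}$, conditionally independent of $\cE_t$, where $\cE_t$ is determined by $(\cA_L,\cA_R)$. Dividing by $Z_\pi(\ell,r)$ yields the kernel \eqref{eq-tagged-kernel}. On the complementary event $\{s\le t\}$ we have $T\le t$, which accounts for the missing mass.

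There is one caveat in the reverse use of Theorem~\ref{thm-sector-cutting}. It requires knowing that gluing two independent quarter sectors along equal sides recovers the surface together with its flow line, that is, that the cutting map is injective. I take this to be implicit in the theorem, since the measure identity is one of joint laws.

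\textbf{Iterating to all dyadic times.} The single-step result gives the conditional law of $\cS_t$ given $\cE_t$. To get the conditional law given all of $\cF_t^0$, I iterate over the dyadic grid $t_k=k2^{-n}$. At each step, apply the single-step result to $\cS_{t_k}\sim\P_{L,R}$; this is legitimate because $\cE_{t_{k+1}}$ is obtained from $\cE_{t_k}$ by gluing on the new explored strip of $\cS_{t_k}$. Conditional independence, together with the Chapman--Kolmogorov relation $\int p_s^+p_t^+=p_{s+t}^+$, then makes the kernels consistent across $n$.

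I expect the main obstacle to be the ``a.e.\ $(\ell,r)$'' issue. Theorem~\ref{thm-sector-cutting} and Proposition~\ref{prop-arch} hold only for almost every parameter, whereas iterating requires applying the single-step result at the random values $(L_t,R_t)$. My plan is to handle this by working throughout with the $\sigma$-finite measure $\Smeas_\pi=\iint \Smeas_\pi(\ell,r)\dd\ell\dd r$ and proving every identity as a measure identity, disintegrating only at the very end. Since $Q_t$ has a density, the laws of $(L_t,R_t)$ are absolutely continuous, so null sets of parameters are never charged when we integrate against them.
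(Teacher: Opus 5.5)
Your proposal follows the paper's proof. Steps 1--3 are exactly its argument: cut $\cS_0$ along the vertical flow line via Theorem~\ref{thm-sector-cutting}, apply Proposition~\ref{prop-arch} (after reflection) to each quarter sector on $\{T>t\}$, and reglue the two upper pieces using \eqref{eq-cutting}; the key input is that the arch laws do not depend on the remaining side length $s'$.

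The dyadic-grid iteration, however, is unnecessary. On $\{t<T\}$ the explored region $\cE_t$ already determines every earlier $\cE_s$: the chord through $p_s$ develops at height $s\neq t$, so it cannot meet the chord through $p_t$ and lies in $\cE_t$. Hence $\cF_t^0$ is generated by $\cE_t$, and your single-step statement already gives the conditional law of $\cS_t$ given $\cF_t^0$. The Markov property then follows because the future exploration is a function of $\cS_t$, applied at the absolutely continuous random parameters $(L_t,R_t)$, as you note.
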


\begin{proof}
		Fix a dyadic $t>0$. On $\{t<T\}$, let
		$\cA_L,\cA_R$ be the two flat surfaces obtained by
		cutting $\cE_t$ along the vertical flow line from
		$p_0$ to $p_t$.
		By Theorem~\ref{thm-sector-cutting}, cutting $\cS_0$ along the vertical
		flow line gives two angle-$\pi/2$ JT sectors $\cQ_0^L$ and $\cQ_0^R$ with common side
		length $T$. Let $\cQ_t^L$ and $\cQ_t^R$ be the two flat surfaces obtained by cutting $\cS_t$ along its vertical flow line. 
		Apply Proposition~\ref{prop-arch} to each of $\cQ_0^L$ and $\cQ_0^R$.
		Conditional on $(L_t,R_t)=(x,y)$ and $s = T-t$, the four pieces $\cA_L,\cA_R, \cQ_t^L, \cQ_t^R$ are conditionally independent; the conditional laws of $\cA_L$ and $\cA_R$  depend only on $(\ell,t,x)$ and
		$(r,t,y)$, and the conditional laws of $\cQ_t^L$ and $\cQ_t^R$ are $\Smeas_{\pi/2}(x, s)/Z_{\pi/2}(x,s)$ and $\Smeas_{\pi/2}(s, y) / Z_{\pi/2}(s,y)$. Integrating over $s$ and gluing $\cQ_t^L$ and $\cQ_t^R$ using \eqref{eq-cutting}, we obtain
		\begin{equation}\label{eq-surface-markov}
			\P_{\ell,r}\bigl(
			L_t\in\dd x,\ R_t\in\dd y,\
			\cS_t\in\dd\cS,\ t<T\bigr)
			=
			\frac{p_t^+(\ell,x)p_t^+(r,y)}{Z_\pi(\ell,r)}
			\Smeas_\pi(x,y;\dd\cS)\dd x\,\dd y.
		\end{equation}
		Moreover, on $\{t<T\}$, conditionally on $(L_t,R_t)$,
		the unexplored sector $\cS_t$ is independent of
		$(\cA_L,\cA_R)$, and hence of $\cE_t$.
		
		The surface $\cE_t$ determines every earlier
		$\cE_s$, so on $\{t<T\}$, $\cF_t^0$ is generated by
		$\cE_t$. The conditional law of $\cS_t$
		given $\cF_t^0$ is therefore $\P_{L_t,R_t}$. Since the subsequent exploration
		is determined by $\cS_t$, this gives the Markov property
		for $(L_t,R_t)$.
		Taking masses in \eqref{eq-surface-markov} gives the transition kernel 
		\eqref{eq-tagged-kernel}. 
\end{proof}

We now compute the generator of $(L_t, R_t)$, and show that it extends from all dyadics to $[0,T)$ as a c\`adl\`ag process, along with other basic properties.

\begin{lemma}
	\label{lem-tagged-paths}
	For every $(\ell,r)\in(0,\infty)^2$, the process with transition
	kernel \eqref{eq-tagged-kernel} has generator
	\begin{equation}\label{eq-tagged-generator}
			\cL f(\ell,r)
			=\PV\int_{-\ell}^\infty
			\frac{(\ell+r)^2[f(\ell+z,r)-f(\ell,r)]}
			{\pi z^2(\ell+r+z)^2}\dd z
			+\PV\int_{-r}^\infty
			\frac{(\ell+r)^2[f(\ell,r+z)-f(\ell,r)]}
			{\pi z^2(\ell+r+z)^2}\dd z
	\end{equation}
	for $f\in C_c^\infty((0,\infty)^2)$.
	It admits a c\`adl\`ag version before its lifetime $T$,
	whose coordinates almost surely have no simultaneous
	nonzero jumps before $T$.
	Finally, for every $\delta>0$, the stopping time $
	\tau_\delta=\inf\{0\le t<T:L_t+R_t\le\delta\}$
	satisfies $\tau_\delta<T$ almost surely.
\end{lemma}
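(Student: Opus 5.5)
The kernel \eqref{eq-tagged-kernel} is the Doob $h$-transform, with $h=Z_\pi(\ell,r)=(\ell+r)^{-2}$ (Proposition~\ref{prop-reroot}), of the semigroup $P_t^+\otimes P_t^+$ of two independent standard symmetric Cauchy processes killed on leaving $(0,\infty)$. I would read off all four claims from this representation.

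\textbf{Generator.} First I would check that $h$ is harmonic for the killed pair. The second identity in \eqref{eq-quarter-identities}, integrated against $Z_{\pi/2}(\cdot,s)$ and combined with the first identity, shows that $\int p_t^+\otimes p_t^+\, h$ equals $h$ minus the mass of reaching the corner. So $h$ is excessive, and the missing mass is exactly absorption at $(0,0)$. For $f\in C_c^\infty((0,\infty)^2)$, the $h$-transformed generator is $h^{-1}\cA(hf)$, where $\cA$ is the generator of the killed pair. Since $hf$ has compact support in the open quadrant, $\cA(hf)$ is the sum of the two one-dimensional Cauchy generators $\PV\int [g(x+z)-g(x)]\,\frac{dz}{\pi z^2}$, with the convention that $g=0$ off the quadrant. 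Expanding $h^{-1}\cA(hf)=\cA f\cdot 1+\dots$ in the standard way, one gets $h^{-1}\cA(hf)=\sum \PV\int \frac{h(\cdot+z)}{h}[f(\cdot+z)-f]\frac{dz}{\pi z^2}+f\,h^{-1}\cA h$. The last term vanishes because $\cA h=0$ pointwise in the open quadrant. To verify this directly, compute $\PV\int_{-\ell}^\infty [(\ell+r+z)^{-2}-(\ell+r)^{-2}]\frac{dz}{\pi z^2}$ together with the killing contribution $-(\ell+r)^{-2}\int_{-\infty}^{-\ell}\frac{dz}{\pi z^2}=-\frac{(\ell+r)^{-2}}{\pi \ell}$, and sum over both coordinates. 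This one-dimensional calculus check is routine; alternatively, harmonicity follows from the semigroup identity above once absorption only happens at the corner. What remains is exactly \eqref{eq-tagged-generator}.

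\textbf{C\`adl\`ag version.} Restricted to dyadic times, on $\{t<T\}$ the process is an $h$-transform of a Feller process. I would instead construct the process directly: take two independent killed Cauchy processes, which are c\`adl\`ag, and apply the $h$-transform. Because $h$ is bounded on compacts of the open quadrant, the transform is locally absolutely continuous with respect to the unconditioned pair on $\{t<T\}$, so path properties pass over. Concretely, for each $\epsilon$, up to the exit time of $\{\ell+r\ge\epsilon,\ \ell,r\le 1/\epsilon\}$, the law is obtained by reweighting with $h(X_\tau)/h(x)$. Its dyadic restriction has the law of the dyadic process, so a c\`adl\`ag modification exists. Independent Cauchy processes have no simultaneous jumps almost surely, since their jump times come from independent Poisson random measures with diffuse intensity. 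This property also transfers by absolute continuity.

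\textbf{$\tau_\delta<T$.} This is the main point. The transition kernel loses mass only by reaching $(0,0)$, so at $T$ the process converges to $(0,0)$ rather than jumping there, unless absorption is a jump from a positive state. I would rule that out via the generator: the killing rate in $h^{-1}\cA(hf)$ is $f\,h^{-1}\cA h=0$, so there is no jump to the cemetery from interior states. Exiting by a jump out of the quadrant has weight $h=0$. I would formalize this with the martingale $h(X_{t\wedge\sigma})/h(x)$ for the unconditioned pair: the $h$-process dies only on paths where the pair hits the corner, which forces $L_{T-}+R_{T-}=0$. Hence $L+R$ drops to at most $\delta$ strictly before $T$, and combined with right-continuity this gives $\tau_\delta<T$. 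The delicate step is justifying that the lifetime limit is continuous at the corner, i.e.\ no mass escapes to infinity or exits abruptly. For this I would use that $h\to0$ at infinity, so the $h$-process cannot explode, together with the identification of the missing mass as stated in Proposition~\ref{prop-retained}.
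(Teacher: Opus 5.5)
Your derivation of the generator as $h^{-1}\cA(hf)$, with $\cA h=0$ checked by direct integration, matches the paper. So does your c\`adl\`ag and no-simultaneous-jumps argument via local absolute continuity with respect to the killed Cauchy pair.

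The gap is in the claim $\tau_\delta<T$. Your own semigroup computation gives
\[
\iint p_t^+(\ell,x)p_t^+(r,y)h(x,y)\,dx\,dy=h(\ell,r)-\int_0^t Z_{\pi/2}(\ell,s)Z_{\pi/2}(r,s)\,ds<h(\ell,r).
\]
So $h(X_t)$ is only a local martingale for the killed pair, and mass genuinely disappears. The whole content of the claim is that this mass disappears only after $L+R$ has fallen to $\delta$, and you assert this rather than prove it:
\begin{itemize}
\item The sentence ``the $h$-process dies only on paths where the pair hits the corner'' has no content. The unconditioned pair a.s.\ never hits the corner, since it is killed by a jump out of the quadrant. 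Behaviour at the lifetime $T$ is exactly what absolute continuity on $\{t<T\}$ does not see.
\item The vanishing killing term in $h^{-1}\cA(hf)$ for $f\in C_c^\infty$ only controls the process while it stays in compacts. It does not exclude dying while accumulating at an edge $\{\ell=0,\ r>0\}$, or otherwise with $\liminf_{t\uparrow T}(L_t+R_t)>\delta$.
\item You cannot appeal to the missing-mass sentence of Proposition~\ref{prop-retained}. There ``reaching $(0,0)$'' is only the convention $L_t=R_t=0$ for $t\ge T$ and says nothing about $\lim_{t\uparrow T}(L_t+R_t)$, so that step is circular.
\item Non-explosion, from $h\to0$ at infinity, is fine, but it is not the issue.
\end{itemize}

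The missing step is uniform integrability plus optional stopping. Your phrase ``the martingale $h(X_{t\wedge\sigma})/h(x)$'' gestures at this but never carries it out. Let $\sigma_\delta$ be the first time the killed pair $X$ satisfies $X^1+X^2\le\delta$, and let $M_t=h(X_t)$, which vanishes after the killing time $\zeta$.
\begin{itemize}
\item For $t<\sigma_\delta$ we have $M_t<\delta^{-2}$.
\item The $h$-transform identity at the stopping time $\sigma_\delta$ gives $\E[h(X_{\sigma_\delta});\sigma_\delta<\zeta]=h(\ell,r)\P_{\ell,r}(\tau_\delta<T)\le h(\ell,r)$. So the overshoot is integrable, and $(M_{t\wedge\sigma_\delta})$ is a uniformly integrable martingale.
\item Optional stopping at the a.s.\ finite time $\zeta$, where $M_\zeta=0$, yields $h(\ell,r)=\E[h(X_{\sigma_\delta});\sigma_\delta<\zeta]$.
\end{itemize}
Comparing the last two identities gives $\P_{\ell,r}(\tau_\delta<T)=1$. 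This is the paper's argument, and it handles escape to infinity, edge accumulation and every other mode of mass loss at once.
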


\begin{proof}
	Let $X$ be a pair of independent standard symmetric Cauchy
	processes started from $(\ell,r)$ and killed on leaving
	$(0,\infty)^2$, with generator $\cA$. 
	By \eqref{eq-tagged-kernel} and \eqref{eq-half-sector-partition-function}, the length
	process is the Doob $h$-transform of this pair, where
	$h(\ell,r)=Z_\pi(\ell,r)=(\ell+r)^{-2}$.
	Extending $h$ by zero outside the quadrant and at the
	cemetery state, direct integration gives $\cA h=0$
	in the quadrant. The Doob-transform formula
	$\cL f=h^{-1}\cA(hf)$ then yields
	\eqref{eq-tagged-generator}.
	
	This construction gives a c\`adl\`ag version of $(L,R)$
	before its lifetime $T$. For each $u>0$, its law on $[0,u]$,
	restricted to $\{u<T\}$, is absolutely continuous with
	respect to that of the killed Cauchy pair. Thus its
	coordinates almost surely have no simultaneous nonzero
	jumps on $[0,u]$ on $\{u<T\}$. Taking all rational $u$
	proves the assertion before $T$.
	
	For the final assertion, let $\zeta$ be the lifetime of the killed
Cauchy pair $X$. By Dynkin's formula and $\cA h=0$,
the process $M_t=h(X_t)$ is a nonnegative local martingale which is 
equal to zero after $\zeta$. Set
$\sigma_\delta=\inf\{t\ge0:M_t\ge\delta^{-2}\}
=\inf\{0\le t<\zeta:X_t^1+X_t^2\le\delta\}$.
The Doob $h$-transform identity gives
\[
\E\!\left[
h(X_{\sigma_\delta});\,\sigma_\delta<\zeta
\right]
=h(\ell,r)\,\P_{\ell,r}(\tau_\delta<T)
\le h(\ell,r).
\]
Hence $h(X_{\sigma_\delta})
\mathbf1_{\{\sigma_\delta<\zeta\}}$ is integrable.
This and the bound $M_t<\delta^{-2}$ for $t<\sigma_\delta$
show that $(M_{t\wedge\sigma_\delta})$ is a uniformly
integrable martingale. Optional stopping at $\zeta$ therefore
gives
\[
h(\ell,r)
=\E[M_{\zeta\wedge\sigma_\delta}]
=\E\!\left[
h(X_{\sigma_\delta});\,\sigma_\delta<\zeta
\right].
\]
Comparing the two equations yields
$\P_{\ell,r}(\tau_\delta<T)=1$.
\end{proof}

Henceforth, $(L_t,R_t)$ denotes this c\`adl\`ag extension.
We set $L_t=R_t=0$ for $t\ge T$.

\subsection{Strong Markov property}
\label{subsec-strong-markov}
 Section~\ref{subsec-vertical-markov} only dealt with dyadic times, at which no jumps occur a.s. In this section we extend the Markov property to all stopping times before $T$. 

First, we give the setup needed to describe what happens, geometrically speaking, at jumps.
We work in the setting of Section~\ref{subsec-vertical-markov}. Embed the flat surface $\cS_0 \sim \P_{\ell, r}$ in $\D_+$ so its vertex is $0$ and its straight sides are $[-1,0]$ and $[0,1]$. Let $F$ be its developing map normalized by $F(0)=0$
and $F'(0)>0$. As before let $(p_t)_{[0,T)}$ be its angle-$\pi/2$ flow line from 0, so $F(p_t) = i t$.  
 For dyadic $0<q < T$ let $\gamma_q:[-1,1] \to \ol {\D_+}$ be the horizontal chord through $p_q$, where we parametrize $\gamma_q$ from left to right by fixing $\gamma_q(0) = p_q$ and specifying that it has constant speed on each of $(-1,0)$ and $(0,1)$. 
 Let $U_q$ and $V_q$ be the connected components of $\D_+ \backslash \gamma_q$, where $V_q$ is the component whose closure contains 0. As before $L_q$ and $R_q$ are the lengths of $\gamma_q|_{(-1,0)}$ and $\gamma_q|_{(0,1)}$, and let $(L_t, R_t)_{[0,T)}$ be the c\`adl\`ag extension to $[0,T)$. 

\begin{proposition}
	\label{prop-all-time-bases}
	For a.e.\ $(\ell,r)\in(0,\infty)^2$, the following holds a.s.\ for $\cS_0 \sim \P_{\ell, r}$. For all $0 < t < T$,
	\begin{equation}\label{eq-one-sided-domains}
		U_{t+}=\bigcup_{\text{dyadic } q \in (t , T)}U_q,
		\qquad
		V_{t-}=\bigcup_{\text{dyadic } q \in (0,t)}V_q
	\end{equation}
	are disjoint Jordan domains. The uniform limits $\gamma_t^+ = \lim_{q \downarrow t} \gamma_q$ and $\gamma_t^- = \lim_{q \uparrow t} \gamma_q$ exist and are simple straight boundary arcs of $U_{t+}$ and $V_{t-}$ respectively. They pass through $p_t$ and have left and right lengths $(L_t, R_t)$ and $(L_{t-}, R_{t-})$ respectively.

For $0<t<T$, let $\psi_t:\D_+\to U_{t+}$ be the conformal map
sending $(-1,0,1) \mapsto (\gamma_t^+(-1),p_t,\gamma_t^+(1))$.
Set $\psi_0=\operatorname{id}$.
Then, for every $0\le t<T$, as $q \downarrow t$ through dyadic times we have 
\[
F\circ\psi_q-i q
\longrightarrow F\circ\psi_t-i t
\qquad\text{uniformly on }\ol{\D_+}.
\]
\end{proposition}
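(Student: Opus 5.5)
The plan is to reduce the statement to deterministic facts about horizontal chords of a flat surface whose developing map extends continuously to $\ol{\D_+}$, together with the c\`adl\`ag regularity of $(L,R)$ from Lemma~\ref{lem-tagged-paths}. By Proposition~\ref{prop-boundary-continuity}, $F$ extends continuously to $\ol{\D_+}$ almost surely. We work on the almost sure event where this holds, where $(L,R)$ is c\`adl\`ag on $[0,T)$, and where the rough boundary contains no horizontal straight segments. The last fact is a real-analyticity/Gaussian genericity statement for $\arg F'$ on the arc; I expect it to be available in Appendix~\ref{app-exploration-geometry}.

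\emph{Step 1: monotonicity.} For dyadic $q<q'$, the chords $\gamma_q$ and $\gamma_{q'}$ cannot cross, since $F$ maps them into the distinct horizontal lines $\Im=q$ and $\Im=q'$. They are also not separated in the wrong order, because $p_{q'}$ lies above $\gamma_q$ along the vertical flow line. Hence $U_{q'}\subseteq U_q$ and $V_q\subseteq V_{q'}$. It follows that $U_{t+}$ and $V_{t-}$ are increasing unions of Jordan domains. They are disjoint, because every $V_q$ with $q<t$ is disjoint from every $U_{q'}$ with $q'>t$.

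\emph{Step 2: existence of $\gamma_t^\pm$.} Parametrize each chord by developed arclength: $F(\gamma_q(\pm s))=iq\pm s$. Define the candidate limit through the developing map. Since $F$ is a local isometry, the angle-$0$ flow line from $p_t$ has length at least $\limsup_q R_q=R_t$ by right-continuity. The non-crossing structure shows that $\gamma_q|_{(0,1)}$ stays in a thin neighborhood of $\gamma_t^+$. Uniform convergence then follows from equicontinuity of the chords (they are unit-speed in the flat metric) combined with continuity of $F$ up to the boundary. This identifies the endpoints of $\gamma_t^+$ as limits of the endpoints of $\gamma_q$.

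\emph{Step 3: simplicity.} Both $\gamma_t^\pm$ are injective, since their developed images are segments. Their endpoints can only touch the boundary at the two ends, because a chord of $U_{t+}$ running along a boundary segment would force a horizontal rough-boundary piece. The Jordan property follows because $\partial U_{t+}$ consists of the simple arc $\gamma_t^+$ together with part of $\partial\D_+$. The left limit $\gamma_t^-$ is handled in the same way, now with lengths $(L_{t-},R_{t-})$.

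\emph{Step 4: convergence of uniformizations.} Carath\'eodory kernel convergence gives $U_q\to U_{t+}$ as $q\downarrow t$, because the domains increase to $U_{t+}$. The normalizations $\gamma_q(\pm1)\to\gamma_t^+(\pm1)$ and $p_q\to p_t$ then give local uniform convergence $\psi_q\to\psi_t$. To upgrade this to uniform convergence on $\ol{\D_+}$, the boundaries must be uniformly locally connected. Their boundaries consist of subarcs of $\partial\D_+$ together with the chords $\gamma_q$, which converge uniformly as simple arcs; I would use Pommerenke's criterion for uniform convergence of such maps. Composing with the uniformly continuous $F$ then gives uniform convergence of $F\circ\psi_q-iq$. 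The case $t=0$ is the same argument, since $U_q\uparrow\D_+$.

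\emph{Main obstacle.} I expect Step 2/3 to be the hardest: showing that the limiting chord does not degenerate and that the boundary sets stay uniformly locally connected. The difficulty is at jump times, where the limiting chord may touch the rough boundary at interior points. At the right limit this can only occur at the chord's endpoint, by maximality of flow lines. I would handle it by working in developed coordinates and proving a quantitative non-pinching estimate via the appendix lemmas on horizontal chords.
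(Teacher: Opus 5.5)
\textbf{There is a genuine gap: your Step~2 identifies the limit $\gamma_t^+$ incorrectly at jump times, and that is exactly where the content of the proposition lies.}

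You take $\gamma_t^+$ to be the actual horizontal chord through $p_t$, and claim its right part has length at least $\limsup_{q\downarrow t}R_q=R_t$. The semicontinuity goes the other way. A flow line that survives for time $s$ in the open set $\D_+$ stays a positive distance from $\partial\D_+$, so nearby flow lines also survive for time $s$. Hence the chord through $p_t$ has right length at most $\min(R_{t-},R_t)$.

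At every time with $\Delta R_t>0$, the rightward flow line from $p_t$ therefore has length at most $R_{t-}<R_t$; in fact it stops at a rough-boundary point $\zeta$. Meanwhile the chords $\gamma_q$, $q\downarrow t$, run past $\zeta$ for an extra length $\Delta R_t$. So $\gamma_t^+$ passes through $\zeta\in\partial\D_+\setminus[-1,1]$ at an interior parameter value; this is the picture of Lemma~\ref{lem-actual-minimum} and Proposition~\ref{prop-binary-geometric}. Consequently two of your claims are false:
\begin{itemize}
\item that the right limit meets the boundary only at its endpoint ``by maximality of flow lines'' ($\gamma_t^+$ is not a flow line in $\D_+$);
\item that the limiting chords touch the boundary only at their ends (the same thing happens for $\gamma_t^-$ at negative jumps).
\end{itemize}
Separately, unit speed in the flat metric does not give equicontinuity in the Euclidean coordinates of $\ol{\D_+}$, because the conformal factor degenerates at the rough boundary.

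\textbf{How to repair it.} Do not identify the limit in advance.
\begin{itemize}
\item Equicontinuity of $(\gamma_{q_n})$ comes from Lemma~\ref{lem-light-lifts}: flow-line segments of small developed length have small Euclidean diameter. This uses continuity of $F$ on $\ol{\D_+}$, compactness, the identity theorem and Schwarz reflection.
\item Any Arzel\`a--Ascoli limit $\gamma$ is simple, because $F\circ\gamma$ is an injective affine map. It also passes through $p_t$ and has lengths $(L_t,R_t)$.
\item The only boundary avoidance you need is that $\gamma$ meets $\eta=\lim\eta_n$, where $\eta_n=\partial U_{q_n}\cap\partial\D_+$, only at its endpoints. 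This follows from monotonicity. Contacts with $\partial\D_+$ on the far side of $\gamma$ are harmless for the Jordan property.
\item Uniform convergence of the Jordan curves $\eta_n\cup\gamma_{q_n}\to\eta\cup\gamma$ shows that $U_{t+}$ is the Jordan domain bounded by $\eta\cup\gamma$. This forces all subsequential limits to coincide, and your Step~4 via Pommerenke then goes through.
\item You defer the claim that $\gamma_t^+$ is a straight boundary arc of $U_{t+}$ to an unspecified non-pinching estimate; none is needed. Reflect $F\circ\psi_q-iq$ across $(-1,1)$ and apply Hurwitz's theorem: the limit has nonvanishing derivative on $(-1,1)$. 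So from the $U_{t+}$ side the metric is a flat half-disk, even at $\zeta$.
\end{itemize}
Also, the appendix results on chords at jump times cannot be invoked here, since they depend on this proposition.
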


See Figure~\ref{fig-jumps} (middle, right) for an illustration with $U_{t+}$ and $V_{t-}$ drawn in pink and blue (with $\gamma_t^+$ and $\gamma_t^-$ in red and blue). We emphasize that Proposition~\ref{prop-all-time-bases} does not prove important geometric properties depicted in Figure~\ref{fig-jumps} (e.g., at most one child is born at each time); these are shown in Section~\ref{subsec-jump-geometry}. The proof of Proposition~\ref{prop-all-time-bases} is deferred to  Appendix~\ref{subsec-limiting-arcs}.

For $0<  t<T$, let $\cS_t$ be the flat surface
$U_{t+}$ equipped with the metric inherited from $\cS_0$,
with base $\gamma_t^+$ and vertex $p_t$.
This extends the definition at dyadic times.
Its developing map in half-disk coordinates is
$F\circ\psi_t-i t$.

Let $(\cF_t)$ be the usual augmentation of
$(\cF_t^0)$, obtained by adjoining all subsets of
$\P_{\ell,r}$-null sets and making the filtration
right-continuous. A standard argument allows us to extend  Proposition~\ref{prop-retained} to arbitrary stopping times $\tau < T$. 

\begin{proposition}
	\label{prop-stopped}
	For a.e.\ $\ell, r > 0$, under
	$\P_{\ell,r}$, for any $(\cF_t)$-stopping time $\tau$, on $\{0 <  \tau < T\}$ the conditional law of $\cS_\tau$ given $\cF_\tau$  is $\P_{L_\tau, R_\tau}$. 
\end{proposition}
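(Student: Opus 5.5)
We follow the standard route from the simple Markov property at deterministic times to the strong Markov property: first discrete-valued stopping times, then approximation from above, and finally passage to the limit using right-continuity of the unexplored surface.

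\textbf{Step 1: dyadic-valued stopping times.} Fix a bounded measurable test functional $\Phi$ on flat surfaces that is continuous in a suitable topology, for instance a bounded continuous function of the developing map $F\circ\psi_t-it$ in half-disk coordinates, uniform on $\ol{\D_+}$. Let $\tau_n=2^{-n}\lceil 2^n\tau\rceil$, which is an $(\cF_t)$-stopping time taking dyadic values with $\tau_n\downarrow\tau$. For $A\in\cF_\tau\subseteq\cF_{\tau_n}$ and each dyadic $q$, the event $A\cap\{\tau_n=q\}$ lies in $\cF_q$. The $\cF_q$ here is the augmented filtration, but since $q$ is dyadic and the process has no fixed jump times, $\cF_q$ agrees with $\cF^0_q$ up to null sets; this uses the Markov property at dyadic times. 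Proposition~\ref{prop-retained} then gives
\[
\E\bigl[\Phi(\cS_q)\mathbf 1_{A\cap\{\tau_n=q<T\}}\bigr]=\E\bigl[\P_{L_q,R_q}[\Phi]\,\mathbf 1_{A\cap\{\tau_n=q<T\}}\bigr].
\]
Summing over $q$ yields the claim for $\tau_n$ on $\{\tau_n<T\}$.

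\textbf{Step 2: letting $n\to\infty$.} On $\{0<\tau<T\}$ we have $\tau_n<T$ eventually, because $\tau<T$ and $\tau_n\downarrow\tau$. By Proposition~\ref{prop-all-time-bases}, $F\circ\psi_{\tau_n}-i\tau_n\to F\circ\psi_\tau-i\tau$ uniformly, so $\Phi(\cS_{\tau_n})\to\Phi(\cS_\tau)$. By right-continuity, $(L_{\tau_n},R_{\tau_n})\to(L_\tau,R_\tau)\in(0,\infty)^2$. It remains to know that $(\ell,r)\mapsto\P_{\ell,r}[\Phi]$ is continuous on $(0,\infty)^2$. This follows from Definition~\ref{def-conditioned-sector-law}: a sample of $\P_{\ell,r}$ is the fixed-$G$ surface scaled by $(\ell+r)$ and rerooted at base arclength $\ell$, and the rerooting map is continuous by continuity of $F_{\pi,G,0}$ on $\ol{\D_+}$ (Proposition~\ref{prop-boundary-continuity}). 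Dominated convergence then finishes the limit.

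\textbf{Step 3: the almost-everywhere caveat.} Proposition~\ref{prop-retained} only holds for a.e.\ $(\ell,r)$. This is harmless here, since we apply it under the fixed starting law $\P_{\ell,r}$ for such a good $(\ell,r)$, and the conditional law is identified via the transition formula \eqref{eq-surface-markov}. Finally, the class of such $\Phi$ determines laws, so the identity extends to all bounded measurable $\Phi$ by a monotone class argument.

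The main obstacle is Step 2, and specifically the precise identification of $\cS_\tau$ as the right limit $U_{\tau+}$. This is exactly what Proposition~\ref{prop-all-time-bases} supplies; it is what makes the strong Markov property hold at jump times, where $\cS_{\tau-}\ne\cS_\tau$.
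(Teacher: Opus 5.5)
Your Steps 2--3 match the paper: approximation from above, Proposition~\ref{prop-all-time-bases} with right-continuity of the lengths, and weak continuity of $(\ell,r)\mapsto\P_{\ell,r}$ from Definition~\ref{def-conditioned-sector-law}. The gap is the filtration bookkeeping in Step 1.

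With $\tau_n=2^{-n}\lceil 2^n\tau\rceil$ you have $\{\tau_n=q\}=\{q-2^{-n}<\tau\le q\}$. So for $A\in\cF_\tau$ the event $A\cap\{\tau_n=q\}$ is only known to lie in the right-continuous augmentation $\cF_q$. Proposition~\ref{prop-retained}, however, identifies the conditional law given $\cF_q^0$. Your reason for $\cF_q=\cF_q^0$ up to null sets does not establish it. Absence of fixed jump times says nothing about the germ $\sigma$-field $\bigcap_{s>q}\cF_s^0$: a continuous process can reveal new information immediately after $q$. The Markov property at the single time $q$ does not give it either. The claim is a Blumenthal-type $0$--$1$ statement, and proving it would need the limiting argument of your Step 2.

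The simplest repair is the paper's choice: approximate strictly from above with $\tau_n=2^{-n}(\lfloor 2^n\tau\rfloor+1)$. Then $\{\tau_n=q\}=\{q-2^{-n}\le\tau<q\}$. For $A\in\cF_\tau$, the set $A\cap\{\tau<q\}=\bigcup_m A\cap\{\tau\le q-1/m\}$ lies in $\bigvee_{s<q}\cF_s$. Each such $\cF_s$ is contained in the completion of $\cF_q^0$ by null sets, and the same holds for $\{\tau\ge q-2^{-n}\}$. Hence Proposition~\ref{prop-retained} applies directly on each $\{\tau_n=q<T\}$. Your Step 2 then goes through unchanged, since on $\{0<\tau<T\}$ still $\tau_n\downarrow\tau$ and $\tau_n<T$ eventually.

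Alternatively, keep the ceiling approximation but first upgrade Proposition~\ref{prop-retained} from $\cF_q^0$ to $\cF_q$. Apply it at dyadic $q'\downarrow q$ and pass to the limit exactly as in your Step 2.
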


\begin{proof}
		Write $F_t=F\circ\psi_t-it$ for the developing
		map of $\cS_t$.
		By Definition~\ref{def-conditioned-sector-law}, the law of the normalized developing map
		under $\P_{\ell,r}$ varies weakly continuously with
		$(\ell,r)$ in $\Hol(\D_+)$.
	
	Set 
	$\tau_n=2^{-n}\bigl(\lfloor2^n\tau\rfloor+1\bigr)$,
	with $\tau_n=\infty$ when $\tau=\infty$.
On $\{0 < \tau<T\}$, eventually $\tau_n<T$, and
Proposition~\ref{prop-all-time-bases} and right continuity of the lengths give
\[
F_{\tau_n}\to  F_\tau
\quad\text{in }\Hol(\D_+) \quad \text{ and } 
(L_{\tau_n},R_{\tau_n})\to(L_\tau,R_\tau) \quad \text{a.s.}
\]
This and weak continuity of the measures $\P_{\ell, r}$ from Definition~\ref{def-conditioned-sector-law}
	allow us to extend the dyadic time conditional
	law in Proposition~\ref{prop-retained} to the $\cF_\tau$-conditional law by the usual stopping-time
	approximation argument.
\end{proof}

\begin{remark}
	\label{rem-stopped-versions}
	Fix $b>0$. Sample a flat surface with law $\P_b$ and
	sample a uniform point on its base as its vertex, and define the vertical exploration as above. Then the conclusions of Propositions~\ref{prop-all-time-bases} and~\ref{prop-stopped}
	hold for this exploration, and its length process has transition
	kernel \eqref{eq-tagged-kernel}.
	
	Indeed, by Definition~\ref{def-conditioned-sector-law} and the aforementioned results, the claim holds for a.e.\ $b > 0$. The scale symmetry in Definition~\ref{def-conditioned-sector-law} then yields the claim for all $b>0$. 
\end{remark}

\section{Branching exploration of a JT sector}
\label{sec-branching}

In this section, we extend the strong Markov property for the vertical exploration of an angle-$\pi$ JT sector to a full branching exploration. As before, we draw horizontal chords through points on a vertical flow line --- the explored region is swept out by the union of these horizontal chords, and the unexplored regions are independent angle-$\pi$ JT sectors given their base lengths. After finishing such an exploration, we iterate in each unexplored child, and so forth, yielding a branching exploration.  

\subsection{Geometry of jumps}
\label{subsec-jump-geometry}
All proofs in this section are deferred to  Appendix~\ref{subsec-chords-at-jumps}.
Recall the setup of Remark~\ref{rem-stopped-versions}: fix $b > 0$, sample a JT sector from $\P_b$ with its vertex uniformly sampled from the base, and carry out the vertical exploration from the vertex. The angle-$\pi/2$ flow line from the vertex is $(p_t)_{0 \leq t < T}$, and the limits $\gamma_t^-$ and $\gamma_t^+$ of horizontal chords have lengths $(L_{t-}, R_{t-})$ and $(L_t, R_t)$ respectively (Proposition~\ref{prop-all-time-bases}). Recall that $\gamma_t$ denotes the horizontal chord through $p_t$.

\begin{figure}[ht!]
	\begin{center}
		\includegraphics[scale=0.52]{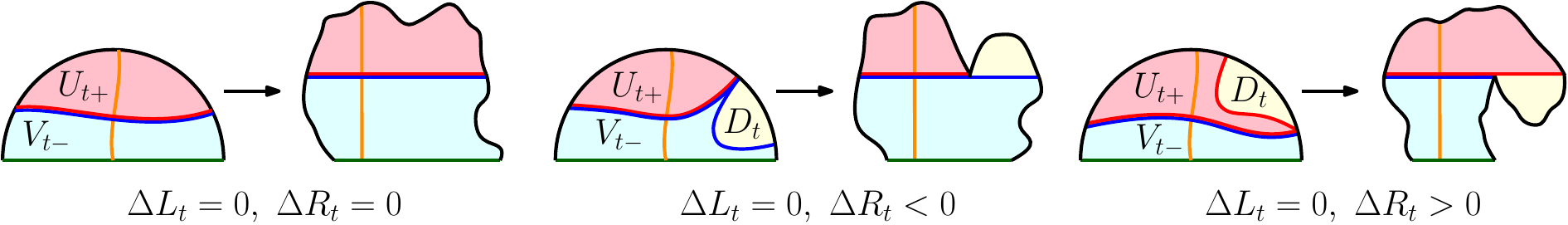}
	\end{center}
	\caption{\label{fig-jumps} 
	The curves $\gamma_t^-$ (blue) and $\gamma_t^+$ (red) are the left- and right-limits of horizontal chords. The orange curve represents the vertical flow line. 
	\textbf{Left:} By Lemma~\ref{lem-actual-minimum}, at non-jump times we have $\gamma_t^- = \gamma_t^+$. \textbf{Middle:} At a negative jump time we have $\gamma_t^+ \subset \gamma_t^-$, and $\gamma_t^\Delta = \ol{\gamma_t^- \backslash \gamma_t^+}$ is the base of a child region $D_t$ (Proposition~\ref{prop-binary-geometric}). \textbf{Right:} At a positive jump time we have $\gamma_t^- \subset \gamma_t^+$, and $\gamma_t^\Delta = \ol{\gamma_t^+ \backslash \gamma_t^-}$ is the base of $D_t$.} 
\end{figure}

\begin{lemma}
	\label{lem-actual-minimum}
	Almost surely, simultaneously for all
	$0<t<T$, we have $\gamma_t \subset \gamma_t^-\cap \gamma_t^+$, and $\gamma_t$ has left and right
	 lengths $
	\min(L_{t-},L_t)$
	and $\min(R_{t-},R_t)$ respectively.
\end{lemma}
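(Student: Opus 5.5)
We argue deterministically from the developing map $F$ and the limiting arcs of Proposition~\ref{prop-all-time-bases}, working on the almost sure event where that proposition holds. The lemma has two parts: a containment $\gamma_t\subset\gamma_t^-\cap\gamma_t^+$, and a length identity. We prove the containment for each side of the chord separately. The right half of $\gamma_t$ is the angle-$0$ flow line $\eta^0_{p_t}$, and the right half of $\gamma_q$ is $\eta^0_{p_q}$.

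\textbf{Containment in $\gamma_t^+$.} Fix a point $w=\eta^0_{p_t}(s)$ with $s<R_t^{\mathrm{chord}}$, where $R_t^{\mathrm{chord}}$ denotes the actual right length of $\gamma_t$. The segment $\{\eta^0_{p_t}(u):u\le s\}$ is a compact subset of the open set $\D_+$ (or of straight boundary arcs). We will show $s\le R_t$ and that $w$ lies on $\gamma_t^+$.

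Since $F$ is a local isometry, a thin developed rectangle $[0,s]\times[0,\epsilon]$ around the segment lifts injectively near it. The lift is obtained by integrating flow lines from $p_{t+u}$, $0\le u\le\epsilon$, and it stays inside $\D_+$ by compactness. Hence for dyadic $q\in(t,t+\epsilon)$ the chord $\gamma_q$ extends to the right at least to length $s$, and $\gamma_q$ restricted to $[0,s]$ is the translate $w'\mapsto$ the point at developed position $F(p_t)+iu+x$. As $q\downarrow t$ these points converge to $\eta^0_{p_t}(x)$. Using the uniform convergence $F\circ\psi_q-iq\to F\circ\psi_t-it$ together with the identification of $\gamma_t^+$ as the uniform limit of $\gamma_q$, we get that $\gamma_t^+$ contains $\eta^0_{p_t}([0,s])$, and so $R_t\ge s$.

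\textbf{Containment in $\gamma_t^-$.} The same thickening argument applied with $q\uparrow t$ gives $R_{t-}\ge s$ and that the segment lies in $\gamma_t^-$. This yields $\gamma_t\subset\gamma_t^-\cap\gamma_t^+$ and the inequalities $R^{\mathrm{chord}}_t\le\min(R_{t-},R_t)$, and symmetrically on the left.

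\textbf{Reverse inequality.} Suppose $R^{\mathrm{chord}}_t<\min(R_{t-},R_t)$. The flow line $\eta^0_{p_t}$ stops at a point $z$ where straight continuation would leave the surface. Since $t<T$, $p_t$ is interior, so $z$ is a boundary point of $\D_+$; this uses that the flow line has a limit, which follows from Proposition~\ref{prop-boundary-continuity}. On the other hand, $\gamma_t^+$ and $\gamma_t^-$ are simple straight boundary arcs of the Jordan domains $U_{t+}$ and $V_{t-}$, and both pass through $z$ strictly in their interiors as straight continuations. Points of $\gamma_t^\pm$ near $z$ are limits of points on $\gamma_q$ for $q$ on either side of $t$, and these lie in the open surface.

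The main obstacle is to derive a contradiction from this configuration. The plan is to show that an open straight arc which is a common limit from above (via $\gamma_q$, $q\downarrow t$) and from below ($q\uparrow t$) must be interior to $\D_+$. By the disjointness in \eqref{eq-one-sided-domains}, a neighborhood of $z$ is split by $\gamma_t^+\cup\gamma_t^-$ between $U_{t+}$ and $V_{t-}$. Thus $z$ has a two-sided neighborhood in $\D_+$ that is isometric to a flat disk, unless $z\in\partial\D_+$ with the two arcs coinciding along the boundary. The latter is excluded because a straight boundary arc of $\D_+$ cannot be approached by chords from both sides: it would be a horizontal straight side, but the only straight side of $\cS_0$ is the base, whose height is $0\ne t$, and the rough boundary contains no straight arcs a.s. That the rough boundary has no straight arcs should itself follow from the Gaussian field being non-constant on any boundary arc. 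If these boundary-regularity facts prove delicate, I would instead use the explicit almost sure statement that $\Im F$ restricted to $\partial\D_+\setminus[-1,1]$ is not locally constant. This gives the contradiction, hence the length identity. Simultaneity over all $t$ holds because everything is deterministic on a single almost sure event.
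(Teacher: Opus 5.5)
Your containment step (thickening $\gamma_t$ to get $\gamma_t\subset\gamma_t^-\cap\gamma_t^+$, and hence the upper bounds on the lengths) is fine; the paper takes it for granted. The gap is in the reverse inequality.

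You correctly reduce to the case where $\gamma_t$ ends at a rough-boundary point $z$ lying in the relative interior of both $\gamma_t^+$ and $\gamma_t^-$. Your dichotomy is: either $z$ has a full flat disk neighbourhood in $\D_+$, or the two arcs run together along $\partial\D_+$ beyond $z$. This misses the case that actually has to be excluded. Beyond $z$, the arcs $\gamma_t^+$ and $\gamma_t^-$ develop onto the same horizontal segment but may be \emph{different} curves in $\ol{\D_+}$, possibly lying in the interior of $\D_+$. Your assertion that a neighbourhood of $z$ is split between $U_{t+}$ and $V_{t-}$ is likewise unjustified, since other parts of the surface can be pinched at $z$.

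For a deterministic illustration, take $F(w)=w^4$ on the upper half-plane near its boundary point $0$. The horizontal chord through a point of the ray $\arg w=\pi/2$ stops at $0$. The limits of the chords just above and just below it continue past $0$ along the distinct interior rays $\arg w=3\pi/4$ and $\arg w=\pi/4$.

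In this situation, the half-disk neighbourhoods of $z$ in $\ol{U_{t+}}$ and $\ol{V_{t-}}$ are glued only along the piece of $\gamma_t$ ending at $z$. They form an embedded flat slit disk, i.e.\ a Euclidean $2\pi$ sector with tip at $z$. The absence of straight rough-boundary arcs, or non-constancy of $\Im F$ there, says nothing about this configuration, since the arms beyond $z$ need not touch the boundary.

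Ruling out this configuration is the real content of the lemma, and it needs a quantitative boundary estimate rather than a topological one. The paper proves (Proposition~\ref{prop-no-2pi}) that a.s.\ there is no embedded Euclidean $2\pi$ sector at a rough-boundary point. The key input is Lemma~\ref{lem-inverse}: $(1-|z|)/|F'(z)|\to0$ as $|z|\uparrow1$. This follows from continuity up to the boundary of the primitive of $e^{-G(w)}(1-w^2)^{-1/2}$, which is Proposition~\ref{prop-boundary-continuity} applied to $-G$. The contradiction runs as follows:
\begin{itemize}
\item Suppose $\psi$ parametrizes such a sector with tip $\zeta$.
\item Schwarz--Pick gives $1-|\psi(iy)|\ge cy$.
\item The estimate gives $|\psi'(iy)|=2y/|F'(\psi(iy))|=o(1)$, which forces $|\psi(iy)-\zeta|=o(y)$.
\end{itemize}
The lemma then follows. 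In the bad configuration, the upper half-disk in $\ol{U_{t+}}$ and the lower half-disk in $\ol{V_{t-}}$ centred at $z$ have disjoint interiors, because $U_{t+}\cap V_{t-}=\varnothing$. Joined along $\gamma_t$, they form exactly such a sector. This single argument also covers your ``coinciding along the boundary'' case, so Lemma~\ref{lem-no-shelf} is not needed here.
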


At times $t$ at which $(L_t, R_t)$ is continuous, $\gamma_t^-$ and $\gamma_t^+$ both coincide with the horizontal chord through $p_t$, so by Proposition~\ref{prop-all-time-bases} cutting by this horizontal chord gives $U_{t+}$ and $V_{t-}$; no additional connected component is created by the cut (Figure~\ref{fig-jumps}, left). At jump times of $(L_t, R_t)$, exactly one coordinate changes. We next show that cutting along $\gamma_t^- \cup \gamma_t^+$ creates exactly three connected
components: $U_{t+}$, $V_{t-}$, and one additional component which we call a \emph{child component}. This child has its base along $\gamma_t^- \cup \gamma_t^+$, and we say it is \emph{upward} if it locally develops above its base, and \emph{downward} otherwise. 

\begin{proposition}
	\label{prop-binary-geometric}
	Almost surely, the following holds simultaneously
	at every jump time $0<t<T$.
	
	The shorter of $\gamma_t^-$ and $\gamma_t^+$ is the horizontal chord through $p_t$, and the longer is the union of this horizontal chord and an additional horizontal chord $\gamma_t^\Delta$ added on the side whose length jumps. The length of $\gamma_t^\Delta$ is $|\Delta L_t|+|\Delta R_t|$. 
	
	Let $D_t$ be the connected component of $\D_+\setminus \gamma_t^\Delta$ not containing
	$p_t$. It has base $\gamma_t^\Delta$, and is upward if $\Delta L_t+\Delta R_t<0$ and downward if  $\Delta L_t+\Delta R_t>0$.
\end{proposition}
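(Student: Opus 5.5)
The argument is deterministic geometry applied on an almost sure event. First I collect the a.s.\ inputs. By Lemma~\ref{lem-tagged-paths}, at each jump time exactly one coordinate jumps. By Lemma~\ref{lem-actual-minimum}, $\gamma_t\subset\gamma_t^-\cap\gamma_t^+$ with side lengths $(\min(L_{t-},L_t),\min(R_{t-},R_t))$. By Proposition~\ref{prop-all-time-bases}, $\gamma_t^\pm$ are simple straight arcs through $p_t$ bounding the disjoint Jordan domains $U_{t+}$ and $V_{t-}$. Fix such a time $t$ and suppose, by left--right symmetry, that only $R$ jumps.

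\textbf{Step 1: the shorter limit chord is $\gamma_t$.} Say $R_t<R_{t-}$. The left halves of $\gamma_t^-$, $\gamma_t^+$ and $\gamma_t$ all have length $L_t=L_{t-}$. Each is a straight segment from $p_t$ in direction $\pi$, so under $F$ they develop onto the same segment $[it-L_t,it]$. Local injectivity of $F$ along flow lines then makes them coincide.

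On the right, each of $\gamma_t^-|_{(0,1)}$ and $\gamma_t^+|_{(0,1)}$ is an initial segment of the angle-$0$ flow line from $p_t$, followed for some length, possibly continuing along straight boundary portions. The flow line $\eta_{p_t}^0$ is unique up to its exit time. The horizontal chord $\gamma_t$ is the maximal extension, of length $\min(R_{t-},R_t)=R_t$. So $\gamma_t^+|_{(0,1)}$, which has length $R_t$, equals $\gamma_t|_{(0,1)}$.

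The longer arc $\gamma_t^-$ then contains $\gamma_t$. Its remaining part $\gamma_t^\Delta=\overline{\gamma_t^-\setminus\gamma_t}$ is a horizontal segment of length $R_{t-}-R_t=|\Delta R_t|$. It starts at the point $\gamma_t(1)\in\partial\D_+$, where the interior chord first touches the boundary, and then continues straight through the interior. Since $\gamma_t^\Delta$ is a uniform limit of interior chords $\gamma_q$, $q\uparrow t$, it is a chord in its own right: its interior lies in $\D_+$ apart from tangential boundary contact. This is the point to check most carefully. I would use the uniform convergence in Proposition~\ref{prop-all-time-bases} to show that the segment of $\gamma_t^-$ beyond $\gamma_t(1)$ is an honest horizontal chord with endpoints on $\partial\D_+$. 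The case $R_t>R_{t-}$ is identical with the roles of $\pm$ swapped.

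\textbf{Step 2: the component $D_t$.} Since $\gamma_t^\Delta$ is a crosscut of $\D_+$ (a simple arc with both endpoints on $\partial\D_+$), Jordan separation shows that $\D_+\setminus\gamma_t^\Delta$ has exactly two components. Let $D_t$ be the one not containing $p_t$. Its base is $\gamma_t^\Delta$, and it is disjoint from the other two pieces. In the negative-jump case, $D_t$ is disjoint from $U_{t+}$ because $U_{t+}$ is bounded by $\gamma_t^+=\gamma_t$ together with the rough boundary and lies above $\gamma_t$. It is disjoint from $V_{t-}$ because $V_{t-}$ is bounded by $\gamma_t^-$, which contains $\gamma_t^\Delta$, and lies below it.

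\textbf{Step 3: orientation.} For $q\uparrow t$ the region $V_q$ lies locally below its chord $\gamma_q$, so $V_{t-}$ lies locally below $\gamma_t^-\supset\gamma_t^\Delta$. The complementary component $D_t$ across $\gamma_t^\Delta$ therefore lies locally above it. So a negative jump ($\Delta R_t<0$) gives an upward child.

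For a positive jump, $\gamma_t^\Delta\subset\gamma_t^+$ borders $U_{t+}$, which lies locally above $\gamma_t^+$. Hence $D_t$ lies locally below, giving a downward child. Since $\Delta L_t+\Delta R_t=\Delta R_t$, this matches the stated sign convention.

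The main obstacle is the crosscut claim in Step 1: that the extra piece of the longer limit chord lies in the interior and is not merely a boundary segment. If it were a boundary segment, $D_t$ would be empty. I would rule this out by using that a jump of size $|\Delta R_t|>0$ in the right-continuous length process forces the approximating chords $\gamma_q$, which lie in the interior, to enclose a region of positive area in the limit.
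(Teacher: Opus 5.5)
Your identification of the shorter limiting arc with $\gamma_t$ and your orientation argument in Step~3 match the paper. There is, however, a genuine gap exactly where you flagged it: the claim that $\gamma_t^\Delta$ meets $\partial\D_+\setminus[-1,1]$ only at its endpoints.

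This claim is the real content of the proposition. It is what makes $\gamma_t^\Delta$ a horizontal chord, gives $\D_+\setminus\gamma_t^\Delta$ exactly two components, and yields a single child per jump. Your Step~2 assumes it without proof. Being a uniform limit of the interior chords $\gamma_q$ does not help, because such limits can touch the rough boundary in their relative interior. The longer limiting arc already does so at the common endpoint $\zeta$ of $\gamma_t$ and $\gamma_t^\Delta$.

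Nothing in your argument excludes a second tangency at some $\xi$ in the relative interior of $\gamma_t^\Delta$. In that case several components of $\D_+\setminus\gamma_t^\Delta$ avoid $p_t$, so several children are born at time $t$. The original length process only records the total jump $|\Delta R_t|$. So the inputs you list (Lemma~\ref{lem-tagged-paths} for the original exploration, Lemma~\ref{lem-actual-minimum}, Proposition~\ref{prop-all-time-bases}) cannot see this configuration; the statement is not deterministic geometry on that event.

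Your proposed remedy via positive enclosed area only addresses the case where $\gamma_t^\Delta$ is entirely a boundary segment. That case is ruled out directly by Lemma~\ref{lem-no-shelf}. The remedy says nothing about isolated tangencies.

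The missing idea is a rerooting argument that transfers the no-simultaneous-jumps property to other explorations.

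\begin{enumerate}
\item Given such a $\xi$, Lemma~\ref{lem-no-shelf} shows that the open subarc of $\gamma_t^\Delta$ between $\zeta$ and $\xi$ meets $\D_+$. It therefore contains a horizontal chord $\eta$ with both endpoints on the rough boundary, lying in the relative interior of the longer arc $\gamma$.
\item Work under $\diskone$. Proposition~\ref{prop-glued-symmetry} and Lemma~\ref{lem-reroot-null} let you assume that Lemmas~\ref{lem-tagged-paths} and~\ref{lem-actual-minimum} hold simultaneously for the vertical explorations from all rational rerootings.
\item Use the flat one-sided neighborhood of $\gamma$: in $U_{t+}$ if $\Delta R_t>0$, in $V_{t-}$ if $\Delta R_t<0$. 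Reroot at a rational point just on that side of $\eta$ and explore toward $\eta$ (hats denote the rerooted exploration).
\item At the hitting time $\sigma$, the limiting arc $\wh\gamma_\sigma^-$ extends strictly beyond both endpoints of $\eta$. The horizontal chord through $\wh p_\sigma$ is $\eta$ itself.
\item Lemma~\ref{lem-actual-minimum} then forces $\wh\gamma_\sigma^+=\eta$. Hence both coordinates of the rerooted length process jump at $\sigma$, contradicting Lemma~\ref{lem-tagged-paths}.
\end{enumerate}

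Once this is in place, your Steps~2 and~3 go through as written.
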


\subsection{The framed JT disk and its symmetries}\label{subsec-framed-disk-symmetries}

\begin{definition}
	Consider a pair $(\cS_1,\cS_2)$ sampled from
	$\iint_0^\infty
	\Smeas_\pi(\ell,r)\otimes\Smeas_\pi(r,\ell)
	\dd\ell\,\dd r$.
	Since their bases are straight, gluing them by arclength
	with opposite boundary orientations produces a flat surface. Let $p$ be the common image of their vertices, orient the gluing seam so $\cS_1$ lies locally to its left, and let $\eta^0_p$ be the forward half of the seam. Let $\cD_\bullet$ be this flat surface with marked point $p$ and curve $\eta^0_p$. We call $\cD_\bullet$ a \emph{framed JT disk}, and denote its law by 
	$\diskone$.
\end{definition}
Recall that the developing map of a flat surface is unique up to
rotation and translation.
Here ``framed'' refers to the inclusion of the curve $\eta_p^0$,
which specifies a preferred rotation when the flat surface is
developed in the plane (namely, $\eta_p^0$ should be mapped to a right-going ray in $\C$). The previous flat surfaces we have worked
with have straight sides which we have used to specify this
preferred rotation.

For $\alpha\in\R$, let $\operatorname{Rot}_\alpha(\cD_\bullet)$
be the same flat surface with marked point $p$ and marked curve
$\eta_p^\alpha$, where angles are measured relative to the
original marked curve $\eta_p^0$.
We extend flow lines to negative times by setting
$\eta_p^\theta(t)=\eta_p^{\theta+\pi}(-t)$ for $t<0$.

\begin{proposition}
	\label{prop-glued-symmetry}
	We have
	$(\operatorname{Rot}_{\pi/2})_*\diskone
	=\diskone$.
	Next, for $t\in\R$ and $\theta\in\{0,\pi/2\}$, let $E_t^\theta$
	be the event that $\eta_p^\theta(t)$ exists.
	Define $T_t^\theta(\cD_\bullet)$ to be $\cD_\bullet$ but having marked point $q=\eta_p^\theta(t)$
	and marked curve $\eta_q^0$ instead.
	Then $T_t^\theta: E_t^\theta\to E_{-t}^\theta$ is a bijection with inverse $T_{-t}^\theta$, and
	\begin{equation}\label{eq-partial-translation}
		(T_t^\theta)_*
		\bigl(\diskone|_{E_t^\theta}\bigr)
		=\diskone|_{E_{-t}^\theta}.
	\end{equation}
\end{proposition}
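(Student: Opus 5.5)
We plan to prove the two claims separately, both by passing to a representation of $\diskone$ in which the desired symmetry becomes an instance of an already established symmetry of angle-$\pi$ sectors.

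\textbf{Rotation by $\pi/2$.} We cut the framed disk along the full horizontal seam. This gives the two halves $\cS_1$ and $\cS_2$, with joint law $\iint\Smeas_\pi(\ell,r)\otimes\Smeas_\pi(r,\ell)\dd\ell\,\dd r$.

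Next, apply Theorem~\ref{thm-sector-cutting} to each half. This cuts $\cS_1$ along the vertical flow line from its vertex into quarter sectors $\cQ_1^L,\cQ_1^R$ with side lengths $(\ell,s)$ and $(s,r)$, and similarly cuts $\cS_2$ into $\cQ_2^L,\cQ_2^R$ with side lengths $(r,s')$ and $(s',\ell)$. The disk is therefore four angle-$\pi/2$ sectors glued cyclically around $p$. The joint law is
\[\iiiint \Smeas_{\pi/2}(\ell,s)\otimes\Smeas_{\pi/2}(s,r)\otimes\Smeas_{\pi/2}(r,s')\otimes\Smeas_{\pi/2}(s',\ell)\,\dd\ell\,\dd r\,\dd s\,\dd s'.\]
The flow lines from $p$ in directions $0,\pi/2,\pi,3\pi/2$ separate consecutive quarters. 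The vertical flow lines in $\cS_1$ and $\cS_2$ concatenate into the full vertical line through $p$, since the developing map is a local isometry across the seam.

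This measure is invariant under cyclically shifting the four quarters together with the relabeling $(\ell,s,r,s')\mapsto(s,r,s',\ell)$. It is also symmetric in the roles of the variables, because $Z_{\pi/2}$ is symmetric, as used in the proof of Proposition~\ref{prop-quarter-identification}. Regluing the quarters in the shifted order and using Theorem~\ref{thm-sector-cutting} in reverse, we see that the shifted configuration is exactly $\operatorname{Rot}_{\pi/2}(\cD_\bullet)$. This gives $(\operatorname{Rot}_{\pi/2})_*\diskone=\diskone$.

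One technicality: Theorem~\ref{thm-sector-cutting} holds only for a.e.\ $(\ell,r)$. This is harmless because we integrate over $(\ell,r)$.

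\textbf{Translation.} By the rotation invariance just proved, applied with conjugation, it suffices to treat $\theta=0$. Take $t>0$. The point $q=\eta_p^0(t)$ lies on the seam, at distance $t$ to the right of the vertex of $\cS_1$. Rerooting moves the vertex of $\cS_1$ to $q$, and the vertex of $\cS_2$ to the corresponding point of its base. The event $E_t^0$ is exactly $\{r>t\}$: the seam point exists iff the right side of $\cS_1$ is longer than $t$.

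By Proposition~\ref{prop-reroot}, $\Smeas_\pi(\ell,r)=(\ell+r)^{-2}\P_{\ell,r}$. Hence
\[\Smeas_\pi(\ell,r)\otimes\Smeas_\pi(r,\ell)=(\ell+r)^{-4}\,\P_{\ell,r}\otimes\P_{r,\ell}.\]
Fix $b=\ell+r$ and let the two glued bases carry a common marked point. The integral over $\ell\in(0,b)$ then amounts to placing that point uniformly, with Lebesgue measure, on the glued base, and $\P_{\ell,r}$ depends on $(\ell,r)$ only through this marked point. Translating the marked point by $t$ is the map $(\ell,r)\mapsto(\ell+t,r-t)$. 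On $\{r>t\}$ it is a measure-preserving bijection onto $\{\ell>t\}=E_{-t}^0$, and it preserves $\ell+r$.

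To justify this, we write $\P_{\ell,r}\otimes\P_{r,\ell}$ as the pair $\P_b\otimes\P_b$ glued along the base, with the vertex determined by the base coordinate. The new marked curve $\eta_q^0$ is the forward seam from $q$, so the framing matches. The inverse statement follows by symmetry, and the case $t<0$ is handled the same way.

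\textbf{Main obstacle.} The hardest step is the rotation claim. Specifically, one must verify that regluing the cyclically shifted quarters yields precisely the rotated framing, and that the four-piece decomposition holds as an identity of measures rather than only for a.e.\ side lengths. This requires using Theorem~\ref{thm-sector-cutting} twice, once in each direction, and checking that the vertical flow lines in the two halves join smoothly through $p$.
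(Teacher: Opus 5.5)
Your proposal is correct and follows essentially the same route as the paper: rotation invariance comes from the cyclic symmetry of the four-quarter decomposition, obtained by applying Theorem~\ref{thm-sector-cutting} to both halves. Translation for $\theta=0$ comes from Proposition~\ref{prop-reroot}, since the marked point is uniform on the seam given the unmarked halves, and $\theta=\pi/2$ follows by conjugating with the rotation. Your appeal to the symmetry of $Z_{\pi/2}$ is unnecessary, because the cyclic relabeling of $(\ell,s,r,s')$ by itself preserves the four-fold integral.
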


\begin{proof}
	Cut both sectors perpendicularly from $p$, with seam lengths
	$s$ and $u$.
	Applying \eqref{eq-cutting} twice gives four 
	angle-$\pi/2$ sectors with joint law 
		\begin{equation}\label{eq-four-quarters}
			\int_{(0,\infty)^4}
			\Smeas_{\pi/2}(\ell,s)
			\otimes\Smeas_{\pi/2}(s,r)
			\otimes\Smeas_{\pi/2}(r,u)
			\otimes\Smeas_{\pi/2}(u,\ell)
			\,\dd\ell\,\dd r\,\dd s\,\dd u.
		\end{equation}
		Changing the marked curve from $\eta_p^0$ to
		$\eta_p^{\pi/2}$ cyclically permutes these four sectors
		and their shared side lengths.
	The cyclic symmetry of \eqref{eq-four-quarters} gives
	$(\operatorname{Rot}_{\pi/2})_*\diskone=\diskone$.
	
	Retracing the same flow line shows that
		$T_t^\theta:E_t^\theta\to E_{-t}^\theta$ is a bijection
		with inverse $T_{-t}^\theta$.
	By Proposition~\ref{prop-reroot}, conditional
	on the two sectors after forgetting the marked points
	on their bases, the common marked point is uniform
	with respect to developed arclength on the seam.
	Thus \eqref{eq-partial-translation} holds for $\theta = 0$, and combining with
	$(\operatorname{Rot}_{\pi/2})_*\diskone=\diskone$
	gives the claim for $\theta = \pi/2$.
\end{proof}

For rational $x,y$, let $T_{x,y}=T_x^0\circ T_y^{\pi/2}$ wherever defined. This corresponds to rerooting vertically by $y$ then  horizontally by $x$. By the rerooting symmetry of Proposition~\ref{prop-glued-symmetry} and countability, the following is immediate. 

\begin{lemma}
		\label{lem-reroot-null}
		Consider $\cD_\bullet$  sampled from $\diskone$. Any property which holds a.s.\ for $\cD_\bullet$ also holds simultaneously for all defined rerootings $T_{x,y}(\cD_\bullet)$ with rational $x,y$ a.s. 
	\end{lemma}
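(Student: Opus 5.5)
The plan is to reduce the claim to the measure-preservation statement \eqref{eq-partial-translation} of Proposition~\ref{prop-glued-symmetry}, applied to the countably many maps $T_{x,y}$ with $x,y$ rational. Let $P$ be a measurable property with $\diskone(\neg P)=0$; here ``a.s.'' refers to the $\sigma$-finite measure $\diskone$, so null sets are what matter. For fixed rational $x,y$ I will show that the set $N_{x,y}$ of $\cD_\bullet$ for which $T_{x,y}(\cD_\bullet)$ is defined but fails $P$ is $\diskone$-null. A countable union over $(x,y)\in\mathbb Q^2$ then gives the simultaneous statement.

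To handle a single pair, write $T_{x,y}=T_x^0\circ T_y^{\pi/2}$. Its domain is $E_y^{\pi/2}\cap (T_y^{\pi/2})^{-1}(E_x^0)$. By Proposition~\ref{prop-glued-symmetry}, $T_y^{\pi/2}$ pushes $\diskone|_{E_y^{\pi/2}}$ forward to $\diskone|_{E_{-y}^{\pi/2}}$. Consequently, for any measurable set $A$,
\[
\diskone\bigl(E_y^{\pi/2}\cap (T_y^{\pi/2})^{-1}(A)\bigr)=\diskone\bigl(E_{-y}^{\pi/2}\cap A\bigr)\le \diskone(A).
\]
This means that preimages of null sets under $T_y^{\pi/2}$ are null. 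The same bound holds for $T_x^0$. Composing, I get
\[
\diskone(N_{x,y})\le\diskone\bigl(\{T_x^0(\cdot)\text{ defined and }\notin P\}\bigr)\le\diskone(\neg P)=0.
\]

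The only points needing care are measurability, namely that $E_t^\theta$ and the maps $T_t^\theta$ are measurable, and that the composition's domain is exactly as described. Both follow because flow lines are obtained from the developing map, which depends measurably on the surface, and Proposition~\ref{prop-glued-symmetry} already treats $T_t^\theta$ as a measurable bijection. I expect no genuine obstacle here. The key input is simply that each rerooting map is quasi-measure-preserving (in fact measure-preserving onto its image), so null sets pull back to null sets, and countability of $\mathbb Q^2$ finishes the argument.
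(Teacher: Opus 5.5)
Your proof is correct and is the argument the paper intends. The paper calls the lemma immediate from the measure-preservation identity \eqref{eq-partial-translation} of Proposition~\ref{prop-glued-symmetry} together with countability of rational pairs; your pullback-of-null-sets computation, first through $T_y^{\pi/2}$ and then through $T_x^0$, just writes that step out explicitly.
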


\subsection{Conditional law of a child in the vertical exploration}
\label{subsec-child-law}

We now identify the conditional law of a child of the vertical
exploration given the rest of the surface. The result applies
to both upward and downward children. For convenience we parametrize the initial flat surface $\cS_0 \sim \P_b$ in $\D_+$. 

\begin{lemma}
	\label{lem-daughter-law}
	Fix $b > 0$, sample $\cS_0 \sim \P_b$ with its vertex uniformly sampled from the base, and carry out the vertical exploration from the vertex. 
	Fix $\varepsilon>0$ and $n\ge1$, and let $\tau$ be the time of the 
	$n$th jump of $L_t+R_t$ whose magnitude exceeds $\varepsilon$.
	
	On $\{\tau<T\}$, let $D_\tau$ be the unexplored region from
	Proposition~\ref{prop-binary-geometric} with base
	$\gamma_\tau^\Delta$, and let $C$ be the length of 	$\gamma_\tau^\Delta$.
	Conditioned on the flat surface  parametrized by $\D_+ \backslash D_\tau$ having the vertex as its marked point, the conditional law of the flat surface parametrized by $D_\tau$ is  $\P_C$. 
\end{lemma}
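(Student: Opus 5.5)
The plan follows the outline in the introduction: view the child $D_\tau$ as the to-be-explored region $\cS_{\tau'}$ of a \emph{different} vertical exploration, started at a point outside $D_\tau$. Then apply the strong Markov property (Proposition~\ref{prop-stopped}) to that exploration. The tool for changing the starting point is the framed JT disk and its symmetries, Proposition~\ref{prop-glued-symmetry} and Lemma~\ref{lem-reroot-null}.

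\textbf{Step 1: embed in a framed disk.} By Proposition~\ref{prop-reroot} and Lemma~\ref{lem-unpointed-sector}, $\P_b$ with a uniform vertex is (up to normalization) the law of one half $\cS_1$ of a framed JT disk $\cD_\bullet\sim\diskone$. We disintegrate over the base length $b$ and note that $\cS_1$ is the part lying locally to the left of the seam. The vertical exploration of $\cS_1$ is then the exploration along $\eta_p^{\pi/2}$, and the data $\D_+\setminus D_\tau$ together with $D_\tau$ become measurable functionals of $\cD_\bullet$. It suffices to prove the claimed conditional law under $\diskone$, with the extra conditioning on the other half $\cS_2$ absorbed; $\cS_2$ is independent given the base length, so this costs nothing.

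\textbf{Step 2: reroot.} Treat the two cases separately.
\begin{enumerate}[label=(\alph*)]
\item \emph{Upward child ($\Delta L_\tau+\Delta R_\tau<0$).} By Proposition~\ref{prop-binary-geometric}, $D_\tau$ sits above the segment $\gamma_\tau^\Delta$, which is horizontal at height $\tau$ and offset to one side. Choose rationals $x,y$ so that the point $q=\eta^{\pi/2}_p(y')$ at height $y'<\tau$ close to $\tau$, shifted horizontally by $x$, lies in the explored region directly below $\gamma_\tau^\Delta$. The vertical exploration from $q$, performed in $T_{x,y'}(\cD_\bullet)$, then reaches $\gamma_\tau^\Delta$ at a stopping time $\tau'$. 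At that time its unexplored region $\cS_{\tau'}$ is exactly $D_\tau$: the chord through the new ray at height $\tau$ coincides with $\gamma_\tau^\Delta$, extended maximally.
\item \emph{Downward child.} First apply $\operatorname{Rot}_{\pi/2}$ twice (rotation by $\pi$), which reverses vertical orientation, and then argue as in (a).
\end{enumerate}
By Proposition~\ref{prop-glued-symmetry}, $T_{x,y'}$ and the rotations preserve $\diskone$ restricted to the relevant events. Lemma~\ref{lem-reroot-null} transfers the almost-sure geometric statements (Proposition~\ref{prop-all-time-bases}, Lemma~\ref{lem-actual-minimum}, Proposition~\ref{prop-binary-geometric}) to all rational rerootings at once.

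\textbf{Step 3: identify the $\sigma$-algebras and conclude.} The complement $\D_+\setminus D_\tau$, which also includes $\cS_2$ and the vertex, must be checked against the stopped $\sigma$-algebra $\cF_{\tau'}$ of the new exploration. The point is that the new explored region together with the rest of the disk outside $D_\tau$ determines the original picture, and conversely. Then Proposition~\ref{prop-stopped} (via Remark~\ref{rem-stopped-versions}) gives the conditional law of $D_\tau$ as $\P_{L_{\tau'},R_{\tau'}}$ with a uniform vertex. Forgetting the vertex leaves $\P_C$, since $L_{\tau'}+R_{\tau'}=C$.

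\textbf{Main obstacle.} The rerooting point $(x,y')$ is random and depends on the surface, whereas Proposition~\ref{prop-glued-symmetry} only handles deterministic rational shifts. I would handle this by summing over a countable partition. For each rational pair $(x,y')$ and each small dyadic box, consider the event $E_{x,y'}$ that this particular $q$ works: its vertical ray hits $\gamma_\tau^\Delta$, the hit is the $n$th $\varepsilon$-jump, and so on. These events cover $\{\tau<T\}$, and they can be ordered so that we use the first valid pair. Each event must be measurable with respect to $\cF_{\tau'}$ of the corresponding rerooted exploration, together with the outside data. On each piece we apply Proposition~\ref{prop-stopped} after the measure-preserving map $T_{x,y'}$, and then sum.

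Verifying this measurability, in particular that ``first valid pair'' and ``$n$th jump exceeding $\varepsilon$'' can be read off without looking inside $D_\tau$, is the delicate point. It relies on $\varepsilon>0$ ensuring only finitely many relevant jumps, and on $\gamma_\tau^\Delta$ being a full straight chord of positive length.
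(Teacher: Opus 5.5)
Your outline matches the paper's: pass to $\diskone$, reroot by a rational $T_{x,y}$ composed with $\operatorname{Rot}_{k\pi}$ so that a new vertical exploration started outside $D_\tau$ discovers $D_\tau$ from its exterior side, apply Proposition~\ref{prop-stopped}, and sum over countably many index values. The gap is exactly the point you flag as delicate, and the route you sketch for it does not work as stated.

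Proposition~\ref{prop-stopped} needs a stopping time for the \emph{rerooted} exploration's own filtration. Your $\tau'$ (``the time the new ray reaches $\gamma_\tau^\Delta$'') is instead defined through the original exploration. Your partition criteria are also not functions of the rerooted exploration, for two reasons.
\begin{itemize}
\item The earlier jumps of $L+R$ exceeding $\varepsilon$ occur below the horizontal chord through $q$, i.e.\ in the other half of the rerooted framed disk.
\item At the discovery time the rerooted process $\wh L+\wh R$ jumps from $C_{\mathrm{ext}}=\max(L_{\tau-}+R_{\tau-},L_\tau+R_\tau)$ down to $C$. That jump has size $C_{\mathrm{ext}}-C$, the length of $\gamma_\tau$, not $C$.
\end{itemize}
So ``the hit is the $n$th $\varepsilon$-jump'' is not an event you can evaluate in $\cF_{\tau'}$, and ``first valid pair'' inherits the same problem.

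The missing idea is to make the stopping time intrinsic and push all the bookkeeping into the choice of the index. The paper stops the rerooted exploration at $\sigma_I=\inf\{s:\wh L_s+\wh R_s\in I\}$, for a compact interval $I$ with rational endpoints. For fixed $(x,y,k,I)$ it considers the event that $T_{x,y}$ is defined, $\sigma_I<\wh T$, and the translation path avoids the unexplored region $\wh U$ after $\sigma_I$.
\begin{itemize}
\item This event is measurable with respect to the complementary surface $\wh\cE$. That surface includes the other half of the rerooted disk, which is handled by the product structure of $\diskone$ given base lengths.
\item Hence on this event, the conditional law of the surface parametrized by $\wh U$ given $\wh\cE$ is $\P_{\wh L_{\sigma_I}+\wh R_{\sigma_I}}$.
\item One then takes $C\in\operatorname{int}I$ with $\sup I<C_{\mathrm{ext}}$, which is possible by Proposition~\ref{prop-binary-geometric}.
\item One also takes $y$ so close to $\tau$ that, by c\`adl\`ag paths, $\wh L+\wh R$ stays near $C_{\mathrm{ext}}$, hence above $\sup I$, until it drops to $C$ exactly when the new ray hits $\gamma_\tau^\Delta$. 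Thus $\sigma_I$ is the discovery time and $\wh U=D_\tau$.
\item Finally $(x,y,k,I)$ is chosen as a measurable function of $(L_t,R_t)_{0\le t\le T}$, which is determined by $\D_+\setminus D_\tau$ with $p$ marked.
\end{itemize}
With this construction, nothing about ``the $n$th $\varepsilon$-jump'' ever has to be read inside the rerooted filtration.

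A minor point: when passing to $\diskone$ with random base length, keep $\varepsilon/b$ fixed rather than $\varepsilon$. Then scaling upgrades the almost-every-$b$ statement to every $b$.
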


\begin{proof}
	By the scaling symmetry of $\P_b$, it suffices to prove the claim for random $b > 0$ (keeping $\varepsilon / b$ fixed). Thus we can work under $\diskone$ instead, treating the ``base'' as the horizontal chord passing through $p$, so the surface lying to the left of $\eta^0_p$ is an angle-$\pi$ JT sector with random base length. 
	
			Fix rational $x,y$, $k\in\{0,1\}$, and a compact interval
			$I\subset(0,\infty)$ with rational endpoints.
			After applying $\operatorname{Rot}_{k\pi}\circ T_{x,y}$ to translate the root and possibly rotate the surface,
			run a \emph{rerooted} vertical exploration, with length process
			$(\wh L_s,\wh R_s)$ and lifetime $\wh T$,
			stopped at
			\[
			\sigma_I=\inf\{s\ge0:\wh L_s+\wh R_s\in I\}.
			\]
			On $\{\sigma_I<\wh T\}$, let $\wh U\subset\D$
		be the union of the unexplored domains at dyadic times
		$s\in(\sigma_I,\wh T)$,
		as in Proposition~\ref{prop-all-time-bases}.
			Let $\wh{\cS}$ and $\wh{\cE}$
			be the flat surfaces parametrized by $\wh U$ and
			$\D\setminus\wh U$, respectively, with the rerooted
			starting point marked in $\wh{\cE}$.
			
			Let $E_{x,y,k,I}$ be the event that the translation $T_{x,y}$ is
			defined, $\sigma_I<\wh T$, and its translation path
			($y$ units vertically, $x$ units horizontally) is disjoint
			from $\wh U$.
			By Propositions~\ref{prop-glued-symmetry} and~\ref{prop-stopped} and Lemma~\ref{lem-reroot-null} and the
			construction of $\diskone$, on $E_{x,y,k,I}$ the
			conditional law of $\wh{\cS}$ given
			$\wh{\cE}$ is
			$\P_{\wh L_{\sigma_I}+\wh R_{\sigma_I}}$.
			Indeed, in the rerooted picture the event $E_{x,y,k,I}$ is
			measurable with respect to $\wh{\cE}$.
			
			Recall that $D_\tau$ is the child specified by $n$ and
			$\varepsilon$, and $C$ is its base length.
			Let $C_\mathrm{ext}$ be the length of $\gamma_\tau^-\cup\gamma_\tau^+$ (i.e., $C_\mathrm{ext} = \max(L_{\tau-} + R_{\tau-}, L_\tau + R_\tau)$),
			so $C_\mathrm{ext}>C$ by Proposition~\ref{prop-binary-geometric}.
			Choose a compact interval $I\subset(0,\infty)$ with rational
			endpoints such that $C\in\operatorname{int}I$ and $\sup I<C_\mathrm{ext}$.
			By Proposition~\ref{prop-all-time-bases}, $\gamma_\tau^-\cup\gamma_\tau^+$ has a one-sided flat
			neighborhood on the exterior side of $D_\tau$.
			Therefore, we can choose rational $x,y$ and $k\in\{0,1\}$
			such that the translation path stays outside $D_\tau$
			and, after applying $\operatorname{Rot}_{k\pi}\circ T_{x,y}$,
			the rerooted vertical exploration reaches the relative
			interior of the child's base from the exterior. Since $L+R$ is c\`adl\`ag, we may choose $y$ sufficiently
			close to $\tau$ that for the rerooted vertical exploration, $\wh L + \wh R$ stays above $\sup I$ until time $\sigma_I$ when it discovers $D_\tau$. 
	
	Choose $(x,y,k,I)$ satisfying the above as a measurable function of
	$(L_t,R_t)_{0\le t\le T}$ (and hence of the flat surface parametrized by $\D \backslash D_\tau$ with the original marked point $p$). Applying the preceding conditional-law statement on each
	of the countably many events specifying the chosen
	$(x,y,k,I)$ gives the desired conditional law. 
\end{proof}

As an immediate application, we observe that all the children of a vertical exploration are conditionally independent JT sectors given $(L_t, R_t)_{[0,T]}$. 

\begin{corollary}
	\label{cor-daughter-history}
	Fix $b>0$, sample a JT sector from $\P_b$,
	and choose its vertex uniformly from its base.
	Run its vertical exploration until it terminates at time $T$.
	
	Conditioned on $(L_t,R_t)_{[0,T]}$, the
	children discovered at jumps are independent, and each child of base length $C$ has conditional law
	$\P_C$.
\end{corollary}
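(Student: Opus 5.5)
The plan is to upgrade Lemma~\ref{lem-daughter-law}, which handles one child at a time, to joint conditional independence of all children by peeling them off one at a time. Fix $\varepsilon>0$ and enumerate the jumps of $L+R$ of magnitude exceeding $\varepsilon$ by their times $\tau_1<\tau_2<\cdots$. Since $L+R$ is c\`adl\`ag and $\tau_\delta<T$ for every $\delta$, there are a.s.\ finitely many such jumps on $[0,\tau_\delta]$. I would first argue that on $[\tau_\delta,T)$ the jumps still accumulate only at $T$, so that jumps larger than $\varepsilon$ are finite in number overall. Every child discovered at a jump corresponds to one such jump for some rational $\varepsilon$, by Proposition~\ref{prop-binary-geometric}. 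It therefore suffices to prove, for each fixed $\varepsilon$ and $N$, the following: conditionally on the surface $\cE^{(N)}$ obtained by removing $D_{\tau_1},\dots,D_{\tau_N}$ (with marked vertex), these children are independent with laws $\P_{C_1},\dots,\P_{C_N}$. A monotone class argument over $\varepsilon\downarrow0$ and $N\to\infty$ then gives the statement for the full collection.

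For fixed $N$, I would argue by induction, using Lemma~\ref{lem-daughter-law} applied to the $n$th child. The key observation is that the lemma conditions on the whole complement $\D_+\setminus D_{\tau_n}$, which contains all the other children $D_{\tau_m}$ for $m\ne n$. Indeed, the children are pairwise disjoint, since each is separated from $p_t$ by its base $\gamma^\Delta$ and different jump times give disjoint regions by Proposition~\ref{prop-all-time-bases}. So, writing $\Phi_m$ for test functions of the children, I compute $\E[\prod_{m\le N}\Phi_m(D_{\tau_m})\,\Psi(\cE^{(N)})]$ by conditioning on $\D_+\setminus D_{\tau_N}$. That complement determines $C_N$, $\cE^{(N)}$ and $D_{\tau_m}$ for $m<N$, so the conditioning replaces $\Phi_N(D_{\tau_N})$ by $\P_{C_N}[\Phi_N]$. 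The result is now a functional of $\D_+\setminus D_{\tau_N}$. Iterating with $n=N-1,\dots,1$ gives the product formula $\E[\prod_m \P_{C_m}[\Phi_m]\,\Psi(\cE^{(N)})]$. The only subtlety is that $\tau_m$ and $C_m$ must be recoverable from the complement of $D_{\tau_n}$. This holds because the length process $(L_t,R_t)_{[0,T]}$ is read off along the vertical flow line, which lies in the explored region.

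Finally, $(L_t,R_t)_{[0,T]}$ is a measurable function of $\cE^{(N)}$, and in fact of the explored region alone. Conditioning the product formula down to the $\sigma$-algebra generated by the length path therefore preserves it, since each $C_m$ is measurable with respect to that path. Letting $N\to\infty$ and $\varepsilon\to0$ yields the claim. I expect the main obstacle to be the measurability bookkeeping, namely checking that the sets in question are genuinely determined by $\D_+\setminus D_{\tau_n}$ with the vertex marked, and the countable exhaustion. The probabilistic content is entirely in Lemma~\ref{lem-daughter-law}.
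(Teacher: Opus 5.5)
Your proposal is correct and is essentially the argument the paper has in mind. The corollary is stated there as an immediate consequence of Lemma~\ref{lem-daughter-law}: its conditioning on the whole complement $\D_+\setminus D_\tau$, which contains every other child and determines the length path, is exactly what your peeling induction and tower-property step use. The only blemish is your aside that jumps exceeding $\varepsilon$ are finite in number because they ``accumulate only at $T$''. That inference is invalid, but the claim is not needed, since enumerating $\tau_1<\tau_2<\cdots$ and letting $N\to\infty$ already covers infinitely many such jumps.
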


\subsection{Exploration until a cutoff}
\label{subsec-continuum-cutoff}
{Fix $b>\delta>0$.}
Recall from Lemma~\ref{lem-tagged-paths} that in the vertical exploration for $\P_b$, the stopping time $\tau_\delta = \inf \{t \::\: L_t + R_t \leq \delta\}$ satisfies $\tau_\delta < T$ a.s. 
\begin{definition}
	\label{def-jt-exploration-data}
	In the setting of Corollary~\ref{cor-daughter-history}, the
	\emph{vertical exploration data} with cutoff $\delta$ is
	$\bigl((L,R)_{[0,\tau_\delta]},\cC_\delta\bigr)$,
	where
	$\cC_\delta
	=
	\bigl\{(J_t,J_t,\varepsilon_t):
	0<t\le\tau_\delta,\ 
	|\Delta L_t|+|\Delta R_t|>\delta\bigr\}$.
	Here $\varepsilon_t=-\sgn(\Delta L_t+\Delta R_t)$, and
	$J_t=[-L_{t-}+it,-L_t+it]$ if $\Delta L_t\neq0$, or
	$J_t=[R_{t-}+it,R_t+it]$ if $\Delta R_t\neq0$.
	By Proposition~\ref{prop-binary-geometric}, these triples record the
	locations and orientations of the children of base length
	greater than $\delta$.
\end{definition}

The duplicated entry in each of the above triples represents the
fact that each child is attached along its entire base to the
explored region; we include three entries as some discrete models
of interest may have triples $(I,J,\varepsilon)$ with
$I\subsetneq J$.

\begin{remark}
	The law of the vertical exploration data in
	Definition~\ref{def-jt-exploration-data} for a sample from $\P_b$
	can be described explicitly without JT gravity.
	Sample $L_0$ uniformly from $(0,b)$, set $R_0=b-L_0$, and
	run the Doob $h$-transform of independent Cauchy processes
	described in Lemma~\ref{lem-tagged-paths} to get $(L_t,R_t)$,
	stopped at $\tau_\delta$.
	Finally $\cC_\delta$ is a measurable function of
	$(L_t,R_t)$ as in
	Definition~\ref{def-jt-exploration-data}.
\end{remark}

Finally, a \emph{recursive exploration with cutoff $\delta$} is obtained by carrying out the vertical exploration until time ${\tau_\delta}$, then choosing any unexplored child having base length greater than $\delta$ and carrying out a vertical exploration from a uniformly sampled base point, and so forth. The next lemma shows that carrying this out iteratively,
a.s.\ only finitely many steps can occur before reaching a state
where all unexplored children have base length at most $\delta$. 

The proof uses a variant of the quadratic additive martingale in \cite[Proposition~3.7]{ads-growth-fragmentation} with a cutoff. We include full details. 

\begin{lemma}
	\label{lem-continuum-finite-forest}
	Any recursive exploration with cutoff $\delta$ almost surely stops after finitely many steps.
\end{lemma}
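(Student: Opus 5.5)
We will exhibit a nonnegative supermartingale indexed by the steps of the recursive exploration which decreases by a fixed amount at each step with probability bounded below; this forces finiteness. The natural candidate, following the quadratic additive martingale of \cite[Proposition~3.7]{ads-growth-fragmentation}, is
\[
\mathcal M_k=\sum_{\text{unexplored children } c \text{ after step } k,\ |c|>\delta} |c|^2 ,
\]
where $|c|$ is base length. Only children with base length greater than $\delta$ are counted, since those with base length at most $\delta$ are never explored again. Step $k+1$ picks one child of base length $b'>\delta$ and replaces it by the children of base length greater than $\delta$ produced by its vertical exploration stopped at $\tau_\delta$. Recall that the exploration stops when $L+R\le\delta$, and that the remaining region above the chord at $\tau_\delta$ has base length at most $\delta$, so it is not counted.

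By Corollary~\ref{cor-daughter-history}, Proposition~\ref{prop-stopped} and the strong Markov property, the state at step $k$ is conditionally a family of independent $\P_c$ sectors. It therefore suffices to show that for the vertical exploration of $\P_b$ with $b>\delta$,
\[
\E_b\Bigl[\sum_{0<t\le\tau_\delta,\ |\Delta L_t|+|\Delta R_t|>\delta}(|\Delta L_t|+|\Delta R_t|)^2\Bigr]\le b^2-c(\delta)
\]
for some $c(\delta)>0$, or at least that this expectation is at most $b^2$ and the event that the exploration produces no children of size greater than $\delta$ has probability bounded below uniformly on compacts. To compute the left side, use the Lévy system of the $h$-transformed process. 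By \eqref{eq-tagged-generator}, the jump of size $z$ in one coordinate from state $(\ell,r)$ occurs at rate $\frac{(\ell+r)^2}{\pi z^2(\ell+r+z)^2}\dd z$. Thus the compensator of $\sum|\Delta|^2$ has density $\int \frac{(\ell+r)^2}{\pi(\ell+r+z)^2}\dd z$ over $z>-\ell$ (resp.\ $z>-r$), truncated to $|z|>\delta$. This is locally integrable, but the large positive jumps contribute roughly $(\ell+r)^2/\pi\cdot(\ell+r)^{-1}$ per unit, so we should compare with the time derivative of $\E[(L_t+R_t)^2]$.

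The key identity I expect, as in \cite{ads-growth-fragmentation}, is that $(L_t+R_t)^2+\sum_{s\le t}(\Delta_s)^2$ (the quadratic "mass" of the explored plus unexplored pieces) is a martingale for the untruncated process, reflecting the conservation law that the signed growth-fragmentation has $\omega=2$ as its Cramér exponent. I would verify it by applying $\cL$ to $f(\ell,r)=(\ell+r)^2$ and checking $\cL f(\ell,r)+\sum_{\text{coords}}\int z^2\cdot\text{rate}\,\dd z=0$ by direct integration (principal values cancel by symmetry near $z=0$). With the cutoff, dropping children of size at most $\delta$ only decreases the sum. The stopped term $(L_{\tau_\delta}+R_{\tau_\delta})^2\le\delta^2$ is discarded. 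Hence $\E[\mathcal M_{k+1}\mid\cdot]\le\mathcal M_k-\E[(L_{\tau_\delta}+R_{\tau_\delta})^2\text{-loss}+\text{small-jump loss}]$, which gives a supermartingale. The main obstacle is justifying optional stopping at $\tau_\delta$ for this local martingale (heavy Cauchy tails), and extracting a strict decrement. For the first, I would localize with $\tau_\delta\wedge\inf\{L+R\ge N\}$ and use Fatou on the nonnegative jump sum, together with $\tau_\delta<T$ from Lemma~\ref{lem-tagged-paths}. For the second, note that each step contributes loss of at least $\E_{b'}[(L_{\tau_\delta}+R_{\tau_\delta})^2]$ plus the loss from small jumps. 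By scaling, this is at least $c\,\delta^2$ uniformly in $b'>\delta$, since the process must cross down through $(\delta/2,\delta]$ or make jumps below $\delta$ with positive probability uniformly.

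Given the decrement, $\mathcal M_k+c\delta^2 k$ is a nonnegative supermartingale up to the stopping step. Hence the expected number of steps is at most $\mathcal M_0/(c\delta^2)=b^2/(c\delta^2)<\infty$, so the exploration stops after finitely many steps almost surely, regardless of the order in which children are chosen.
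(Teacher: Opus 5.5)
The gap is the step that turns your supermartingale into a finiteness statement. The rest is sound: your quadratic supermartingale is exactly the one the paper uses. Your zero-drift identity is also correct. For $f=(\ell+r)^2$ and $w=\ell+r$ one gets $\cL f=-\frac{w^2}{\pi}(\frac1\ell+\frac1r)$, and the untruncated jump compensator cancels this exactly. Optional stopping is harmless, because a nonnegative local supermartingale is a supermartingale by Fatou.

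\textbf{The missing uniformity.} You need a per-step decrement uniform over all child sizes, $\inf_{b'>\delta}(\text{loss})\ge c\,\delta^2$. Scaling only reduces loss$/\delta^2$ to a function of $b'/\delta\in(1,\infty)$. That range is non-compact, so scaling alone gives no uniformity.

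The bad end is $b'\downarrow\delta$. There $\E_{b'}[\tau_\delta]=b'\,\E_1[\tau_{\delta/b'}]\to0$, so the small-jump loss, which is at least $\frac{\delta}{\pi}\E_{b'}[\tau_\delta]$, vanishes. Everything then rests on $\E_{b'}[(L_{\tau_\delta}+R_{\tau_\delta})^2]$ staying bounded below.

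This fails exactly if the first passage below $\delta$ is, with probability tending to one, a single large downward jump to near $0$. Such a jump produces a child of squared base length $\approx b'^2$ and essentially no loss. For this $h$-transformed pair these jumps are not negligible a priori. The rate of jumps landing near $(0,0)$ is of order $1/\min(\ell,r)$, and the uniformly sampled starting split gives a small coordinate with positive probability.

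Your sentence ``the process must cross down through $(\delta/2,\delta]$ or make jumps below $\delta$ with positive probability uniformly'' restates the needed bound rather than proving it. The bound is plausibly true. Proving it would take a genuine undershoot estimate for the conditioned process as $b'\downarrow\delta$, plus the drift term for $b'\ge(1+\eta)\delta$; you supply neither.

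\textbf{How the paper avoids this.} It keeps only a time-drift bound. Bounding the omitted jump contributions below by positive jumps in $(0,\delta]$ gives drift at most $-\frac{2w\delta}{\pi(w+\delta)}\le-\frac\delta\pi$, using $w>\delta$. So the total duration $\cT_\delta$ over all steps satisfies $\frac\delta\pi\E[\cT_\delta]\le b^2$.

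It then couples the cutoff-$\delta$ recursion with a cutoff-$\delta/2$ recursion. The latter first performs the same steps (same children, same base points) and then continues arbitrarily. Each cutoff-$\delta$ step explores a child of base $b'>\delta$. Its run down to cutoff $\delta/2$ has expected duration at least $b'\,\E_1[\tau_{1/2}\wedge 1]\ge c\,\delta$, by scaling and monotonicity of $\tau$ in the cutoff. Hence $c\,\delta\,\E[\cN_\delta]\le\E[\cT_{\delta/2}]\le 2\pi b^2/\delta$.

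The factor-two gap between the child's base length and the cutoff is what removes the degeneration at $b'\downarrow\delta$. You can use the same device in your discrete-step version. Run $\mathcal M$ with cutoff $\delta/2$ along the coupled recursion, and note that every step has nonnegative decrement. Each step exploring a child of base $>\delta$ then has decrement at least $\frac{\delta}{2\pi}\E_{b'}[\tau_{\delta/2}]\ge c\,\delta^2$.
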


\begin{proof}
Let $\cN_\delta$ be the number of exploration steps, including the initial exploration, and let $\cT_\delta$ be the sum of durations of all exploration steps.

We study, for the initial vertical exploration up to time
$\tau_\delta$, the sum $X_t$ of $(L_t+R_t)^2$ and all squared base lengths of all
unexplored children with base length greater than $\delta$. 

		Let $f(\ell,r)=(\ell+r)^2$ and write $w=\ell+r$.
		By \eqref{eq-tagged-generator}, the drift of $(L_t + R_t)^2$ is 
		\[
		\cL f(\ell,r)
		=-\frac{w^2}{\pi}\left(\frac1\ell+\frac1r\right).
		\]
		On the other hand, by  Proposition~\ref{prop-binary-geometric}, a jump of size $z$ creates a child with base length $|z|$.
	Multiplying the jump rates in \eqref{eq-tagged-generator}
	by $z^2\mathbf1_{\{|z|>\delta\}}$ and adding the drift of
	$(L_t+R_t)^2$, the drift of $X_t$ for $t<\tau_\delta$ is
	\[
		\cL f(\ell,r)
		+\frac{w^2}{\pi}
		\left(
		\int_{-\ell}^{\infty}
		\frac{\mathbf1_{\{|z|>\delta\}}}{(w+z)^2}\dd z
		+
		\int_{-r}^{\infty}
		\frac{\mathbf1_{\{|z|>\delta\}}}{(w+z)^2}\dd z
		\right) \le
		-\frac{2w^2}{\pi}\int_0^\delta\frac{\dd z}{(w+z)^2}
		=-\frac{2w\delta}{\pi(w+\delta)}
		\le-\frac{\delta}{\pi}.
	\]
	Here we bounded the omitted contributions below by those
	from positive jumps of size at most $\delta$, and used $w>\delta$.
	Consequently,
	$(X_{t}+(\delta/\pi)t)_{[0,\tau_\delta]}$
	is a nonnegative supermartingale.
	
	By Corollary~\ref{cor-daughter-history}, we can then continue the recursive exploration in any unexplored child of base length exceeding $\delta$, extending the process $X$ to then also include unexplored children from this exploration, and so forth. Across all steps, $X_t + (\delta/\pi)t$ is a nonnegative supermartingale with initial value $b^2$, so the expected total duration of the recursive exploration is bounded by $\tfrac\delta\pi \E[ \cT_\delta] \leq b^2$. 
	
	We now bound $\cN_\delta$ using $\cT_{\delta/2}$. Couple the recursive explorations $\mathfrak E_\delta$ and $\mathfrak E_{\delta/2}$  with cutoffs $\delta$ and $\delta/2$ such that each step of $\mathfrak E_{\delta/2}$ explores the same unexplored child from the same base point as $\mathfrak E_\delta$; after $\mathfrak E_\delta$ terminates continue $\mathfrak E_{\delta/2}$ arbitrarily. 
	Let $c = \E[ \tau_{1/2}\wedge 1]>0$ where $\tau_{1/2}$ is the stopping time for a vertical exploration of a sample from $\P_1$. 
	At each exploration step of $\mathfrak E_\delta$, since the base length of the to-be-explored child is greater than $\delta$, the corresponding vertical exploration step of $\mathfrak E_{\delta/2}$ has expected duration at least $c\delta$. Therefore $c \delta \E [ \cN_\delta] \leq \E[ \cT_{\delta/2}] \leq 2\pi b^2/\delta$. 
	We conclude $\cN_\delta < \infty$ a.s.\ as desired. 
\end{proof}

\section{Convergence of Boltzmann rectilinear disks}
\label{sec-discrete-convergence}

{Recall from Section~\ref{sec-discrete-exploration} that $\P_b^{(B)}$ is the law
of a Boltzmann rectilinear disk with base length $Bb$, with all lengths
divided by $B$. In particular, $\P_b^{(1)}$ is the unscaled law.}

In this section, we check that the hypotheses of Theorem~\ref{thm-exploration-criterion} hold for the Boltzmann rectilinear disks: convergence of the vertical exploration data in Section~\ref{subsec-vertical-convergence}, and tightness  in Section~\ref{subsec-discrete-maximal-width}. 

\begin{proof}[{Proof of Theorem~\ref{thm-main} given Theorem~\ref{thm-exploration-criterion}}]
	The Boltzmann rectilinear disk laws satisfy the branching Markov property by Proposition~\ref{prop-kernel}. 
	The convergence of vertical exploration data is shown in Proposition~\ref{prop-spine}, and tightness of the maximal base length is shown in Lemma~\ref{lem-maximal-width-tightness}. Thus, the convergence of conformal factors follows from  Theorem~\ref{thm-exploration-criterion}.
\end{proof}

\subsection{Convergence of vertical explorations}
\label{subsec-vertical-convergence}

Fix $b>0$. 
Consider sequences $(B_n)$ and $(b_n)$ such that $B_n\to\infty$ and $b_n\to b$ as $n \to \infty$. 
Let $(L_t^{(n)},R_t^{(n)})_{[0,T^{(n)}]}$ be the length process
of the vertical exploration of a sample from $\P_{b_n}^{(B_n)}$
as in Section~\ref{subsec-discrete-vertical}, and let
$(L_t,R_t)_{[0,T]}$ be the corresponding process for a sample
from $\P_b$ as in Section~\ref{subsec-vertical-markov}. As usual, the vertical explorations start from uniformly random points on the bases. 
Their transition kernels are
\eqref{eq-discrete-vertical-kernel} and \eqref{eq-tagged-kernel},
respectively.

Fix $\delta \in (0,b)$ and stop the respective processes at times
\[
\tau_\delta^{(n)}
=\inf\{0 \leq t \leq T^{(n)}\::\:L_t^{(n)}+R_t^{(n)}\le\delta\},
\qquad
\tau_\delta
=\inf\{0\le t<T\::\:L_t+R_t\le\delta\}.
\]
Recall the collection $\cC_\delta^{(n)}$ of triples $(I, J, \varepsilon)$  from Definition~\ref{def-vertical-exploration-data} which identifies the sizes and locations of all unexplored children of the stopped vertical exploration of a sample from $\P_{b_n}^{(B_n)}$ having base length greater than $\delta$. 
We similarly defined $\cC_\delta$ for the stopped vertical exploration of the sample from $\P_b$ (Definition~\ref{def-jt-exploration-data}). Note that in both settings, for every triple $(I, J, \varepsilon)$ we have $I = J$ (but other discrete models of interest could have $I \subsetneq J$).

\begin{proposition}
	\label{prop-spine}
	In the topology of Definition~\ref{def-vertical-exploration-data},
	we have
	\begin{equation}\label{eq-marked-lineage-convergence}
		\left((L^{(n)},R^{(n)})_{[0,\tau_\delta^{(n)}]},
		\cC_\delta^{(n)}\right)
		\Longrightarrow
		\left((L,R)_{[0,\tau_\delta]},
		\cC_\delta\right).
	\end{equation}
\end{proposition}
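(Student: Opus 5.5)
The plan is to prove convergence of the length processes first, and then read off $\cC_\delta$ as a functional of the stopped path that is continuous at almost every limiting path.

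\textbf{Step 1: Generators.} By Lemma~\ref{lem-discrete-h-transform}, the discrete length process is the Doob $h_B$-transform of a pair of independent symmetric compound Poisson processes with L\'evy measure $B^2k(B|z|)\dd z$, killed on leaving $(0,\infty)$. I will show that this L\'evy measure converges vaguely on $\R\setminus\{0\}$ to the Cauchy measure $\dd z/(\pi z^2)$, with matching small-jump behaviour. Since the measure is symmetric, no compensation is needed. The asymptotics come from Lemma~\ref{lem-partition}: $Z(u)\sim(2\pi u^2)^{-1}$ together with the atom $\q^2\delta_0$ gives
\[
k(u)=\q^2e^{-u/2}+\int_0^u e^{-(u-v)/2}Z(v)\dd v\sim 2Z(u)\sim(\pi u^2)^{-1},
\]
so $B^2k(B|z|)\to(\pi z^2)^{-1}$ locally uniformly on $z\neq0$. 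In addition, $\int B^2k(B|z|)(z^2\wedge1)\dd z$ stays bounded and tends to the Cauchy value. Also $h_B(\ell,r)=2\pi B^2Z(B(\ell+r))\to(\ell+r)^{-2}=h(\ell,r)$ locally uniformly on $(0,\infty)^2$, and the ratios $h_B(\cdot)/h_B(\ell,r)$ converge uniformly on sets where $\ell+r$ is bounded away from $0$ and $\infty$.

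It follows that the discrete generators $\cL^{(B)}f=h_B^{-1}\cA^{(B)}(h_Bf)$ converge to $\cL f$ in \eqref{eq-tagged-generator}, uniformly on compacts, for $f\in C_c^\infty$ supported in $\{\ell+r>\delta/2\}$. The initial laws converge: in both cases the start is uniform on the base, and $b_n\to b$.

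\textbf{Step 2: Convergence of the stopped processes.} I will stop both processes at $\tau_{\delta'}$ with $\delta'<\delta$, and also at the exit time from $\{\ell+r<K\}$, so that they live in a region where the state space is localized. There I will apply a standard martingale-problem or Feller convergence theorem (Ethier--Kurtz, or Jacod--Shiryaev Theorem IX.4.8) to get Skorokhod $J_1$ convergence of the localized processes. Well-posedness of the limiting martingale problem follows because $\cL$ is the $h$-transform of a Feller process, killed on leaving the quadrant.

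The hard points are jumps that overshoot out of the quadrant, i.e.\ a coordinate going to $\le0$. In both models these are killed, and the killing mass $k(B\ell)k(Br)/Z$ must also converge. I will handle this by checking directly that the discrete termination rate before $\tau_{\delta'}$ tends to $0$. The $h$-transform makes termination only possible near $(0,0)$, and termination cannot happen while $\ell+r>\delta'$ in the limit. The localization at level $K$ is removed using the fact that $\sup_t(L_t+R_t)$ is tight, which follows from the supermartingale $h_B(L,R)^{-1/2}$ or a similar bound.

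\textbf{Step 3: Passing to $\tau_\delta$ and $\cC_\delta$.} By Skorokhod representation I will assume almost sure $J_1$ convergence. The map (path) $\mapsto(\tau_\delta,\text{stopped path},\text{jumps of size}>\delta)$ is continuous at paths with the following properties:
\begin{itemize}
\item $L+R$ crosses level $\delta$ strictly, meaning it does not touch $\delta$ from above without going below;
\item no jump has size exactly $\delta$;
\item $L_{\tau_\delta}+R_{\tau_\delta}<\delta$, or the level is entered continuously and then strictly crossed.
\end{itemize}
These hold almost surely for the limit. The reasons are absolute continuity of the jump measure, the fact that jumps occur at times with continuous distribution, and the Cauchy regularity of points (the process immediately enters below any level it hits continuously).

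Under $J_1$ convergence, jumps of size $>\delta$ then converge in time, position and sign. The discrete triples $(I,J,\varepsilon)$ from Definition~\ref{def-vertical-exploration-data} are exactly the jump intervals $J_t$ with $\varepsilon=-\sgn(\Delta L+\Delta R)$, read off from the G/R/L events. This matches Definition~\ref{def-jt-exploration-data}, so the triples converge.

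One discrete subtlety remains: at a G event both coordinates may jump simultaneously, and each single jump creates children. These are rows of height $\mathrm{Exp}(B)\to0$, and they are treated as consecutive jumps in time, which is harmless in $J_1$ because it is a single-coordinate convergence in each coordinate.

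\textbf{Main obstacle.} I expect the main obstacle to be Step 2's control near the boundary of the quadrant: tightness of jumps taking a coordinate close to $0$, and showing that the discrete process does not terminate or approach $(0,0)$ before $\tau_\delta$ with non-vanishing probability. My first attempt would be a uniform-in-$B$ Lyapunov bound using $h_B$ to compare with the limit.
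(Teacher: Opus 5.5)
Your Steps 1 and 3 are broadly sound, but the proposal has two genuine gaps.

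\textbf{The core convergence in Step 2 is not established.} Localizing to $\{\delta'<\ell+r<K\}$ does not keep the processes away from the coordinate axes. Near an axis the discrete chain does not behave as you claim. By \eqref{eq-discrete-one-row}, each row terminates with probability $k(B\ell)k(Br)/Z(B(\ell+r))$. This is of order one when $Br=O(1)$ and $\ell\asymp1$, so the termination rate is of order $B$ even though $\ell+r>\delta$. Hence ``termination is only possible near $(0,0)$'' is false for the discrete model, and the generators are not uniformly close there. You would need a separate proof that the discrete process does not approach an axis before $\tau_\delta^{(n)}$. That is exactly the obstacle you flag, and it is left open.

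The paper avoids generators altogether:
\begin{enumerate}
\item Couple the \emph{unweighted} compound Poisson pairs $X^{(n)}$ almost surely to the Cauchy pair $X$, using the L\'evy convergence theorem and Skorokhod.
\item Stop them at their exit times $S_n,S$ from $O_\delta=\{\ell,r>0,\ \ell+r>\delta\}$. The limiting path stays a positive distance from $\partial O_\delta$ before $S$ and exits by a jump landing outside $\ol{O_\delta}$, so $S_n\to S$.
\item Obtain the stopped length processes by reweighting with $D_n=h_B(X^{(n)}_{S_n})/h_B(X^{(n)}_0)$ and $D=h(X_S)/h(X_0)$, where $h_B,h$ vanish off the quadrant.
\end{enumerate}
Then $D_n\to D$ a.s., $\E D_n=\P(\tau_\delta^{(n)}<T^{(n)})\le1$, and $\E D=1$ by Lemma~\ref{lem-tagged-paths}. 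Fatou and Scheff\'e give $D_n\to D$ in $L^1$. This yields the stopped-path convergence. It also shows that discrete termination, or exit through an axis, before $\tau_\delta^{(n)}$ has vanishing probability. No localization at $K$ and no Lyapunov function is needed.

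\textbf{Step 3 rests on a false identification.} The discrete triples are not exactly the jump intervals. At a G/R/L row, a jump of magnitude $a$ equals the unexplored child's base length \emph{plus} the length $y$ of a horizontal boundary segment of the disk. This is the gap $b-u-v$ in a G event and $u-b-v$ in an R/L event. Moreover $y=a$ with positive probability (empty child). So the position of $J$ and the test $|J|>\delta$ are not functions of the length path, and $J_1$ convergence alone does not give convergence of $\cC_\delta^{(n)}$.

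You need the maximal gap over all rows before $\tau_\delta^{(n)}$ to vanish in probability. Since there are about $Bt$ rows by time $t$, this requires a tail bound uniform in $a$. The conditional law of $y$ given $a$ is $\bigl(Be^{-By/2}Z(B(a-y))\mathbf 1_{\{0\le y<a\}}\dd y+\q^2e^{-Ba/2}\delta_a(\dd y)\bigr)/k(Ba)$. Together with $Z(u)\asymp k(u)\asymp(1+u)^{-2}$, this gives $\P(y>\varepsilon\mid a)\le Ce^{-cB\varepsilon}$. A union bound then gives $CBte^{-cB\varepsilon}\to0$. Combined with the a.s.\ absence of limiting jumps of size exactly $\delta$, this yields the eventual bijection of triples.

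A smaller point: no row moves both coordinates. By \eqref{eq-discrete-one-row}, each row changes one coordinate or terminates, so your G-event subtlety does not arise.
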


\begin{proof}
	Throughout the proof, write $B=B_n$. Recall $Z$ and $k$ from Section~\ref{subsec-discrete-vertical}. We record 
	\begin{equation}\label{eq-z-bounds}
		\frac{c}{(1+u)^2}\le Z(u)\le\frac{C}{(1+u)^2}, \qquad k(u)\asymp(1+u)^{-2},
		\qquad
		k(u)\sim\frac1{\pi u^2}\sim2Z(u).
	\end{equation}
The first bound follows from Lemma~\ref{lem-partition}. 
	Splitting the convolution defining $k$ at $v=u/2$ gives the upper bound,
	and dominated convergence with $u-v$ fixed gives the
	asymptotic. The lower bound follows from the asymptotic
	and the atom of $M$ at zero.
	
	\smallskip
	\noindent\textbf{Step 1: Convergence of the unweighted L\'evy processes.}
	Let $X^{(n)}$ be a pair of independent symmetric
	compound Poisson processes, each with L\'evy measure
	$B^2k(B|z|)\dd z$, and let $ X$ be a pair of
	independent standard symmetric Cauchy processes.
	For each $\theta\in\R$, \eqref{eq-z-bounds} and dominated convergence give
	\[
	\int_\R(1-\cos(\theta z))B^2k(B|z|)\dd z
	\longrightarrow
	\int_\R\frac{1-\cos(\theta z)}{\pi z^2}\dd z
	=|\theta|,
	\]
	since $B^2k(B|z|)\le C/z^2$.
	The L\'evy convergence theorem
	\cite[Corollary~VII.3.6]{jacod-shiryaev} therefore gives
	$X^{(n)}\Longrightarrow X$ in the joint $J_1$ topology.
	Couple their increments so that this convergence holds almost
	surely, take an independent $U$ uniform on $(0,1)$, and start
	the processes at $(b_nU,b_n(1-U))$ and $(bU,b(1-U))$,
	respectively. Then $ X^{(n)}\to X$
	almost surely in the joint $J_1$ topology.
	
	Let $S_n,S$ be their respective exit times from $O_\delta:= \{(\ell,r)\in(0,\infty)^2:\ell+r>\delta\}$.
	Almost surely, $S<\infty$, the path before $S$ stays a
	positive distance from $\partial O_\delta$, and the exit
	jump lands outside $\ol{O_\delta}$. Thus the $J_1$ time change identifies $S_n$ and $S$. We conclude 
	\begin{equation}\label{eq-raw-stopped-convergence}
		(X^{(n)})_{[0,S_n]} \longrightarrow (X)_{[0,S]} \quad \text{ almost surely }
	\end{equation}
	in the sense that $S_n \to S$ a.s.\ and $(X^{(n)}(S_n \cdot))_{[0,1]} \to (X(S \cdot))_{[0,1]}$ a.s.\ in the joint $J_1$ topology. 
	
	\smallskip
	\noindent\textbf{Step 2: Convergence of the length processes.}
Let $h_B(\ell,r)=2\pi B^2Z(B(\ell+r))$ and
	$h(\ell,r)=(\ell+r)^{-2}$, both extended by zero outside
	the positive quadrant. By Lemma~\ref{lem-partition},
	$h_B\to h$ locally uniformly in $(0, \infty)^2$.
	Under the preceding coupling, define the densities
	\[
	D_n=\frac{h_B( X^{(n)}_{S_n})}{h_B(X^{(n)}_0)},
	\qquad
	D=\frac{h( X_S)}{h(X_0)}, 
	\]
	 so \eqref{eq-raw-stopped-convergence} gives
	$D_n\to D$ almost surely.
	
	By Lemma~\ref{lem-discrete-h-transform}, weighting the stopped
	path law on the left of \eqref{eq-raw-stopped-convergence}
	by $D_n$ gives the law of 
	$(L^{(n)}, R^{(n)})_{[0,\tau_\delta^{(n)}]}$ 
	restricted to $\{\tau_\delta^{(n)}<T^{(n)}\}$. Similarly, by \eqref{eq-tagged-kernel} weighting the stopped path law on the right of \eqref{eq-raw-stopped-convergence} by $D$ gives the law of $(L, R)_{[0,\tau_\delta]}$. 
In particular,
	$\E[D_n]=\P(\tau_\delta^{(n)}<T^{(n)})\le1$, and, by  Lemma~\ref{lem-tagged-paths}, $\E [D]=\P(\tau_\delta<T)=1$.
	Fatou's lemma gives $\E [D_n]\to1$, and hence
	$D_n\to D$ in $L^1$ by Scheff\'e's lemma. Therefore 
	\begin{equation}\label{eq-stopped-length-convergence}
		(L^{(n)},R^{(n)})_{[0,\tau_\delta^{(n)}]} \Longrightarrow (L, R)_{[0,\tau_\delta]}. 
	\end{equation}

	\smallskip
	\noindent\textbf{Step 3: Convergence of the child attachment data.} At each jump of the process $(L^{(n)}, R^{(n)})$ before $T^{(n)}$, the size of the jump is the sum of the base length of the unexplored child (if nonempty) and the length of a horizontal boundary segment; we first show that these segment lengths tend to zero uniformly in probability up until $\tau_\delta^{(n)}$. 
At a jump before $T^{(n)}$ of magnitude $a$, the conditional law of this horizontal segment length is 
\begin{equation}\label{eq-gap-law}
	\frac{
		B e^{-By/2}Z(B(a-y))\mathbf1_{\{0\le y<a\}}\dd y
		+\q^2e^{-Ba/2}\delta_a(\dd y)
	}{k(Ba)}.
\end{equation}
	By \eqref{eq-z-bounds}, 
	$\frac{Z(B(a-y))}{k(Ba)}\le C(1+By)^2$.
	Including the atom gives
	$\P(y>\varepsilon\mid a)\le Ce^{-cB\varepsilon}$
	uniformly in $a$.
	Since row events occur at rate $B$, for each $t,\varepsilon>0$,
	\[
	\P\bigl(\text{some jump before }t\wedge T^{(n)}
	\text{ has }y>\varepsilon\bigr)
	\le CBt\,e^{-cB\varepsilon}\longrightarrow0.
	\]
Since  $\tau_\delta^{(n)}\Longrightarrow\tau_\delta<\infty$, the maximum of these segment lengths up until time $\tau_\delta^{(n)}$ tends to zero in probability. 
	
	Therefore, up to errors tending uniformly to zero in probability,
	the discrete child intervals and base lengths are given by the same functions of the jumps of $(L^{(n)}, R^{(n)})_{[0, \tau_\delta^{(n)}]}$ as in the continuum setting 
	Definition~\ref{def-jt-exploration-data}, while the orientation formula holds exactly.
	The joint $J_1$ convergence \eqref{eq-stopped-length-convergence} matches the finitely many jumps
	larger than $\delta$. This gives convergence of the
	child records jointly with the stopped length paths. 
	With \eqref{eq-stopped-length-convergence}, this proves convergence of the full exploration data.
\end{proof}

\subsection{Maximal widths}
\label{subsec-discrete-maximal-width}

We verify the second hypothesis of Theorem~\ref{thm-exploration-criterion}
by bounding the base lengths throughout the complete row-by-row
exploration with no cutoff. 

The proof of Lemma~\ref{lem-width} uses a discrete analogue of the martingale 
of \cite[Proposition~3.7]{ads-growth-fragmentation}.

\begin{lemma}\label{lem-width}
	For $R\ge b>0$, let $E_R$ be the event that some explored disk  in the complete row-by-row exploration  has base length at least $R$. Then 
$\P_b^{(1)}
(E_R)
		\le Z(R)/Z(b)$.
\end{lemma}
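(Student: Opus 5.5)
We use a discrete additive martingale built from the partition function, in the spirit of \cite[Proposition~3.7]{ads-growth-fragmentation}. Run the complete row-by-row exploration of $\cS\sim\P_b^{(1)}$ as a branching process in which each step reveals the first row of one unexplored disk (in some fixed order, e.g.\ breadth-first). After $m$ steps let $\mathcal U_m$ be the multiset of base lengths of the current unexplored nonempty disks, and set
\[
\mathcal M_m=\sum_{u\in\mathcal U_m}\frac{1}{Z(u)} .
\]
Initially $\mathcal M_0=1/Z(b)$. The key claim is that $\mathcal M_m$ is a nonnegative supermartingale.

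Conditionally on the past, replacing one disk of base length $u$ by its children changes $\mathcal M$ by $-1/Z(u)+\sum_{\text{children }c}1/Z(c)$. By Proposition~\ref{prop-kernel}, the children of a disk of base $u$ are distributed so that the expected value of $\sum_c 1/Z(c)$ equals
\[
\frac{1}{Z(u)}\Big[\iint_{u'+v<u}e^{-(u-u'-v)/2}\big(\mathbf 1_{u'>0}+\mathbf 1_{v>0}\big)\,M(\dd u')M(\dd v)+2\iint_{u'>u+v}e^{-(u'-u-v)/2}\big(1+\mathbf 1_{v>0}\big)\,\dd u'\,M(\dd v)\Big],
\]
where the factor $1/Z(c)$ cancels each $Z(c)\dd c$ in the kernel. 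The supermartingale property thus reduces to showing this bracket is at most $1$. This is the hard step: on its face, the R/L integral over $u'$ diverges (it gives $\int_{u+v}^\infty e^{-(u'-u-v)/2}\dd u'=2$ per child, hence is finite, but it must combine with the G part). So we must compute it explicitly. The G part equals $2\int_0^u k(u-u')\,\cdot$\,(mass), and the R/L part equals $2\int_{[0,\infty)}(1+\mathbf 1_{v>0})\,2\,M(\dd v)$, which is already of order $|M|\cdot 8=1$. So the inequality is tight and has to be obtained from Lemma~\ref{lem-partition}, using $|M|=1/8$ and $\int Z=1/8-\q^2$. If the raw weight $1/Z$ fails to give a supermartingale, I would try instead the harmonic function for the empty-child-killed chain, namely $1/Z$ applied only to children on which the explored disk continues. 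I expect the computation to reduce to the identity $\int_0^\infty Z=\tfrac18-\q^2$, possibly as an inequality because the G-event term is dominated. Failing that, I would fall back on the exact one-branch harmonicity already proved in Lemma~\ref{lem-discrete-h-transform}, where $h_B\propto Z$ is harmonic for the pair dynamics, and use it along each spine.

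Granting the supermartingale property, the conclusion is optional stopping. Let $\rho$ be the first step at which some disk of base length at least $R$ is explored. If the initial disk already has $b\ge R$, then $\rho=0$, and the bound reads $1\le Z(R)/Z(b)$, which holds since $Z$ is decreasing by \eqref{eq-partition}; so the relevant case is $b<R$. At time $\rho$ we have $\mathcal M_\rho\ge 1/Z(R)$, again because $Z$ is decreasing. Since the supermartingale is nonnegative, optional stopping (via Fatou if $\rho=\infty$) gives
\[
\P(E_R)/Z(R)\le \E[\mathcal M_{\rho};\rho<\infty]\le \mathcal M_0=1/Z(b),
\]
which is the claim.
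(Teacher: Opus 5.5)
\textbf{Verdict: genuine gap.} The one-step supermartingale inequality, which is the whole content of the lemma, is left unproved, and the bracket you write for it is incorrect.

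Your overall architecture matches the paper's. The process $\sum 1/Z$ over unexplored nonempty children is a nonnegative supermartingale started at $1/Z(b)$. A maximal inequality then gives the bound because $Z$ is decreasing. The paper uses Doob's maximal inequality; your optional stopping at $\rho$ is equivalent, provided $\rho$ is the first step at which an \emph{unexplored} child of base length $\ge R$ is present, not the step at which it is explored and removed.

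The missing step is the inequality $Z(u)\,\E_u^{(1)}\sum_c Z(c)^{-1}\le 1$. You call it the hard step, leave it uncomputed, and list fallbacks. The one-spine $h$-transform of Lemma~\ref{lem-discrete-h-transform} would not by itself control all branches. Your displayed bracket is also wrong, because the cancellation of $1/Z(c)$ against the kernel was not actually carried out:
\begin{itemize}
\item For the G event, the term for the child $u'$ should be integrated against $\dd u'\,M(\dd v)$, not $M(\dd u')M(\dd v)$.
\item For the R/L events, the term for the opposite-orientation child of base $v>0$ should be integrated against $Z(u')\dd u'\,\dd v$, not $\dd u'\,M(\dd v)$.
\end{itemize}
With your bookkeeping the R/L part alone comes to $4M([0,\infty))+4M((0,\infty))=1-4\q^2$. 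That is why the budget looked nearly exhausted before the G event was counted, and why you doubted the inequality.

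The missing computation is short. Use the correct measures and the symmetry of the G event, and integrate out the exponential factors; each such integral equals $2(1-e^{-\cdot})\le 2$. For the G event this gives
\[
Z(u)\,\E_u^{(1)}\bigl[\mathbf 1_{\mathrm G}\textstyle\sum_c Z(c)^{-1}\bigr]=4\int_{[0,u)}\bigl(1-e^{-(u-v)/2}\bigr)M(\dd v)\le 4M([0,u)).
\]
For each of R and L it gives
\[
2M([0,\infty))+2\int_u^\infty\bigl(1-e^{-(u'-u)/2}\bigr)Z(u')\dd u'\le 2M([0,\infty))+2M([u,\infty)).
\]
The key point is that the G-event children and the opposite-orientation R/L children range over complementary sets, $[0,u)$ versus $[u,\infty)$. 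Hence the total is at most $4M([0,u))+4M([u,\infty))+4M([0,\infty))=8M([0,\infty))=1$, using $|M|=1/8$ from Lemma~\ref{lem-partition}. With this bound in place, the rest of your argument goes through.
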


\begin{proof}
	We use the transition kernels from  Proposition~\ref{prop-kernel}. 
	Let $b_i>0$ be the {base lengths} of the nonempty children at the
	first row. We claim that
	\begin{equation}\label{eq-linearization}
		\E_b^{(1)}\sum_i Z(b_i)^{-1}
		\le Z(b)^{-1}.
	\end{equation}
	
	For a $\mathrm G$ event, the two child contributions are equal
	by symmetry. Counting the child of {base length} $u>0$ and using that $M(\dd u)=Z(u)\dd u$ on $(0,\infty)$,
	\[
	\begin{aligned}
		Z(b){\E_b^{(1)}}\left[
		\mathbf 1_{\mathrm G}\sum_{i} Z(b_i)^{-1}
		\right] &=
		2Z(b)\int_{[0,b)}
		\int_0^{b-v}
		\frac1{Z(u)}
		\frac{e^{-(b-u-v)/2}}{Z(b)}
		Z(u)\dd u \, M(\dd v)
		\\&=
		4\int_{[0,b)}
		\left(1-e^{-(b-v)/2}\right)M(\dd v) \le 4M([0,b)).
	\end{aligned}
	\]
	For an $\mathrm R$ event, we similarly compute 
	\[
	\begin{aligned}
		Z(b){\E_b^{(1)}}&\left[
		\mathbf 1_{\mathrm R}\sum_i Z(b_i)^{-1}
		\right]
		=
		\int_{[0,\infty)}M(\dd v)
		\int_{b+v}^{\infty}
		e^{-(u-b-v)/2}\dd u
		+
		\int_b^\infty Z(u)\dd u
		\int_0^{u-b}
		e^{-(u-b-v)/2}\dd v\\
		&\qquad \qquad \quad =
		2M([0,\infty))
		+2\int_b^\infty
		\left(1-e^{-(u-b)/2}\right)Z(u)\dd u
		\le 2M([0,\infty)) + 2M([b,\infty)).
	\end{aligned}
	\]
	By reflection, the same formula holds with $\mathrm L$ in place of $\mathrm R$.
	
	Adding the three event contributions gives
	$Z(b){\E_b^{(1)}}\sum_i Z(b_i)^{-1}
	\le 8M([0,\infty))=1$, where we used $M([0,\infty))=1/8$. Thus
	\eqref{eq-linearization} holds.
	
	Finally, carry out the complete exploration by, for instance, always exploring the largest unexplored child next. Let $S_k$ be
	the sum of $1/Z$ over the unexplored children after
	$k$ steps. By Proposition~\ref{prop-kernel} and \eqref{eq-linearization},
	$S_k$ is a nonnegative supermartingale with
	$S_0=Z(b)^{-1}$. Since $Z$ is decreasing, {the event
		$E_R$} implies
	$\sup_k S_k\ge Z(R)^{-1}$. Doob's maximal inequality therefore gives
	\[
	{\P_b^{(1)}}
(E_R)
	\le
	{\P_b^{(1)}}
	\Big(\sup_{k\ge0}S_k\ge Z(R)^{-1}\Big)
	\le \frac{Z(R)}{Z(b)}. \qedhere
	\]
\end{proof}

\begin{lemma}\label{lem-maximal-width-tightness}
	Let $B_n\to\infty$ and $\cS^{(n)}\sim\P_1^{(B_n)}$.
	Let $W_{\max}^{(n)}$ be the supremum of the lengths of intrinsic
	horizontal segments in the interior of $\cS^{(n)}$,
	with the horizontal direction parallel to its base.
	Then the sequence $(W_{\max}^{(n)})$ is tight.
\end{lemma}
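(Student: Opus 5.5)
Under $\P_1^{(B_n)}$, lengths are those of a Boltzmann disk of base length $B_n$ divided by $B_n$. Hence $W_{\max}^{(n)}=W/B_n$, where $W$ is the maximal horizontal width of a sample from $\P_{B_n}^{(1)}$. It therefore suffices to show that for every $\varepsilon>0$ there is $K$ with $\P_{B}^{(1)}(W\ge KB)\le\varepsilon$ for all large $B$.

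The plan is to reduce a bound on $W$ to a bound on base lengths in the complete row-by-row exploration, and then apply Lemma~\ref{lem-width}. The key geometric claim is that every intrinsic horizontal segment in the interior of $\cS$ lies in the union of the first-row rectangles $\Box$ of explored disks whose bases are parallel to it and lie on a common horizontal line. One checks this recursively. In the row-by-row decomposition of Proposition~\ref{prop-kernel}, the cuts are horizontal segments drawn from concave corners, and every piece (the rectangle $\Box$ or a child) is bounded by these cuts and by boundary edges. A horizontal segment at height strictly between the bottom and top of $\Box$ stays inside $\Box$, since the vertical sides of $\Box$ are boundary edges or lie on vertical rays. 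This needs care: in the row-by-row decomposition only horizontal cuts are made, so the sides of $\Box$ are boundary edges of $\cS$, and a horizontal segment at such a height cannot cross them. A segment at a height above $\Box$ enters a child, where the claim follows by recursion; children with opposite orientation are still rectilinear disks with horizontal bases. A segment lying exactly along a cut line has measure zero, but it is handled by taking limits from nearby heights, which suffices for the supremum.

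It follows that the length of such a segment is at most the width of the rectangle $\Box$ of the child containing it, and that width is at most that child's base length. Therefore $W\le\sup\{\text{base lengths of all explored disks}\}$. This also uses that the exploration terminates, since the rigid quadrangulation is finite.

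Applying Lemma~\ref{lem-width} with $b=B$ and $R=KB$ then gives
\[
\P_B^{(1)}(W\ge KB)\le \frac{Z(KB)}{Z(B)}\longrightarrow K^{-2}\quad\text{as }B\to\infty,
\]
using $Z(u)\sim(2\pi u^2)^{-1}$ from Lemma~\ref{lem-partition}. Taking $K$ large proves tightness.

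The main obstacle is making the geometric containment rigorous. One must verify that a maximal horizontal segment in the interior, which may run along horizontal boundary sides or pass through concave corners, cannot be longer than the base of a single explored piece. I would argue this by induction on the number of corners: a segment inside $\Box$ is bounded by the width of $\Box$, which is at most $b$, and a segment meeting the interior of a child lies entirely in that child because the child's base line is itself a cut. If the inductive argument only bounds segments by the sum of a few bases, I would instead bound by $2\sup$ of the base lengths, which still gives tightness.
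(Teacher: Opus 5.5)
Your proposal is correct and follows the paper's proof. The paper likewise notes that $\{W_{\max}^{(n)}\ge R\}$ forces some explored disk in the complete row-by-row exploration of the unscaled disk to have base length at least $B_nR$. It then applies Lemma~\ref{lem-width} with $Z(u)\sim(2\pi u^2)^{-1}$ to get $\P(W_{\max}^{(n)}\ge R)\le Z(B_nR)/Z(B_n)\to R^{-2}$. Your geometric argument spells out the inclusion the paper states without proof: only horizontal cuts are made, so every rectangle has its vertical sides on $\partial\cS$, and its width equals the base length of the explored disk it sits on.
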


\begin{proof}
	The event $\{W_{\max}^{(n)}\ge R\}$ implies the event $E_{B_nR}$ (defined in Lemma~\ref{lem-width})
	for the unscaled disk. Hence, by Lemma~\ref{lem-width} and
	\eqref{eq-partition}, for $R\ge1$ we have 
		$\P(W_{\max}^{(n)}\ge R)
		\le {Z(B_nR)}/{Z(B_n)}$.
		By Lemma~\ref{lem-partition}, this ratio
		converges to $R^{-2}$ as $n\to\infty$.
		This implies tightness.
\end{proof}

\section{Proof of the convergence criterion}
\label{sec-developing-convergence}
Recall that a developing map of a flat surface
$(\D_+,e^{2\sigma(z)}|dz|^2)$ is a holomorphic function
$F:\D_+\to\C$ satisfying
$\sigma(z)=\log|F'(z)|$ for $z\in\D_+$.
It is unique up to rotation and translation.
	
In this section we prove Theorem~\ref{thm-exploration-criterion}. It is an immediate consequence of a variant phrased in terms of the developing maps. 

\begin{theorem}\label{thm-developing-convergence}
	Suppose $\cS^{(n)}$ and $\cS$ are as in
	Theorem~\ref{thm-exploration-criterion}. Parametrize these flat surfaces
	by $\D_+$ so that the base corresponds to $[-1,1]$ and its
	metric midpoint to $0$. Let $F^{(n)},F:\D_+\to\C$ be their
	developing maps, normalized such that they fix 0 and have positive derivative there. 
	Extend $F^{(n)}$ and $F$  holomorphically to
	$\D$ by Schwarz reflection across the base. Then $F^{(n)}$ converges in law to $F$ with the  topology of locally uniform convergence.
\end{theorem}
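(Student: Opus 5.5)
Use the criterion's hypotheses to reconstruct the developed picture of the surface from finitely many explored pieces, control what is left over with tightness, and then pass to conformal coordinates.

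\emph{Step 1 (coupling of recursive explorations).} Fix a small cutoff $\delta>0$. Run the recursive exploration with cutoff $\delta$ on $\cS^{(n)}$ and on $\cS\sim\P_1$, always exploring unexplored children in a fixed deterministic order determined by the data; for example, largest base first.

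By Lemma~\ref{lem-continuum-finite-forest}, the continuum recursion has a.s.\ finitely many steps. The branching Markov property says each child, given the data so far, has law $\mathsf Q^{(n)}_c$, respectively $\P_c$. Hypothesis~(i) holds for every $b_n\to b$. Together these let us argue inductively, via a Skorokhod coupling at each step, that the whole finite tree of exploration data converges jointly in law. This tree consists of the stopped length paths, the children's triples $(I,J,\varepsilon)$ and their placements in the plane. A little care is needed at steps where children have base length near $\delta$; since the continuum data a.s.\ has no child of base length exactly $\delta$, these are discarded consistently.

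From the data, the developed image of the union $K^{(n)}_\delta$ of all explored regions is determined as a union of horizontal segments placed in $\C$. The same holds for $K_\delta$, so $F^{(n)}(K^{(n)}_\delta)\to F(K_\delta)$ as swept regions.

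\emph{Step 2 (what remains is small).} After the recursion stops, every unexplored piece has base length at most $\delta$ in the continuum and in the discrete setting. By Lemma~\ref{lem-width}-type control, here hypothesis~(ii) applied inside each child via the branching Markov property and scaling, every horizontal segment inside a leftover child has length $o(1)$ in probability as $\delta\to0$, uniformly in $n$. Then each leftover piece develops inside a thin neighbourhood of its base. So $F^{(n)}(\D_+)$ lies within Hausdorff distance $\epsilon(\delta)\to0$ of $F^{(n)}(K^{(n)}_\delta)$, with high probability uniformly in $n$.

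Tightness~(ii) also bounds the diameter of the developed image. Hence the $F^{(n)}$, extended by Schwarz reflection and normalized as stated, form a normal family in distribution. Indeed, by the Koebe-type estimates for locally univalent maps with bounded image of a fixed neighbourhood of $[-1,1]$, $|F^{(n)\prime}|$ is locally bounded, and Montel's theorem gives tightness in $\Hol(\D)$.

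\emph{Step 3 (identification of subsequential limits).} Take a subsequential limit $\tilde F$. We must show $\tilde F\overset d=F$. The explored region $K_\delta$ is a union of straight chords carrying the intrinsic flat structure, and the explored regions of the $F^{(n)}$ converge together with their chord structure. Thus $\tilde F$ and $F$ induce the same flat surface up to an $\epsilon(\delta)$-thin region.

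Let $\delta\to0$. The flat surface, meaning the immersed polygonal disk with its base, is determined by the limiting chord foliation. By uniformization with the stated normalization (base to $[-1,1]$, midpoint to $0$), the developing map is a continuous function of the surface in the relevant topology. Carathéodory-type kernel convergence, adapted to immersed surfaces by cutting along the finitely many explored chords, then yields $\tilde F=F$ in law.

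The main obstacle is this last step: continuity of the uniformizing map with respect to convergence of immersed, non-injective flat surfaces. I would handle it by gluing. Each explored piece is a Jordan domain bounded by chords, Proposition~\ref{prop-all-time-bases} style, so it converges in the Carathéodory sense. The thin leftover children have small harmonic measure from compact sets, which is controlled by the $\epsilon(\delta)$ width bound. Combining these gives locally uniform convergence of the uniformizing maps, and hence of $F^{(n)}$.
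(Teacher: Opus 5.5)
There is a genuine gap, and it sits where you yourself put the main obstacle. Your identification step rests on Step~2, which does not follow from the hypotheses. Hypothesis (ii) concerns only $W_{\max}^{(n)}$ for $\cS^{(n)}\sim\mathsf Q_1^{(n)}$, and Theorem~\ref{thm-exploration-criterion} assumes no scaling relation between $\mathsf Q_c^{(n)}$ and $\mathsf Q_1^{(n)}$. So the branching Markov property transfers no width control to children of base $c\le\delta$.

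Even for Boltzmann disks, where scaling holds, the union bound fails. Summing the per-child bound $Z(BR)/Z(Bc)$ of Lemma~\ref{lem-width} over the leftover children gives $Z(BR)\,\E\sum_i Z(Bc_i)^{-1}$. The supermartingale of that lemma bounds this only by $Z(BR)/Z(B)\approx R^{-2}$, which does not improve as $\delta\to0$; in the continuum, the analogous quadratic sum $\sum_i c_i^2$ has exactly zero drift without cutoff.

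Two other steps are also off. First, (ii) does not bound the developed diameter: a $1\times H$ rectangle has $W_{\max}=1$. Your Montel argument should instead use Bloch's theorem, which gives $(1-|z|^2)|(F^{(n)})'(z)|\le C\,W^{(n)}_{\max}$. Second, you never show that subsequential limits are nonconstant, and without that no Carath\'eodory-type argument can identify them. Your harmonic-measure claim is also asserted rather than proved. What controls the conformal size of a leftover piece is the shortness of the crosscut attaching it, together with nondegeneracy of the limit, not its interior width.

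The missing idea is that the interior of the leftover pieces never needs to be controlled.
\begin{enumerate}
\item Drive the recursion by attachment lengths, not base lengths. Only attachments longer than $\delta$ are recorded in $\cC_\delta$, so (i) says nothing about children behind short attachments.
\item Then the double of the explored region is a domain $E_\delta^{(n)}\subset\D$ whose relative boundary consists of crosscuts mapped by $F^{(n)}$ to horizontal segments of length at most $\delta$.
\item What converges under (i) is not the developed image $F^{(n)}(K_\delta^{(n)})$, which forgets overlaps. It is the developing map $H_\delta^{(n)}=F^{(n)}\circ\psi_\delta^{(n)}$ of the glued explored region in its own uniformization $\psi_\delta^{(n)}:\D\to E_\delta^{(n)}$.
\item Schwarz's lemma gives $(F^{(n)})'(0)\ge(H_\delta^{(n)})'(0)\to H_\delta'(0)>0$, which is the nondegeneracy you are missing.
\item Suppose $F^{(n_j)}\to F_*$ locally uniformly with $F_*$ nonconstant, and $\delta_j\downarrow0$. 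A Hausdorff limit of boundary crosscuts of $E^{(n_j)}_{\delta_j}$ meeting $\ol{\D_r}$ would be a continuum reaching $\partial\D$ on which $F_*$ is constant, which is impossible. Hence $\D_r\subset E^{(n_j)}_{\delta_j}$ eventually (Lemma~\ref{lem-short}), and $\psi^{(n_j)}_{\delta_j}\to\mathrm{id}$.
\item Choose $n_j$ so that $H^{(n_j)}_{\delta_j}$ is within $1/j$ in law of $H_{\delta_j}$. Then $H^{(n_j)}_{\delta_j}\to F_*$ and $H_{\delta_j}\to F$, so $F_*\overset d=F$.
\end{enumerate}
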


Given this we can pass to conformal factors using
${\sigma_{\cS^{(n)}}=\log|(F^{(n)})'|}$ and
$\sigma_{\cS}=\log|F'|$.

\begin{proof}[Proof of Theorem~\ref{thm-exploration-criterion} given Theorem~\ref{thm-developing-convergence}]
		By Theorem~\ref{thm-developing-convergence},
	$F^{(n)}\Longrightarrow F$ in $\Hol(\D)$.
	Since $F'$ is nowhere zero, $|F'|$ is uniformly bounded away from zero on each compact subset of $\D$.
	Cauchy's formula and the continuous mapping theorem then give
	\[
	\log|(F^{(n)})'|\Longrightarrow\log|F'|
	\qquad\text{in }C^\infty_{\mathrm{loc}}(\D).
	\]
	Using
	$\sigma_{\cS^{(n)}}=\log|(F^{(n)})'|$
	and $\sigma_{\cS}=\log|F'|$,
	and restricting to $\D_+\cup(-1,1)$, proves the result.
\end{proof}

\subsection{Tightness from width bounds}
\label{subsec-width-tightness}

We first obtain tightness of the maps $F^{(n)}$. This will allow us to take a subsequential limit, which we will eventually identify with $F$. 

\begin{proposition}
	\label{prop-bloch-width}
	{Let $W_{\max}^{(n)}$ denote the supremum of the lengths of horizontal chords of $\cS^{(n)}$.} There is a universal constant $C>0$ such that 
	\begin{equation}\label{eq-compact-width-bound}
		|{F^{(n)}}(z)|
		\le C W_{\max}^{(n)}\log\frac{1+|z|}{1-|z|} \qquad \text{ for all } z \in \D.
	\end{equation}
	In particular, hypothesis \textup{(ii)} of
	Theorem~\ref{thm-exploration-criterion} (tightness of the $W_{\max}^{(n)}$) implies that $(F^{(n)})$ is tight
	in $\Hol(\D)$.
\end{proposition}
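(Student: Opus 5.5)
Our plan is to show that the width bound controls the Bloch-type seminorm of $F^{(n)}$ in the hyperbolic metric of $\D$, and then to integrate along radii. Write $F=F^{(n)}$ and $W=W_{\max}^{(n)}$. By Schwarz reflection the extended surface on $\D$ is the double of $\cS^{(n)}$ across its base, and $F$ is a local isometry from $(\D,|F'|^2|dz|^2)$ to $\C$. The key claim is a pointwise estimate
\[
(1-|z|^2)\,|F'(z)|\le C\,W\qquad\text{for all }z\in\D .
\]
Integrating this from $0$ to $z$ and using $F(0)=0$ gives
\[
|F(z)|\le CW\int_0^{|z|}\frac{\dd s}{1-s^2}=\tfrac{C}{2}W\log\frac{1+|z|}{1-|z|},
\]
which is \eqref{eq-compact-width-bound}.

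To prove the pointwise estimate at $z\in\D_+$ (the case of the lower half-disk follows by reflection), we compare with the largest flat disk embedded around $z$. Let $\rho(z)$ be the supremum of $\rho$ such that the developed flat disk of radius $\rho$ about $F(z)$ lifts to an embedded neighborhood of $z$ in the doubled surface, i.e.\ the injectivity/exponential-map radius. We then argue in two steps.

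\textbf{Step 1.} We show $|F'(z)|(1-|z|^2)\le 4\rho(z)$. Let $\phi$ be the conformal map from the Euclidean disk of radius $\rho(z)$ onto its lift $\Omega\ni z$. Since $\Omega\subset\D$, applying Schwarz--Pick / the Koebe estimate to $\phi^{-1}$ composed with $F$ compares the hyperbolic density of $\D$ at $z$ with that of $\Omega$. The density of $\Omega$ equals $|F'(z)|/\rho(z)$, and it dominates the density $2/(1-|z|^2)$ of $\D$ by monotonicity. This yields the bound with constant $2$.

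\textbf{Step 2.} We show $\rho(z)\le C W$, and this is the step where we expect the main obstacle. The lift of the flat disk of radius $\rho$ contains a horizontal segment of length $2\rho$ through $z$. If this segment lies in one copy, this already gives $2\rho\le W$. But the lift may straddle the base, since the double is glued along $[-1,1]$. In that case the horizontal segment through $z$ may cross the reflected copy, and horizontal in one copy is not horizontal in the other after reflection: the reflection is an anti-isometry fixing the base, so horizontal directions are preserved up to sign. We therefore plan to decompose the segment into at most a bounded number of pieces, each a horizontal segment inside $\cS^{(n)}$ or its mirror (a crossing of the base is along the base direction only when the segment is the base itself, of length $1\le W$). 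This gives $2\rho\le 2W$ up to constants. If the crossings cause trouble, we instead use a vertical-free argument: inside the disk of radius $\rho$ pick a horizontal chord at a height avoiding the base line, of length at least $\rho$, lying entirely in one copy, which gives $\rho\le W$.

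Tightness then follows from the standard argument. The bound \eqref{eq-compact-width-bound} gives uniform bounds on compacts whenever $W_{\max}^{(n)}$ is bounded, so by Montel's theorem, for each $R$ the set $\{F : |F(z)|\le CR\log\tfrac{1+|z|}{1-|z|}\}$ is compact in $\Hol(\D)$. Since $\P(W_{\max}^{(n)}>R)$ is uniformly small by hypothesis (ii), the sequence $(F^{(n)})$ is tight in $\Hol(\D)$.
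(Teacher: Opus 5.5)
\textbf{The gap is in Step 1.} The inequality is derived in the wrong direction, and as a general statement it is false.

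If $\Omega\subset\D$ is the lift of the developed disk of radius $\rho(z)$ about $F(z)$, its hyperbolic density at $z$ is $\lambda_\Omega(z)=2|F'(z)|/\rho(z)$. Monotonicity under $\Omega\subset\D$ gives $\lambda_\Omega(z)\ge\lambda_{\D}(z)=2/(1-|z|^2)$, which rearranges to $\rho(z)\le(1-|z|^2)|F'(z)|$. Schwarz--Pick applied to the inverse branch $D(F(z),\rho(z))\to\D$ gives the same thing. This is an \emph{upper} bound on $\rho(z)$, the opposite of what you need.

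Nor can Step 1 be repaired by a better argument, since it uses nothing about $\cS^{(n)}$. For locally univalent maps, the embedded disk centered at $F(z)$ can be arbitrarily small relative to $(1-|z|^2)|F'(z)|$. Take $f(z)=(e^{Mz}-1)/M$: then $f'\neq0$ and $f'(0)=1$. The branch of $f^{-1}$ at $0$ is $\tfrac1M\log(1+Mw)$, which already has modulus $1$ at $w=-(1-e^{-M})/M$, so $\rho(0)<1/M$. Koebe-type control of a disk centered at the image point is simply unavailable when $F$ is not injective.

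\textbf{The fix, which is the paper's argument, is Bloch's theorem.} Apply it to $F\circ\phi$, where $\phi$ is a disk automorphism with $\phi(0)=z$. It yields an open set that $F\circ\phi$ maps bijectively onto a Euclidean disk of radius at least $|(F\circ\phi)'(0)|/C=(1-|z|^2)|F'(z)|/C$. That disk sits \emph{somewhere} in the image, not necessarily centered at $F(z)$.

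Your Step 2 never needs the disk to be centered at $F(z)$, so it applies verbatim to this one. Your fallback is correct: a horizontal chord of the developed disk at nonzero height avoids the preimage of the base, because the base develops into $\R$. That chord therefore lies in $\D_+$ or in its mirror, and is a horizontal segment of $\cS^{(n)}$ or of its reflection, so its length is at most $W$. This handles the base more carefully than the paper's one-line assertion.

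You can drop the first version of Step 2 entirely. Its worry that reflection breaks horizontality is unfounded, since reflection across the base, a horizontal line, preserves horizontal directions. With Step 1 replaced by Bloch's theorem, your radial integration and the Montel argument go through as written.
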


\begin{proof}
	In this proof, $C$ denotes a positive universal constant whose value may change from line to line. 
	Fix $z\in\D$ and any conformal automorphism $\phi: \D \to \D$ such that $\phi(0) = z$. 
	By Bloch's theorem
	\cite[Chapter~XII, Theorem~1.4]{conway-complex-analysis} applied to the holomorphic function ${F^{(n)}} \circ \phi$, 
	there exists an open set $U \subset \D$ which ${F^{(n)}\circ\phi}$ bijectively maps onto a Euclidean disk of radius $|({F^{(n)}} \circ \phi)'(0)| / C$. Therefore
	\[ W_{\max}^{(n)} \geq 2C^{-1} |({F^{(n)}} \circ \phi)'(0)| = 2C^{-1} (1-|z|^2) |{(F^{(n)})'}(z)| . \]
	Finally, using ${F^{(n)}}(z) = \int_0^z {(F^{(n)})'}(w) \dd w$ and the triangle inequality gives \eqref{eq-compact-width-bound}. 
	Montel's theorem and tightness of $(W_{\max}^{(n)})$
	then imply tightness of $({F^{(n)}})$ in $\Hol(\D)$.
\end{proof}

\subsection{Convergence of explored regions}
\label{subsec-explored-region-convergence}

Fix $\delta > 0$. 
In this section, we define a recursive exploration with a cutoff parameter $\delta$. This makes sense both for the discrete and continuum random surfaces. We show that the discrete explored regions converge in law to the continuum explored regions. 

We now introduce a recursive exploration with cutoff $\delta$ of $\cS$. Carry out a vertical exploration of $\cS$ stopped at cutoff $\delta$. {Among the unexplored children attached directly to the
explored region, choose the one with the longest attachment interval.
If this interval has length greater than $\delta$, carry out a vertical
exploration of that child with cutoff $\delta$, and repeat. Stop when
every attachment interval to the explored region has length at most
$\delta$. By Lemma~\ref{lem-continuum-finite-forest}, this takes only
finitely many steps almost surely.} Let $\cV_j$ be the vertical exploration data in step $j$, and let $M<\infty$ be the total number of steps. 

In the same way, one can carry out the recursive exploration with cutoff $\delta$ of $\cS^{(n)}$; let  $\cV_j^{(n)}$ be the vertical exploration data in step $j$, and let $M^{(n)} \leq \infty$ be the number of steps. 
By hypothesis \textup{(i)} of
Theorem~\ref{thm-exploration-criterion}, we can couple $\cS^{(n)}$ and $\cS$ so $\cV_1^{(n)} \to \cV_1$ a.s. In particular, the base length of the next child chosen by this attachment rule also converges; applying hypothesis \textup{(i)} and re-coupling yields $\cV_2^{(n)} \to \cV_2$ a.s., and so forth. Iterating, we conclude that $M^{(n)} = M$ for all large $n$ a.s., and $(\cV_1^{(n)}, \dots, \cV_{M^{(n)}}^{(n)}) \to(\cV_1, \dots, \cV_{M})$ a.s.

For $1 \leq j \leq M$, let $(L_j, R_j)_{[0,h_j]}$ be the length process of $\cV_j$. The interior of the explored region in the $j$th vertical exploration is described by 
\[\Omega_j = \{ x + i t \: : \: 0 < t < h_j \:\text{ and }\: - \min(L_j(t-), L_j(t)) < x < \min (R_j(t-), R_j(t))\}.\]
Similarly, define $\Omega_j^{(n)}$ from $\cV_j^{(n)}$. 
\begin{figure}[ht!]
	\centering
	\includegraphics[scale=0.7]{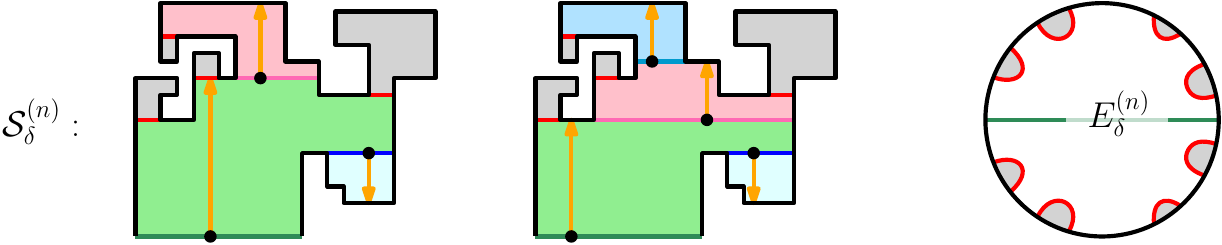}
	\caption{\textbf{Left.} The flat surface $\cS_\delta^{(n)} \subset \cS^{(n)}$ is the gluing of the colorful pieces, which represent the explored regions $\Omega_1^{(n)}, \dots, \Omega_{M^{(n)}}^{(n)}$ of the vertical explorations. The grey regions represent $\cS^{(n)} \backslash \cS_\delta^{(n)}$, and their red boundaries have lengths at most $\delta$.  The randomness in the choice of base points for the vertical explorations may yield different decompositions. \textbf{Right.} We uniformize the double of $\cS^{(n)}$ in $\D$. Then the double of $\cS_\delta^{(n)}$  is parametrized by a region $E_\delta^{(n)} \subset \D$. The red boundary arcs are mapped by $F^{(n)}$ to horizontal segments of length at most $\delta$.}
	\label{fig-core}
\end{figure}

Next, we assemble the explored region of $\cS$ by gluing $\ol {\Omega_1}, \dots, \ol{\Omega_M}$ together. Each of $\Omega_2, \dots, \Omega_M$ corresponds to a child that was explored; for $\Omega_j$ let $(I_j, J_j, \varepsilon_j)$ be its attachment data and $p_j$ the index of the previously explored piece to which it is attached, $j = 2, \dots, M$. Thus  $\ol{\Omega_j}$ should be glued to $\ol{\Omega_{p_j}}$, with $(I_j, J_j, \varepsilon_j)$ describing exactly how the gluing is to be implemented. Thus, we glue  $\ol {\Omega_1}, \dots, \ol{\Omega_M}$ to form a flat surface $\cS_\delta$. Similarly, glue $\ol {\Omega_1^{(n)}}, \dots, \ol{\Omega_M^{(n)}}$ to form $\cS_\delta^{(n)}$. Here $\cS_\delta$ and $\cS_\delta^{(n)}$ are the explored regions of $\cS$ and $\cS^{(n)}$. 

Recall that we have parametrized $\cS$ and $\cS^{(n)}$ in $\D_+$ so their bases correspond to $[-1,1]$ with midpoint at $0$, so their doubles are parametrized by $\D$. The doubles of $\cS_\delta$ and $\cS_\delta^{(n)}$ are then  parametrized by domains $ E_\delta,  E_\delta^{(n)} \subset \D$. Note that by definition, the connected components of  $\partial E_\delta \cap \D$ (resp.\ $\partial  E_\delta^{(n)} \cap \D$) are crosscuts of $\D$ which are mapped by $F$ (resp.\ $F^{(n)}$) to horizontal segments of length at most $\delta$. 
Let 
\begin{equation}\label{eq-explored-region-uniformization}
	\psi_\delta:\D\to E_\delta,
	\qquad
	\psi_{\delta}^{(n)}:\D\to E_\delta^{(n)}
\end{equation}
be the conformal maps fixing $0$ and having positive derivative there; in Section~\ref{subsec-core-exhaustion} we will see that $\psi_\delta$ and $\psi_\delta^{(n)}$ are both close to the identity. 
Set
\begin{equation}\label{eq-explored-region-map}
	H_\delta=F\circ\psi_\delta,
	\qquad
	H_{\delta}^{(n)}=F^{(n)}\circ\psi_{\delta}^{(n)}.
\end{equation}
By definition $H_\delta$ and $H_\delta^{(n)}$ are the developing maps of $\cS_\delta$ and $\cS_\delta^{(n)}$ which fix $0$ and have positive derivative there. 

Since $\cV_j^{(n)} \to \cV_j$, we have 
 the a.s.\ convergence of Hausdorff distances
$d_{\mathrm H}\bigl(
\partial\Omega_j^{(n)},\partial\Omega_j
\bigr)\to 0$, and the gluing data $(I_j^{(n)}, J_j^{(n)}, \varepsilon_j^{(n)})$ converges a.s.\ to $(I_j, J_j, \varepsilon_j)$ for $2 \leq j \leq M$. Crucially, the lengths $|I_j|$ of the attachment intervals are all greater than $\delta$.  This implies
\begin{equation}\label{eq-explored-region-map-convergence}
	\text{under the coupling above, almost surely }\quad  H_\delta^{(n)} \to H_\delta \quad \text{ in } \mathrm{Hol}(\D).
\end{equation}

As a quick consequence, Schwarz's lemma gives
\[\bigl(F^{(n)}\bigr)'(0) \geq \bigl(F^{(n)}\bigr)'(0)
\bigl(\psi_\delta^{(n)}\bigr)'(0) = \bigl(H_\delta^{(n)}\bigr)'(0) > 0.
\]
Under the coupling above,
$\bigl(H_\delta^{(n)}\bigr)'(0)\to H_\delta'(0)>0$
almost surely. Consequently,
\begin{equation}\label{eq-derivative-nondegeneracy}
	\lim_{c\downarrow0}\limsup_{n\to\infty}
	\P\bigl((F^{(n)})'(0)\le c\bigr)=0.
\end{equation}

\subsection{Proof of Theorem~\ref{thm-developing-convergence}}
\label{subsec-core-exhaustion}

We show that, along a convergent sequence of developing maps
with nonconstant limit, letting the exploration cutoff tend
to zero recovers the same limit.
For $r\in(0,1)$, write $\D_r=\{z:|z|<r\}$.

\begin{lemma}\label{lem-short}
	Suppose a sequence of holomorphic functions $f_j: \D\to \C$ converges locally uniformly to a nonconstant holomorphic function $f$. 
	Let $E_j \subset \D$ be domains containing 0 for $j \geq 1$, and let $\delta_j \downarrow 0$. For each $j$, suppose $\partial E_j \cap \D$ is a union of crosscuts $\gamma$ of $\D$ such that $\mathrm{diam}(f_j(\gamma)) \leq \delta_j$. Then for every $0 < r < 1$, we have $\D_r \subset E_j$ for all sufficiently large $j$. 
\end{lemma}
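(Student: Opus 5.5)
We argue by contradiction. Suppose that for some $r<1$ and infinitely many $j$ there is a point $z_j\in\D_r\setminus E_j$. Since $E_j$ is a domain containing $0$, the segment $[0,z_j]$ must meet $\partial E_j\cap\D$. Hence there is a crosscut $\gamma_j$ of $\D$ which is a component of $\partial E_j\cap\D$ and meets $\ol{\D_r}$. Pass to a subsequence along which $\gamma_j$ meets $\ol{\D_r}$ at a point $w_j\to w\in\ol{\D_r}$.

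The key point is that a crosscut of $\D$ through a point of $\ol{\D_r}$ must reach $\partial\D$ at both ends. Therefore $\gamma_j$ contains a connected piece joining $w_j$ to the circle $\partial\D_{\rho}$ for any fixed $\rho\in(r,1)$. Fix such a $\rho$, and let $\beta_j\subset\gamma_j\cap\ol{\D_\rho}$ be a connected arc joining $w_j$ to $\partial\D_\rho$. Its Euclidean diameter is at least $\rho-r>0$.

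By local uniform convergence on $\ol{\D_\rho}$, we have $\mathrm{diam}\, f(\beta_j)\le \mathrm{diam}\, f_j(\beta_j)+2\|f_j-f\|_{\ol{\D_\rho}}\le\delta_j+o(1)\to0$. Pass to a further subsequence along which the compact connected sets $\ol{\beta_j}$ converge in the Hausdorff sense to a compact connected set $K\subset\ol{\D_\rho}$. Then $K$ contains $w$, meets $\partial\D_\rho$, and has diameter at least $\rho-r$. By uniform continuity of $f$ on $\ol{\D_\rho}$, the diameter of $f(K)$ equals $\lim \mathrm{diam}\, f(\ol{\beta_j})=0$, so $f$ is constant on $K$. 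But $f-f(w)$ is a nonconstant holomorphic function on $\D$, so its zeros are isolated in $\D$ and hence finite in $\ol{\D_\rho}$. It cannot vanish on the infinite connected set $K$, which contains a continuum of points. This is the contradiction.

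The main obstacle is the topological step guaranteeing an arc of $\gamma_j$ from $\ol{\D_r}$ to $\partial\D_\rho$. This uses that crosscuts are, by definition, Jordan arcs in $\D$ whose two endpoints lie on $\partial\D$, so they cannot be trapped inside $\D_\rho$. A second, smaller point is that $[0,z_j]$ genuinely hits $\partial E_j\cap\D$ rather than escaping through $\partial\D$; this holds because $[0,z_j]\subset\D_r$. The Hausdorff-limit argument could alternatively be replaced by choosing points along $\beta_j$ at all radii in $[|w_j|,\rho]$ and using the identity theorem on a limit set with an accumulation point; either route suffices.
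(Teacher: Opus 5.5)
Your proof is correct and follows essentially the same route as the paper: argue by contradiction, take a crosscut $\gamma_j$ of $\partial E_j\cap\D$ meeting $\D_r$, pass to a Hausdorff limit continuum on which $f$ is constant, and contradict the identity theorem. The only difference is that you truncate to a subarc in $\ol{\D_\rho}$ with $\rho<1$, which makes the use of locally uniform convergence explicit. The paper instead takes the Hausdorff limit of the full closures $\ol{\gamma_j}$, which reach $\partial\D$, and argues with $f$ on $K\cap\D$.
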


\begin{proof}
	Suppose for the sake of contradiction that for some $0 < r < 1$, along a subsequence we have $\D_r \not \subset E_j$. Let $\gamma_j\subset \partial E_j \cap \D$ be a crosscut intersecting $\D_r$. Since $\mathrm{diam}(f_j(\gamma_j)) \to 0$, passing to a subsequence, $\ol{\gamma_j}$ converges under the Hausdorff metric to a  compact connected set $K$ intersecting $\ol{\D_r}$ and $\partial \D$ such that  $\mathrm{diam}(f(K\cap \D)) = 0$. Thus ${f|_{K\cap\D}}$ is constant, contradicting the fact that $f$ is a nonconstant holomorphic function. 
\end{proof}

\begin{proof}[Proof of Theorem~\ref{thm-developing-convergence}]
	Proposition~\ref{prop-bloch-width} shows the existence of subsequential limits of $F^{(n)}$. 
	Pick any convergent subsequence, relabel the sequence and couple using Skorokhod's representation theorem, so $F^{(n)}$ a.s.\ converges to some holomorphic function $F_*$.  We will show that $F_* \stackrel d= F$. 
	By \eqref{eq-derivative-nondegeneracy} $F_*$ is nonconstant a.s.
	
	The space $\Hol(\D)$ is Polish, hence there is a metric $d$ on the space of probability measures on $\Hol(\D)$ which metrizes weak convergence. 
	
	Fix any deterministic sequence $\delta_j$ with $\delta_j \downarrow 0$. For each fixed $j$, by \eqref{eq-explored-region-map-convergence} $H_{\delta_j}^{(n)}$ converges in law to $H_{\delta_j}$ as $n \to \infty$. Thus we can choose a subsequence $n_j$ such that 
	\begin{equation}\label{eq-metric}
		d(\cL(H_{\delta_j}^{(n_j)}), \cL(H_{\delta_j})) < 1/j \qquad \text{ for all }j, 
	\end{equation}
	where $\cL(X)$ denotes the law of $X$. 
	
	Now, recall the holomorphic maps $\psi_{\delta_j}: \D \to E_{\delta_j}$ and $\psi_{\delta_j}^{(n_j)}: \D \to E_{\delta_j}^{(n_j)}$ from \eqref{eq-explored-region-uniformization}. By Lemma~\ref{lem-short}, for any $0 < r < 1$, we have $\D_r \subset E_{\delta_j}$ and $\D_r \subset E_{\delta_j}^{(n_j)}$ for all sufficiently large $j$. Thus $\psi_{\delta_j}$ and $\psi_{\delta_j}^{(n_j)}$ converge a.s.\ to $\mathrm{id}$ in $\Hol(\D)$. 
	By \eqref{eq-explored-region-map} this implies $H_{\delta_j} \to F$ a.s.
	For any compact $K \subset\D$, picking $r < 1$ such that $K \subset \D_r$, we have for all sufficiently large $j$ that $\psi_{\delta_j}^{(n_j)} (K) \subset {\D_r}$, so 
	\[\sup_{z \in K} |H_{\delta_j}^{(n_j)}(z) - F_*(z)| \leq \sup_{w \in \D_r} |F^{(n_j)}(w) - F_*(w)| + \sup_{z \in K} |F_*(z) - F_*(\psi_{\delta_j}^{(n_j)} (z))| \to 0.\]
	Thus $H_{\delta_j}^{(n_j)} \to F_*$ a.s. Combining with \eqref{eq-metric}, we conclude that $F_* \stackrel d= F$.
\end{proof} 

\appendix

\section{Geometry of cuts of JT surfaces}
\label{app-exploration-geometry}

This appendix proves
Proposition~\ref{prop-all-time-bases}, Lemma~\ref{lem-actual-minimum}, and Proposition~\ref{prop-binary-geometric}.
We first construct the limiting arcs $\gamma_t^\pm$, then establish boundary
estimates and identify the horizontal chords at jumps.

\subsection{Construction of the limiting arcs}
\label{subsec-limiting-arcs}

Recall the conformal coordinates and notation of Section~\ref{subsec-strong-markov}.

\begin{lemma}\label{lem-light-lifts}
	Let $F:\ol{\D_+}\to\C$ be continuous, holomorphic in
	$\D_+$, and satisfy $F'(z)\ne0$ for every $z\in\D_+$.
	If $\gamma_n \subset \D_+$ are flow-line segments for $F$ whose developed
	lengths tend to zero, then
	$\operatorname{diam}\gamma_n\to0$, where diameter is measured
	in the Euclidean metric of $\D_+$.
\end{lemma}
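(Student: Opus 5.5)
Argue by contradiction with compactness. Suppose, after passing to a subsequence, $\operatorname{diam}\gamma_n\ge\varepsilon>0$ while the developed lengths $|F(\gamma_n)|\to0$. Each $\overline{\gamma_n}$ is a compact connected subset of $\overline{\D_+}$. By Blaschke selection, a further subsequence converges in the Hausdorff metric to a compact connected set $K\subset\overline{\D_+}$ with $\operatorname{diam}K\ge\varepsilon$. Because $F$ is uniformly continuous on $\overline{\D_+}$, and $F(\gamma_n)$ is a segment of length tending to zero, $F$ is constant on $K$; call the value $w_0$. So $K$ is a continuum in $\overline{\D_+}$ with at least two points on which $F\equiv w_0$.

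The first case is $K\cap\D_+\ne\emptyset$. Since $F'\ne0$ in $\D_+$, $F$ is locally injective there, so $F^{-1}(w_0)\cap\D_+$ is discrete. Take $z_0\in K\cap\D_+$ and a small disk $B$ around $z_0$ on which $F$ is injective. $K$ is connected with diameter at least $\varepsilon$, so it cannot equal $\{z_0\}$. By the boundary bumping theorem, the component of $K\cap\overline B$ containing $z_0$ meets $\partial B$, and is therefore a nondegenerate continuum. But it lies in $F^{-1}(w_0)\cap\overline{B}$, which is the single point $z_0$. This is a contradiction.

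The second case, $K\subset\partial\D_+$, is the main obstacle. Here $K$ is a nondegenerate arc of $\partial\D_+$ on which the continuous extension of $F$ is constant. If $K$ contains a nondegenerate subarc of the circular part of $\partial\D_+$, reflect across the circle: $F-w_0$ is bounded and holomorphic on $\D_+$ with continuous boundary values vanishing on an arc of positive length. By the Privalov uniqueness theorem (equivalently, $\log|F-w_0|$ would have integral $-\infty$ on that arc, contradicting the Jensen/Poisson inequality for bounded analytic functions after conformally mapping $\D_+$ to $\D$), $F\equiv w_0$. This contradicts $F'\ne0$. If instead $K$ contains a subarc of $[-1,1]$, the same argument applies after transporting $\D_+$ to $\D$ by a conformal map that extends homeomorphically to the closure. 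So in every case a nondegenerate boundary arc with constant boundary values forces $F$ to be constant. I would phrase this step once, via a Riemann map $\phi:\D\to\D_+$ continuous up to the boundary, and apply the F.\ and M.\ Riesz/Privalov uniqueness theorem to $F\circ\phi-w_0\in H^\infty(\D)$.

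Combining the two cases gives the contradiction, so $\operatorname{diam}\gamma_n\to0$. The main technical care lies in the boundary case and in checking that $K$ is nondegenerate. The latter is guaranteed by the lower bound on diameters, which passes to Hausdorff limits.
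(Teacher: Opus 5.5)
Your argument is correct and follows the same route as the paper. Both pass to a Hausdorff-limit continuum of positive diameter on which $F$ is constant, then rule out the interior case by holomorphicity (your local-injectivity argument from $F'\neq0$ does the same job) and the boundary case by a uniqueness principle. The paper handles the boundary arc by Schwarz reflection, which is more elementary than the Privalov/F.~and M.~Riesz theorem you invoke: since $F-w_0$ vanishes on the arc, it extends holomorphically across it and is then identically zero by the identity theorem. Your version is also valid, because $F$ is bounded and continuous up to $\partial\D_+$.
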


\begin{proof}
	Otherwise, after taking closures and a Hausdorff-convergent
	subsequence, the segments converge to a compact connected set
	$E$ of positive diameter on which $F$ is constant. If $E \cap \D_+ \neq \emptyset$, the holomorphicity of $F$ would force it to be constant, a contradiction. Thus $E$ lies on the boundary and contains a nontrivial
	boundary arc; Schwarz reflection again forces $F$ to be constant, a contradiction.
\end{proof}

\begin{proof}[Proof of Proposition~\ref{prop-all-time-bases}]
	Work on the probability-one event on which $F$ is continuous on
	$\ol{\D_+}$ (Proposition~\ref{prop-boundary-continuity}) 
	and the length process has a c\`adl\`ag extension (Lemma~\ref{lem-tagged-paths}). By construction  $U_q$ decreases while $V_q$ increases
	with $q$.
	Fix $0\le t<T$ and dyadic $q_n\downarrow t$, with $q_n>t$.
	Then $U_{q_n}$ increases to $U_{t+}$ and
	$(L_{q_n},R_{q_n})\to(L_t,R_t)$.
	Note that $L_t,R_t>0$ for all $0\leq t < T$.

	Let $\eta_n = \partial U_{q_n} \cap \partial \D_+$. Monotonicity implies $(\eta_n)$ is an increasing family of boundary arcs; call the limit $\eta$, and give $\eta_n$ and $\eta$ parametrizations such that $\eta_n$ converges uniformly to $\eta$. Next, by definition, 
	\[
	F(\gamma_{q_n}(s))=
	\begin{cases}
		i q_n+L_{q_n}s,&-1\le s\le0,\\
		i q_n+R_{q_n}s,&0\le s\le1.
	\end{cases}
	\]
	Since $L_{q_n},R_{q_n}$ are bounded,
	Lemma~\ref{lem-light-lifts} gives equicontinuity of the family of functions $(\gamma_{q_n})$; pass to a  subsequence along which it converges uniformly to some limit $ \gamma:[-1,1] \to \ol{\D_+}$. 
	It is simple, passes through $p_t$,
	and has left and right lengths $L_t,R_t$. Moreover, monotonicity implies that $\gamma$ intersects $\eta$ only at their endpoints, hence $\eta \cup \gamma$ is a Jordan curve. 
	Along this subsequence, $\eta_n \cup \gamma_{q_n}$ converges uniformly to $\eta \cup \gamma$, hence $U_{t+}$ is a Jordan domain having $\eta \cup \gamma$ as its boundary. This implies that all subsequential limits $\gamma$ agree; call this unique limit 
	$\gamma_t^+$. 
	Note that at each dyadic $0 <q<T$, right continuity of the lengths gives $F\circ\gamma_q^+=F\circ\gamma_q$; this and local injectivity of $F|_{\gamma_q}$ imply  $U_{q+}=U_q$.
	
The uniform convergence of $\gamma_{q_n}$ and the chosen
parametrizations of $\eta_n$ gives uniformly convergent
parametrizations of $\partial U_{q_n}=\gamma_{q_n}\cup\eta_n$
to $\partial U_{t+}=\gamma	_t^+\cup\eta$.
By \cite[Theorem~2.11]{pommerenke-boundary}, we conclude that
$\psi_{q_n}\to\psi_t$ uniformly on $\ol{\D_+}$. Thus, writing $F_s=F\circ\psi_s-is$, continuity of $F$ gives $F_{q_n}\to F_t$
 uniformly on $\ol{\D_+}$.
Schwarz reflection and Hurwitz’s theorem imply that \(F_t'\) is nonzero on $(-1,1)$, so $\gamma_t^+$ is a straight boundary arc for $U_{t+}$
 with its inherited flat metric.
 
 Similar arguments show the claims for $V_{t-}$ and $\gamma_t^-$, and $V_{t-} \cap U_{t+} = \emptyset$ follows immediately from the fact that $V_q\cap U_{q'}=\varnothing$ whenever $q<t<q'$.
The argument applies to every $t$ under the same deterministic
	hypotheses, giving the simultaneous assertion.
\end{proof}

\subsection{Rough-boundary geometry}
\label{subsec-rough-boundary}
In this section we rule out pathological behavior of the ``rough boundary'' $\partial \D_+ \backslash [-1,1]$. 
Take $F=F_{\pi,G,a}$ from \eqref{eq-primitive}.
All almost-sure assertions concern $G\sim\mu$ and hold for every $a\in\R$.

\begin{lemma}\label{lem-no-shelf}
	Almost surely no nontrivial open subarc of $\partial\D_+\setminus[-1,1]$ is mapped by $F$ into a straight segment.
\end{lemma}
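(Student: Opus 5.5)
Suppose an open subarc $\alpha$ of the rough boundary is mapped by $F$ into a straight segment. The plan is to reduce this to a countable union of events, each of which forces a nontrivial real-analytic identity on the Gaussian field $G$, and then to show each such event has probability zero.

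\textbf{Step 1: reduce to a condition on $G$ and boundary values.} If $F(\alpha)$ lies in a line $\{c+se^{i\beta}\}$, then $\Im(e^{-i\beta}F)$ is constant on $\alpha$. Since $F$ is continuous up to $\ol{\D_+}$ by Proposition~\ref{prop-boundary-continuity}, Schwarz reflection across the unit circle extends $F$ holomorphically across $\alpha$, with the reflection $z\mapsto 1/\bar z$. Then $F'=e^A w^0 e^{G(w)}/\sqrt{1-w^2}$ extends holomorphically across $\alpha$. After discarding the explicit nonvanishing factor $(1-w^2)^{-1/2}$, which is analytic near $\alpha$ away from $\pm1$, this means $G$ itself extends holomorphically across $\alpha$, since $\log F'$ is locally defined once $F'$ is analytic and nonvanishing near $\alpha$. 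Nonvanishing holds on the reflection by construction, but we need it on $\alpha$; failure there only makes $F'$ zero at isolated points, and we can shrink $\alpha$ to avoid them. So the event is contained in
\[
\bigcup_{\text{rational arcs } \alpha}\{G \text{ extends holomorphically to a neighborhood of } \alpha\}.
\]

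\textbf{Step 2: show $G$ a.s.\ does not extend across any arc.} Fix a rational arc. Holomorphic extension across $\alpha$ would make $\Re G(re^{i\theta})$ converge uniformly for $\theta\in\alpha$ as $r\uparrow1$ to a continuous function. However, $\Re G$ on circles of radius $r$ has variance $\tfrac12\sum_n r^{2n}/n\to\infty$, so a continuous limit is impossible. To make this rigorous I would use a 0-1 style argument. Pick a smooth bump $\chi$ supported in $\alpha$. Then the pairing $X_r=\int\chi(\theta)\Re G(re^{i\theta})\dd\theta$ is Gaussian with variance $\sum_n r^{2n}|\hat\chi(n)|^2/n$, which stays bounded, so this pairing is not enough on its own.

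Better is to use local oscillation. On the extension event, $\sup_{\theta\in\alpha'}|\partial_\theta \Re G(re^{i\theta})|$ stays bounded as $r\uparrow1$ on a slightly smaller arc $\alpha'$. Test against wavelet-like bumps $\chi_k$ at frequency $2^k$ localized in $\alpha'$. Boundedness of derivatives then gives pairings of size $O(2^{-k})\|\chi_k\|_{L^1}$. Meanwhile, the Gaussian pairings $\int \chi_k\,\Re G$ at $r\to1$ have standard deviation of order $\|\chi_k\|_{L^1}$, since the covariance is logarithmic. Moreover, these pairings for distinct dyadic scales are nearly independent. A Borel--Cantelli argument along many $k$ then shows the pairings are a.s.\ not $O(2^{-k})$ relative to that normalization.

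\textbf{Main obstacle.} The hardest step is making Step 2 clean. A simpler route I would try first is to note that extension across $\alpha$ implies the boundary distribution $\Re G|_\alpha$ is a smooth function. This contradicts the known a.s.\ fact that the log-correlated field $\sum G_n n^{-1/2}\cos(n\theta)$ is not in $H^{s}_{\mathrm{loc}}$ for any $s\ge0$ on any interval. That fact can be proved by localizing with a bump $\chi$ and computing that $\E\|\chi \Re G\|_{H^{0}}^2=\infty$, together with Gaussian concentration (Fernique/0-1 law) upgrading infinite expectation of the norm to almost sure infiniteness. Finally, take a countable union over rational arcs and rational $a$-independent statements, since $a$ only scales $F$.
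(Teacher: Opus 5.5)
Your proposal is correct. Step~1 is essentially the paper's reduction: boundary continuity from Proposition~\ref{prop-boundary-continuity}, Schwarz reflection across the arc, and stepping away from the isolated zeros of the extended $F'$.

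You diverge in Step~2, which you treat as the hard part. You attack it with multiscale pairings, or with a Sobolev non-regularity statement upgraded by the Gaussian 0-1 law and Fernique's theorem. The paper instead uses the pointwise observation you wrote down and then set aside, and that observation is already rigorous. For a fixed $\zeta$ on the rough boundary, $X_r=\Re G(r\zeta)$ is centered Gaussian with variance $-\tfrac12\log(1-r^2)-\tfrac12\log|1-r^2\zeta^2|\to\infty$. So for all $M$ and $r_0$,
\[
\P\Bigl(\sup_{r_0\le r<1}|X_r|\le M\Bigr)\le\inf_{r'\in[r_0,1)}\P(|X_{r'}|\le M)=0 .
\]
Hence $X_r$ a.s.\ has no finite limit, and this holds simultaneously on a countable dense set of $\zeta$. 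On the shelf event, the reflected $F'$ is holomorphic near the arc and not identically zero, so $F'(\zeta)\neq0$ at some dense-set point $\zeta$ of the arc. Then $\Re G(r\zeta)=\log|F'(r\zeta)|+\tfrac12\log|1-r^2\zeta^2|-a$ converges, a contradiction.

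In your own formulation it is even shorter. Once $G$ extends across a rational arc $\alpha$, $\Re G$ has a finite radial limit at the midpoint of $\alpha$, and this has probability zero for each of the countably many $\alpha$. No uniformity in $\theta$, no test functions, and no infinite-dimensional Gaussian measure theory are needed.

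Your Sobolev route does prove something stronger: the boundary trace of $\Re G$ is a.s.\ not in $L^2$ on any arc. It pays for this with the 0-1 law and Fernique. Your wavelet route also works, but it needs neither near-independence nor Borel--Cantelli. If $X_k$ denotes the $k$th pairing, then for fixed $C$ one has $\P(|X_k|\le C2^{-k}\ \forall k)\le\inf_k\P(|X_k|\le C2^{-k})=0$, since the pairings have variances bounded below. For this lemma, neither route buys anything over the one-point argument.
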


\begin{proof}
	For each fixed $\zeta\in\partial\D_+\setminus[-1,1]$,
	$\operatorname{Re}G(r\zeta)$ is centered Gaussian with variance
	$\Var(\operatorname{Re}G(r\zeta))\to\infty$ as $r\uparrow1$.
	Consequently, almost surely $\operatorname{Re}G(r\zeta)$ has no
	finite limit as $r\uparrow1$. This holds simultaneously for all
	$\zeta$ in a fixed countable dense set.
	
	By Proposition~\ref{prop-boundary-continuity}, $F$ is continuous on
	$\ol{\D_+}$. If $F$ mapped an open  arc of $\partial\D_+\setminus[-1,1]$ into a straight segment,
	Schwarz reflection would extend $F$ holomorphically through
	that arc. Since $F$ is nonconstant, its extension satisfies
	$F'(\zeta)\ne0$ at some $\zeta$ in the dense set. But then
	\[
	\operatorname{Re}G(r\zeta)
	=\log|F'(r\zeta)|+\frac12\log|1-r^2\zeta^2|-a
	\]
	has a finite limit as $r\uparrow1$, a contradiction.
\end{proof}

\begin{lemma}\label{lem-inverse}
	Almost surely
	\begin{equation}\label{eq-inverse-small}
		\lim_{\rho\uparrow1}
		\sup_{\substack{z\in\D_+\\|z|>\rho}}
		\frac{1-|z|}{|F'(z)|}=0.
	\end{equation}
\end{lemma}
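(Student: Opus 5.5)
The plan is to make the quantity explicit and reduce \eqref{eq-inverse-small} to an upper bound on the maximum of the log-correlated field $-\Re G$ near the unit circle. For $\theta=\pi$ we have $F'(z)=e^{a}e^{G(z)}(1-z^2)^{-1/2}$, so
\[
\frac{1-|z|}{|F'(z)|}=e^{-a}\,(1-|z|)\,|1-z^2|^{1/2}\,e^{-\Re G(z)} .
\]
The constant $e^{-a}$ is irrelevant. Since $G$ is centered Gaussian, $-\Re G\overset d=\Re G$. It therefore suffices to show that almost surely
\[
\sup_{z\in\D_+,\,|z|>\rho}(1-|z|)\,|1-z^2|^{1/2}e^{\Re G(z)}\to0 \qquad\text{as }\rho\uparrow1 .
\]

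\textbf{Step 1: covariance.} With $z=re^{i\theta}$, $\Var\Re G(z)=\sum_n r^{2n}\cos^2(n\theta)/n=\tfrac14\log\frac1{(1-r^2)^2}+\tfrac14\log\frac1{|1-r^2e^{2i\theta}|^2}$. Thus away from the real axis the variance is $\tfrac12\log\frac1{1-r}+O(1)$. Near $\pm1$ there is an extra term $\tfrac12\log\frac1{|1-z^2|}$, and this is exactly compensated by the deterministic factor $|1-z^2|^{1/2}$.

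\textbf{Step 2: discretization.} Decompose $\{\rho<|z|<1\}\cap\D_+$ into Whitney boxes $Q$ at dyadic scales $\varepsilon=2^{-k}$, with about $2^k$ boxes per scale. Write $\Re G=\Re G(z_Q)+(\text{oscillation})$ on each box. By Cauchy estimates, $(1-|z|)|G'(z)|$ is a Gaussian quantity with bounded variance. A union bound with Gaussian tails then shows that the oscillation over all boxes of scale $2^{-k}$ is $O(\sqrt k)$ almost surely for large $k$. This is acceptable only if the main estimate has slack at least $e^{C\sqrt k}$. A $\log k$ slack is not enough for this, so I would instead place the oscillation control inside the barrier argument of Step 3, via a sub-Gaussian bound on the sup over each box conditional on the coarse field, as in branching random walk arguments.

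\textbf{Step 3: maximum on a tree (main obstacle).} This is the heart of the proof, because the problem is exactly critical. The field has variance $\tfrac12\log 2^k$ over $2^k$ points. The naive first-moment bound therefore gives $\max\Re G\approx\log 2^k$, which only yields boundedness of $\varepsilon e^{\max}$, not convergence to zero. I need the log-correction: $\max_{|z|\approx1-2^{-k}}\Re G\le k\log2-c\log k$ eventually, almost surely.

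I would approximate $\Re G(z_Q)$ by the branching random walk $S_j(Q)$ along the ancestral chain of dyadic boxes, using truncations $\sum_{n<2^j}$. Consecutive increments are nearly independent, with variance $\tfrac12\log 2$ per generation, modulo the $O(1)$ errors handled as in Aïdékon–Hu–Shi \cite{ahs-discounted-tree-sums}. I would then run the standard first-moment computation with a barrier:
\begin{enumerate}[label=(\alph*)]
\item Show that no ancestral path ever exceeds $j\log2+A$. For this, sum over $j$ the expected number of first crossings, using Gaussian tails.
\item On that event, count boxes at generation $k$ with $S_k>k\log2-c\log k$ whose path stays below the barrier. A ballot estimate gives expected count $\lesssim 2^k\cdot 2^{-k}\,k^{-3/2}\,e^{O(c\log k)}\to0$ for small $c$. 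This is summable along $k=2^m$, and monotonicity interpolates between these values of $k$.
\end{enumerate}
Borel--Cantelli then gives $(1-|z|)e^{\Re G(z)}\le k^{-c'}\to0$ uniformly on generation $k$. I expect the main technical work here to be the decorrelation and Gaussian comparison between $\Re G$ and an exact branching random walk.

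\textbf{Step 4: the corners $\pm1$.} Near $z=\pm1$, rerun Step 3 in boxes at distance $d\approx|1-z^2|$ from $\pm1$. The variance gains $\tfrac12\log(1/d)$, while the count of boxes within distance $d$ is only about $d/\varepsilon$. The factor $d^{1/2}$ then makes the first moment at least as small as in the bulk, so summing over dyadic $d$ preserves convergence. Combining Steps 2--4 gives \eqref{eq-inverse-small} almost surely for every $a$.
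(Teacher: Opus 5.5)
Your route is genuinely different from the paper's. It is sound in outline, but it is a long detour around an input the paper already has, and its main step is not carried out.

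\textbf{The paper's proof.} It proves no new estimate. Since $-G$ also has law $\mu$, Proposition~\ref{prop-boundary-continuity} shows that $J(z)=\int_0^z e^{-G(w)}(1-w^2)^{-1/2}\dd w$ extends continuously to $\ol{\D_+}$. By Schwarz reflection it then extends to $\ol\D$ with a uniform modulus of continuity $\omega_J$. Cauchy's formula on the circle of radius $(1-|z|)/2$ about $z$ gives $(1-|z|)|J'(z)|\le 2\omega_J((1-|z|)/2)\to0$. The identity $1/F'=e^{-a}(1-z^2)J'$ together with $|1-z^2|\le 2$ finishes the proof. Your reduced statement, after the sign flip, follows the same way from continuity of $F_{\pi,G,0}$ itself, because $(1-|z|)|1-z^2|^{1/2}e^{\Re G(z)}\le 2(1-|z|)|F_{\pi,G,0}'(z)|$. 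The paper needs Proposition~\ref{prop-boundary-continuity} anyway (e.g.\ for Proposition~\ref{prop-all-time-bases}), so there the lemma comes essentially for free.

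\textbf{Your route.} You prove the needed maximum bound for the critical field directly (in the bulk, $\max_{\text{generation }k}\Re G-k\log 2\to-\infty$). You do this by a first moment with a barrier on a branching-random-walk approximation, and your criticality bookkeeping is right: the $2^k\cdot 2^{-k}$ cancellation and the ballot factor $k^{-3/2}$. This is the kind of tree machinery behind Proposition~\ref{prop-boundary-continuity}. However, you need only the maximum, not the path-integrated (discounted) sums that continuity of a primitive requires. So for this lemma your argument uses strictly less than the paper's input.

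The price is that the crux is only asserted. That crux is comparing $\Re G$ with an exact branching random walk and absorbing the $O(1)$-scale box oscillations into the barrier argument. One way to do this is to use that the last few dyadic frequency blocks of $\{G_n\}$ are independent of the coarse field.

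\textbf{Smaller points in the sketch.}
\begin{enumerate}
\item In (a), the first-crossing count is finite only through the many-to-one formula. The tilted walk $S_j-j\log2$ is centered, so the expected total number of first crossings is at most $e^{-2A}$. Bounding each generation by Gaussian tails alone gives $\sum_j j^{-1/2}e^{-2A}=\infty$.
\item In (b), there is no monotonicity in $k$ that would interpolate between the values $k=2^m$, but none is needed. The barrier count at threshold $k\log2-c\log k$ is $O(k^{2c-3/2}\log k)$, which is summable over all $k$ once $c<1/4$. Alternatively, use a fixed threshold $k\log 2-M$ and let $M\to\infty$.
\item Near $\pm1$, the factor $d^{1/2}$ only gives summability over dyadic $d$. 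Summability in $k$ still comes from a barrier, now adapted to a spine with per-generation variance about $\log 2$ and no branching down to scale $d$.
\end{enumerate}
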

\begin{proof}
	Because $-G$ has law $\mu$, Proposition~\ref{prop-boundary-continuity}
	implies that the primitive
	$J(z)=\int_0^ze^{-G(w)}(1-w^2)^{-1/2}\dd w$
	extends continuously to $\ol{\D_+}$ almost surely.
	Schwarz reflection extends $J$ holomorphically to $\D$
	and continuously to $\ol\D$.
	If $\omega_J$ is its uniform modulus of continuity, Cauchy's formula
	on the circle of radius $(1-|z|)/2$ about $z$, applied after subtracting
	$J(z)$, gives
	\[
	(1-|z|)|J'(z)|
	\le 2\omega_J((1-|z|)/2)\longrightarrow0
	\qquad\text{as }|z|\uparrow1.
	\]
	Now use
	$1/F'=e^{-a}(1-z^2)J'$ and $|1-z^2|\le2$.
\end{proof}

An embedded Euclidean $2\pi$ sector at a rough-boundary point $\zeta$
means an injective holomorphic $\psi:\varepsilon\D_+\to\D_+$ with
$\psi(iy)\to\zeta$ as $y\downarrow0$ and
$F(\psi(w))=F(\zeta)+e^{i\alpha}w^2$ for some $\varepsilon>0$ and
$\alpha\in\R$. Its developed image is a slit disk with tip $F(\zeta)$.

\begin{proposition}
	\label{prop-no-2pi}
	Almost surely the angle-$\pi$ JT sector has no embedded Euclidean $2\pi$ sector.
\end{proposition}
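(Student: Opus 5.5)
The plan is to argue by contradiction, using the developing map at the tip of the slit. Suppose $\psi:\varepsilon\D_+\to\D_+$ is an embedded Euclidean $2\pi$ sector at a rough-boundary point $\zeta$, with $F(\psi(w))=F(\zeta)+e^{i\alpha}w^2$. Then $\psi(w)$ tends to $\zeta$ as $w\to0$ in $\varepsilon\D_+$, not only along the imaginary axis: indeed $\psi(\varepsilon\D_+)\subset\D_+$ is a simply connected region accumulating at $\zeta$, and the image of the boundary segment $(-\varepsilon,\varepsilon)$ is a curve in $\ol{\D_+}$. Along it, $F$ traces the two sides of the slit, with developed length $w^2$.

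\textbf{Step 1.} I show that $\psi$ extends continuously to $(-\varepsilon,\varepsilon)$ and maps it into $\partial\D_+$. Since $F\circ\psi$ has bounded derivative near the real segment, $\psi$ has finite-length preimages of short developed segments. Lemma~\ref{lem-light-lifts} says flow-line segments of small developed length have small Euclidean diameter, and this gives equicontinuity and hence the extension. Maximality of the embedded region shows that interior points of $(-\varepsilon,\varepsilon)\setminus\{0\}$ land on $\partial\D_+$. If such a point landed in the interior of $\D_+$, $F$ would be a local isometry there and $\psi$ would extend across it; to dispose of this I would instead choose a maximal sector, or simply observe that boundary points of the image region which are interior to $\D_+$ can be absorbed. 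Points landing on the base $[-1,1]$ can only occur at a bounded number of locations near $\zeta$, because $\zeta$ is a rough-boundary point, so after shrinking $\varepsilon$ the images lie on the rough boundary.

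\textbf{Step 2.} From Step 1, the rough boundary near $\zeta$ contains an arc whose $F$-image lies in the straight segment $F(\zeta)+e^{i\alpha}[0,\varepsilon^2)$. This requires checking that $\psi((0,\varepsilon))$ is a nontrivial arc rather than a single point. That holds because $F$ is nonconstant on it, since its developed length is positive. Lemma~\ref{lem-no-shelf} then gives a contradiction almost surely.

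\textbf{Main obstacle.} The hard part is Step 1: ruling out that the boundary of the embedded region touches $\partial\D_+$ only in a sparse or fractal set, with the remainder of $\psi((0,\varepsilon))$ lying in the interior. I would handle this with Lemma~\ref{lem-inverse}. Near $\partial\D_+$, $|F'(z)|\gg 1-|z|$, so any interior point $z$ close to $\zeta$ has a developed neighborhood containing a Euclidean disk of radius comparable to $|F'(z)|(1-|z|)/4$ (Koebe). This is much larger than the Euclidean distance scale. The consequence is that a curve in $\D_+$ approaching $\zeta$ cannot remain an obstruction to extending $\psi$ while carrying only small developed length.

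Concretely, I would take $\psi$ maximal, meaning it cannot be extended to any larger injective domain $\varepsilon'\D_+$ with a slit image. Then every point of $\psi((-\varepsilon,\varepsilon))$ that lies in the interior of $\D_+$ would admit a local extension, and the monodromy of $F^{-1}$ along the slit edges is well defined. Hence the boundary set is contained in $\partial\D_+$.

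Finally, any remaining measure-zero or null-event subtleties, such as simultaneity over all $\zeta$, are handled by the fact that Lemmas~\ref{lem-no-shelf} and~\ref{lem-inverse} are already almost-sure statements holding uniformly in $\zeta$.
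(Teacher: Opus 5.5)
Your reduction to Lemma~\ref{lem-no-shelf} has a genuine gap in Step~1: nothing forces the slit edges $\psi((0,\varepsilon))$ and $\psi((-\varepsilon,0))$ to lie on $\partial\D_+$. The definition only requires $\psi$ to be injective into $\D_+$ with $F\circ\psi=F(\zeta)+e^{i\alpha}w^2$. The configurations the proposition must exclude include ones with interior slit edges.

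In the proof of Lemma~\ref{lem-actual-minimum}, the sector is the union of half-disk neighborhoods of $\zeta$ in $\ol{U_{t+}}$ and $\ol{V_{t-}}$. Its slit edges are the pieces of $\gamma_t^+$ and $\gamma_t^-$ beyond $\zeta$. These are boundary arcs of $U_{t+}$ and $V_{t-}$, but nothing places them on $\partial\D_+$. A priori they may run through $\D_+$, with further parts of the surface attached on their far sides.

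Your maximality device does not fix this. If a slit-edge point lies in $\D_+$, local invertibility of $F$ lets you continue $\psi$ across the real axis into $\{\Im w<0\}$. That is not an extension to a larger $\varepsilon'\D_+$, and its developed image overlaps the slit disk. So maximality is not contradicted, and the claim that the edges lie on $\partial\D_+$ does not follow. Lemma~\ref{lem-no-shelf} therefore only rules out the special ``shelf'' configuration. The Koebe heuristic in your last paragraph is not turned into an argument for the general case.

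The missing idea is to apply Lemma~\ref{lem-inverse} directly along the central ray $w=iy$, without ever locating the slit edges. This is the paper's argument.

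1. For fixed $0<h<\varepsilon/2$, the map $u\mapsto\psi(ih(1+u))$ is holomorphic from $\D$ into $\D_+\subset\D$. Schwarz--Pick then gives $1-|\psi(iy)|\ge cy$ for $0<y<h$.
2. Differentiating the defining identity gives
\[
|\psi'(iy)|=\frac{2y}{|F'(\psi(iy))|}\le\frac2c\,\frac{1-|\psi(iy)|}{|F'(\psi(iy))|}.
\]
This tends to $0$ by Lemma~\ref{lem-inverse}, since $\psi(iy)\to\zeta$ with $|\zeta|=1$.
3. Integrating along the imaginary axis gives $cy\le 1-|\psi(iy)|\le|\psi(iy)-\zeta|=o(y)$, a contradiction.

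Heuristically, a $2\pi$ tip forces $F'$ to vanish linearly in the conformal distance to $\zeta$, and Lemma~\ref{lem-inverse} forbids exactly that. This route does not need Lemma~\ref{lem-no-shelf} at all.
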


\begin{proof}
	Work on the probability-one event of Lemma~\ref{lem-inverse}.
	Suppose that $\psi:\varepsilon\D_+\to\D_+$ defines an
	embedded Euclidean $2\pi$ sector at a rough-boundary point
	$\zeta$, so that
	\[
	F(\psi(w))=F(\zeta)+e^{i\alpha}w^2,
	\qquad \psi(iy)\to\zeta \quad\text{as }y\downarrow0.
	\]
	Applying Schwarz--Pick to
	$u\mapsto\psi(ih(1+u))$, for fixed $0<h<\varepsilon/2$,
	gives $1-|\psi(iy)|\ge cy$ for some $c>0$ and all
	$0<y<h$. Differentiating the displayed identity and using
	\eqref{eq-inverse-small}, we obtain
	\[
	|\psi'(iy)|
	=\frac{2y}{|F'(\psi(iy))|}
	\le\frac2c\,
	\frac{1-|\psi(iy)|}{|F'(\psi(iy))|}
	\longrightarrow0.
	\]
	Integrating along the imaginary axis now gives
	\[
	cy\le1-|\psi(iy)|
	\le|\psi(iy)-\zeta|
	\le y\sup_{0<s\le y}|\psi'(is)|
	=o(y),
	\]
	a contradiction.
\end{proof}

\subsection{Horizontal chords at jump times}
\label{subsec-chords-at-jumps}

Work on the common full-measure event of
Lemma~\ref{lem-tagged-paths}, Proposition~\ref{prop-all-time-bases}, Lemma~\ref{lem-no-shelf} and Proposition~\ref{prop-no-2pi}.
Reparametrize $\gamma_t^-$ and $\gamma_t^+$ on $[-L_{t-},R_{t-}]$ and $[-L_t,R_t]$, respectively,
so that $F(\gamma_t^\pm(x))=x+it$.

\begin{proof}[Proof of Lemma~\ref{lem-actual-minimum}]
We have 
	$\gamma_t\subset\gamma_t^-\cap\gamma_t^+$, so $\gamma_t$ has left and right
	lengths bounded above by $
	\min(L_{t-},L_t)$
	and $\min(R_{t-},R_t)$.
	
If this inequality were strict on either side, the endpoint
$\zeta$ of $\gamma_t$ on that side would lie in the relative
interiors of both limiting arcs.
Since $\operatorname{Im}F(\zeta)=t>0$ and $F$ is real on the
original base, $\zeta$ lies in $\partial \D_+ \backslash [-1,1]$.
By Proposition~\ref{prop-all-time-bases}, a neighborhood of $\zeta$ in
$\ol{U_{t+}}$ is mapped by $F$ to an upper half-disk,
while a neighborhood in $\ol{V_{t-}}$ is sent to a lower
half-disk, both centered at $F(\zeta)$.
Their interiors are disjoint because $U_{t+}\cap V_{t-}=\varnothing$.
Joining these neighborhoods along a segment of $\gamma_t$
gives an embedded disk slit along a radius, hence a Euclidean
$2\pi$ sector at $\zeta$, contradicting Proposition~\ref{prop-no-2pi}.
Therefore $\gamma_t$ has left and right
lengths equal to $
\min(L_{t-},L_t)$
and $\min(R_{t-},R_t)$.
\end{proof}

Note that although we stated Proposition~\ref{prop-binary-geometric} before Section~\ref{subsec-framed-disk-symmetries}, the proofs in Section~\ref{subsec-framed-disk-symmetries} do not depend on Proposition~\ref{prop-binary-geometric}, so we can use all results from that section in the following proof. 

\begin{proof}[Proof of Proposition~\ref{prop-binary-geometric}]
	We rule out additional boundary contacts by showing that they
	would force simultaneous jumps of both length coordinates in
	a suitably rerooted exploration.
	
	Work under $\diskone$ on the common full-measure event supplied
	by Lemma~\ref{lem-reroot-null}, where all upward and downward
	explorations from defined rational rerootings satisfy
	Lemma~\ref{lem-tagged-paths},
	Proposition~\ref{prop-all-time-bases}, and
	Lemma~\ref{lem-actual-minimum}, and the boundary satisfies
	Lemma~\ref{lem-no-shelf}.
	
	Without loss of generality fix a jump time $t$ with $\Delta R_t\ne0$; 
	by Lemma~\ref{lem-tagged-paths}, $\Delta L_t=0$, so
	Lemma~\ref{lem-actual-minimum} implies that $\gamma_t$ is the shorter of $\gamma_t^\pm$. Let $\gamma$ denote the longer arc and set
	$\gamma_t^\Delta=\ol{\gamma\setminus\gamma_t}$.
	We need to show that $\gamma_t^\Delta$ meets
	$\partial\D_+\setminus[-1,1]$ only at its endpoints.
	
Let $\zeta$ be the common endpoint of $\gamma_t$ and
$\gamma_t^\Delta$. Suppose that, away from its endpoints, $\gamma_t^\Delta$
contains a point $\xi \in \partial\D_+ \backslash [-1,1]$.
By Lemma~\ref{lem-no-shelf}, the open subarc of
$\gamma_t^\Delta$ between $\zeta$ and $\xi$ meets $\D_+$, so there exists a horizontal chord $\eta$ lying in the relative interior of $\gamma$. 

Suppose first that $\Delta R_t>0$, so $\gamma=\gamma_t^+$.
The flat half-disk neighborhoods in $U_{t+}$ along $\gamma$
allow us to choose a rational rerooting at a point $p'$
just above the interior of $\eta$.
Let $(\wh p_s)$ be the downward vertical flow line
from $p'$, with length process $(\wh L_s,\wh R_s)$
and the limits of horizontal chords
$\wh\gamma_s^\pm$ from above and below.
Let $\sigma$ be the time it reaches $\eta$, so that $\eta$
is the horizontal chord through $\wh p_\sigma$. As $\eta$ lies in the relative interior
of $\gamma$, the flat neighborhoods above $\gamma$ imply 
that $\wh\gamma_\sigma^-$ extends strictly beyond
both endpoints of $\eta$.
Lemma~\ref{lem-actual-minimum} then gives
$\wh\gamma_\sigma^+=\eta$, so $\wh L$ and
$\wh R$ both jump downward at $\sigma$, contradicting
Lemma~\ref{lem-tagged-paths}.

If $\Delta R_t<0$, then $\gamma=\gamma_t^-$. A similar argument gives, after rerooting, a simultaneous downward jump of the coordinates of the length process, again contradicting
Lemma~\ref{lem-tagged-paths}. 
	
	Consequently, $\gamma_t^\Delta$   intersects $\partial \D_+ \backslash [-1,1]$ only at its endpoints, and has length $|\Delta R_t|$.
	Let $D_t$ be the connected component of
	$\D_+\setminus\gamma_t^\Delta$ not containing $p_t$.
	When $\Delta R_t>0$, the component containing $p_t$ contains
	$U_{t+}$, which develops locally above $\gamma_t^\Delta$;
	hence $D_t$ is downward.
	When $\Delta R_t<0$, it contains $V_{t-}$, which develops
	locally below $\gamma_t^\Delta$; hence $D_t$ is upward.
\end{proof}

\bibliographystyle{hmralphaabbrv}
\bibliography{bib}

\end{document}